\numberwithin{equation}{section}
\theoremstyle{plain} 
\newtheorem{theorem}[equation]{Theorem}
\newtheorem{lemma}[equation]{Lemma}
\newtheorem{proposition}[equation]{Proposition}
\newtheorem{corollary}[equation]{Corollary}
\theoremstyle{definition}
\newtheorem{defn}[equation]{Definition}
\newtheorem{definition}[equation]{Definition}
\newtheorem{remark}[equation]{Remark}
\newtheorem*{remark*}{Remark}
\newtheorem*{remarks*}{Remarks}
\newtheorem{example}[equation]{Example}
\theoremstyle{plain}
\newtheorem*{maintheorem}
       {Theorem~\ref{theorem: main theorem}}
\newcommand{\maintheoremtext}
{
Fix a prime $p$. Let $\coeff$ be a Mackey functor for
$\Sigma_{n}$ that takes values in $\integers_{(p)}$-modules. Assume
the following. 
\begin{enumerate}
\item The Mackey functor $\coeff$ is projective relative to the
  collection of $p$-subgroups of $\Sigma_n$.
\item \label{condition: centralizer} For every elementary abelian
  $p$-subgroup $D\subset \Sigma_n$ that acts freely and
  non-transitively on $\{1, \ldots, n\}$, the kernel of the
  homomorphism $\Cen_{\Sigma_n}(D)\rightarrow
  {\pi_{0}\Cen_{\GL_n\reals}(D)}$ acts trivially on
  $\coeff(\Sigma_n/D)$.
\item \label{condition: involution} If $p$ is odd and $D$ is as above, then every odd involution in $\Cen_{\Sigma_n}(D)$ acts on $\coeff(\Sigma_n/D)$ by multiplication by $-1$.
\end{enumerate}
Then if $n$ is not a power of $p$, the groups
$\BredonHomRed{*}{\Sigma_{n}}{\Pcal_n^{\diamond}}{\coeff}$
and  
$\BredonCohRed{*}{\Sigma_{n}}{\Pcal_n^{\diamond}}{\coeff}$
vanish.
If $n=p^k$, then the map
\[
{\Sigma_n}_+ \wedge_{\Aff_k}(E\GL_k{}_+\wedge B_k^\diamond 
          ) \longrightarrow \Pcal_n^\diamond
\]
induces an isomorphism on $\BredonHomRed{*}{\Sigma_n}{-}{\coeff}$ 
and on $\BredonCohRed{*}{\Sigma_n}{-}{\coeff}$. 
}
\newtheorem*{onlyonedimensiontheorem}
{Corollary~\ref{corollary:only-one-dimension}}
\newcommand{\onlyonedimensiontext} 
{ In the setting of
  Theorem~\ref{theorem: main theorem}, suppose that $n=p^k$.  Then
there are isomorphisms
\[
\BredonHomRed{j}{\Sigma_{n}}{\Pcal_n^{\diamond}}{\coeff}
      \cong \left\{ \begin{array}{cl}
      0 & j\ne k-1 \\
     \coeff(\Sigma_n/\Delta_k) \otimes_R \Stein_k  & j=k-1 \end{array}\right.
\]      
Moreover, there are isomorphisms for all $j$ between Bredon homology and cohomology groups: 
$\BredonHomRed{j}{\Sigma_{n}}{\Pcal_n^{\diamond}}{\coeff}\cong \BredonCohRed{j}{\Sigma_{n}}{\Pcal_n^{\diamond}}{\coeff} $ for all $j\ge 0$.
}
\newcommand{\pruningcriteriontext} { 
  Let $\mu$ be a homological (or cohomological) coefficient system
  for $G$. Let $\Ccal$ be a collection of $p$-subgroups of $G$ that is
  closed under passage to $p$-supergroups.  Suppose that for each
  $K\in\Iso(X)$ and each $p$-subgroup $D\subset K$ with
  $D\notin\Ccal$, the collection of non-identity $p$-subgroups of
  $W_K(D)$ is homologically (or cohomologically) $\mu(G/D)$-ample.
  Then the map $X_{\Ccal}\to X_{\pgroupsnone}$ induces an isomorphism on
  Bredon homology (or cohomology) with coefficients in~$\mu$.  }
\newtheorem*{takeawayone}{Lemma~\ref{lem:take-away-one}}
\newcommand{\takeawayonetext} { 
  Let $\mu$ be a homological (or cohomological) coefficient system for
  $G$. Let $D$ be a subgroup of $G$, and let $\Dcal$ denote the set of
  conjugates of $D$ in $G$.  Let $\Ccal$ be a collection of
  $p$-subgroups of $G$ for which $D$ is a minimal element, and such
  that $\Ccal$ contains all $p$-supergroups of $D$.  Let
  $W=W_G(D)$. Then $(*)_{\Ccal\setminus \Dcal}\to(*)_{\Ccal}$ gives
  isomorphisms on Bredon homology (resp. cohomology) with coefficients
  in $\mu$ if and only if $\pgroupsntof{W}$ is homologically
  (resp. cohomologically) $\mu(G/D)$-ample.  }
\newcommand{\algebraicpruningtext}
{ Assume that $W$ is a finite group and that 
$M$ is a $\integers_{(p)}[W]$-module, and suppose that 
  there exists an element of order $p$ in $W$ that acts trivially on~$M$.
  Then $\pgroupsntof{W}$ is homologically and cohomologically $M$-ample.
}
\newtheorem*{badsubgroups}
       {Proposition~\ref{proposition: bad subgroups}}
\newcommand{\badsubgroupstext}
{
  Let $H$ be a problematic $p$-subgroup of~$\Sigma_{n}$. 
  Then $H$ is an elementary abelian $p$-group, and the action of $H$ on
  $\n$ is free.  }
\newtheorem*{allpgroups}
       {Proposition~\ref{proposition: family}}
\newcommand{\allpgroupstext}
{
  Let $\family$ be a family of subgroups of~$G$, let $X$ be a $G$-space, 
  and let $\coeff$ be a Mackey functor for $G$ that is projective relative 
  to~$\family$. Then $X_{\family}\rightarrow X$ induces an isomorphism 
  on $\BredonHom{*}{G}{-}{M}$ and~$\BredonCoh{*}{G}{-}{M}$.
}
\newtheorem*{VariableNoNum}{{\VariableText}}
\newtheorem{Variable}[equation]{{\VariableText}}
\theoremstyle{definition}
\newtheorem*{VariableNoNumBold}{{\VariableText}}
\newtheorem{VariableBold}[equation]{{\VariableText}}
\theoremstyle{definition}
\newlength{\asidelength}
\def\Changed/{\ifvmode\else\vadjust{%
\vbox to 0pt{\vskip -\baselineskip%
\hbox to 0pt{\hss\vrule height 0pt depth 1.2\baselineskip\hskip 1em}\vss}}\fi}
\def\CHanged{\ifvmode\else\vadjust{%
\vbox to 0pt{\vskip -\baselineskip%
\hbox to 0pt{\hss\vrule height 0pt depth 1.2\baselineskip\hskip 1em}\vss}}\fi}
\def\Math#1{\def\MathString{#1}\futurelet\MathDelim\MathChoose}
\def\MathChoose{\ifmmode\let\MathDo\MathString%
              \else\let\MathDo\MathSkip\fi%
              \MathDo}
\def\MathSkip{\ifx\MathDelim/\def\MathDo{$\MathString$\EatOne}%
              \else\def\MathDo{$\MathString$}\fi%
              \MathDo}
\def\Text#1{\def\TextString{#1}\futurelet\TextDelim\TextSkip}
\def\TextSkip{\ifx\TextDelim/\def\TextDo{\TextString\EatOne}%
              \else\let\TextDo\TextString\fi%
              \TextDo}
\def\EatOne#1{}
\def\SkipToEndScan#1\EndScan{}
\def\Scan#1#2#3{\ifx#1#2#3\expandafter\SkipToEndScan\fi\Scan#1}
\def\Upper#1{%
\Scan#1aAbBcCdDeEfFgGhHiIjJkKlLmMnNoOpPqQrRsStTuUvVwWxXyYzZ#1#1\EndScan}
\def\Phrase#1 #2/#3/#4=#5 #6/#7/#8.{%
\expandafter\edef\csname#2#3\endcsname{\noexpand\Text{#6#7}}
\expandafter\edef\csname\Upper#2#3\endcsname{\noexpand\Text{\Upper#6#7}}
\expandafter\edef\csname#1#2#3\endcsname{\noexpand\Text{#5 #6#7}}
\expandafter\edef\csname\Upper#1#2#3\endcsname{\noexpand\Text{\Upper#5 #6#7}}
\expandafter\edef\csname#2#4\endcsname{\noexpand\Text{#6#8}}
\expandafter\edef\csname\Upper#2#4\endcsname{\noexpand\Text{\Upper#6#8}}
}
\newcommand{\orb}[1]{{#1\kern-2pt}{\scriptscriptstyle\rm{-orb}}}
\newcommand{\orbit}[2]{#1/#2}
\newcommand{\cclass}{\cbold}
\newcommand{\orbits}[2]{#1/#2} 
\newcommand{\orbitsprime}{\mu} 
\newcommand{\GL}{\operatorname{GL}}
\newcommand{\Hom}{\operatorname{Hom}}
\newcommand{\hhh}{\operatorname{h}\!}
\newcommand{\thhh}{\tilde{\operatorname{h}}}
\def\HomotopyOrbit#1on#2/{\ensuremath{#2_{\hhh#1}}}
\def\RedHomotopyOrbit#1on#2/{\ensuremath{#2_{\thhh#1}}}
\newcommand{\coeff}{M}
\newcommand{\BredonHom}[4]{H_{#1}^{#2}\left(#3; #4\right)}
\newcommand{\BredonCoh}[4]{H^{#1}_{#2}\left(#3; #4\right)}
\newcommand{\BredonHomRed}[4]{\Hwiggle_{#1}^{#2}\left(#3; #4\right)}
\newcommand{\BredonCohRed}[4]{\Hwiggle^{#1}_{#2}\left(#3; #4\right)}
\DeclareMathOperator{\Aff}{Aff}
\DeclareMathOperator{\Aut}{Aut}
\newcommand{\Cen}{C}
\DeclareMathOperator{\cl}{cl}
\DeclareMathOperator{\Diag}{Diag}
\DeclareMathOperator{\im}{im}
\DeclareMathOperator{\Iso}{Iso}
\DeclareMathOperator{\map}{map}
\DeclareMathOperator{\Stein}{St}
\DeclareMathOperator{\Tor}{Tor}
\def\doCal#1{%
\ifx#1\doAllCalEnd\def\doAllCal{\relax}\else%
 \expandafter\edef\csname#1cal\endcsname{{\noexpand\mathcal #1}}\fi}
\def\doAllCal#1{\doCal#1\doAllCal}
\def\doBar#1{%
\ifx#1\doAllBarEnd\def\doAllBar{\relax}\else%
 \expandafter\edef\csname#1bar\endcsname{{\noexpand\overline{#1}}}\fi}
\def\doAllBar#1{\doBar#1\doAllBar}
\newcommand{\taubar}{{\overline{\tau}}}
\newcommand{\lambdabar}{{\overline{\lambda}}}
\newcommand{\Lambdabar}{{\overline{\Lambda}}}
\def\doWiggle#1{%
\ifx#1\doAllWiggleEnd\def\doAllWiggle{\relax}\else%
 \expandafter\edef\csname#1wiggle\endcsname{{\noexpand\tilde{#1}}}\fi}
\def\doAllWiggle#1{\doWiggle#1\doAllWiggle}
\newcommand{\epi}{\twoheadrightarrow}
\newcommand{\class}[1]{\cl\left(#1\right)}
\newcommand{\definedas}{\mathrel{\vcenter{\baselineskip0.5ex \lineskiplimit0pt
                     \hbox{\scriptsize.}\hbox{\scriptsize.}}}%
                     =}
\newcommand{\defining}[1]{{\emph{#1}}}
\newcommand{\ho}{\text{h}}
\newcommand{\hobased}{{\tilde{\text{h}}}}
\newcommand{\colim}{\operatorname{colim}\,}
\newcommand{\cbold}{ {\mathbf{c}} }
\newcommand{\G}{ {\bf{G}} }
\newcommand{\n}{ {\mathbf{n}} }
\renewcommand{\r}{{\mathbf{r}}}
\newcommand{\s}{{\mathbf{s}}}
\newcommand{\isogroupof}[1]{K_{#1}}
\newcommand{\family}{\Fcal}
\newcommand{\pgroupsntof}[1]{\Scal_{p}\kern-1pt\left(#1\right)}
\newcommand{\pgroupsof}[1]{{\overline{\Scal}_{p}}\kern-1pt\left(#1\right)}
\newcommand{\pgroupsntnone}{\Scal_{p}}
\newcommand{\pgroupsnone}{{\overline{\Scal}_{p}}}
\newcommand{\restrict}{|}
\newcommand{\reals}{{\mathbb{R}}}
\newcommand{\integers}{{\mathbb{Z}}}
\newcommand{\field}{{\mathbb{F}}}
\newcommand{\downto}{\downarrow}
\newcommand{\myker}{C_0}
\newcommand{\mykerbar}{\Cbar_0}
\newcommand{\mykernel}{J}
\begin{document}
\title[Bredon homology of partition complexes]
{Bredon homology of partition complexes}

\author{G. Z. Arone}
\address{Kerchof Hall, U. of Virginia, P.O. Box 400137,
         Charlottesville VA 22904 USA}
\email{zga2m@virginia.edu}
\author{W. G. Dwyer}
\address{Department of Mathematics, University of Notre Dame,
             Notre Dame, IN 46556 USA}
\email{dwyer.1@nd.edu}
\author{K. Lesh}
\address{Department of Mathematics, Union College, Schenectady, NY 12309 USA}
\email{leshk@union.edu}

\thanks{The authors were partially supported by NSF grants
DMS-0968221, DMS-0967061, and DMS-0968251.}

\date{\today}

\maketitle


\begin{abstract} 
  We prove that the Bredon homology or cohomology of the partition
  complex with fairly general coefficients is either trivial or
  computable in terms of constructions with the Steinberg module. The
  argument involves a theory of Bredon homology and cohomology approximation.
\end{abstract}

\section{Introduction}

In this paper, we study the poset obtained by ordering the partitions
of the set $\n\definedas\{1,\ldots, n\}$ by coarsening.  The partition
of $\n$ into one-element sets is called the \defining{discrete}, or
\defining{trivial} partition.  The partition consisting of the set
$\n$ itself is called \defining{indiscrete}, and partitions of $\n$
that are not indiscrete are called \defining{proper}.  With this
terminology, let $\Pcal_{n}$ denote the nerve of the poset of proper
nontrivial partitions of $\n$, and let $\Pcal_{n}^\diamond$ denote its
unreduced suspension. This space, with its natural action of the
symmetric group $\Sigma_n$, arises in various contexts, and in
particular it plays a role in the calculus of functors. We study the
Bredon homology and cohomology of $\Pcal_n^\diamond$ as a
$\Sigma_{n}$-space.

For the moment, we focus on homology. Let $G$ be a finite group and
$X$ a $G$-space, or a simplicial $G$-set. We denote the Borel
construction on $X$ by $X_{\ho G}\definedas(EG\times X)/G$. If $X$ has
a pointed $G$-action, i.e., $X$ has a basepoint that is fixed by the
$G$-action, we denote the reduced Borel construction by 
$X_{\hobased G}\definedas \left(EG_{+}\wedge X\right)/G$.  One type of
equivariant homology for $X$ is the ordinary twisted homology of the
Borel construction $X_{\ho G}$ with coefficients in a $G$-module~$M$
or, if $X$ has a pointed action, the ordinary twisted homology of
$X_{\hobased G}$. The \defining{Bredon homology} of $X$ is a finer
invariant, which takes coefficients in an additive functor~$\gamma$
from finite $G$-sets to abelian groups.  Our goal in this paper, in
rough terms, is to sharpen the results of~\cite{A-D} about the mod~$p$
homology of the Borel construction on $\Pcal_n^\diamond$ by proving
similar results about the Bredon homology of~$\Pcal_{n}^{\diamond}$
with somewhat general coefficients.

To carry out this program, we require a rather detailed analysis of the
fixed point spaces of various $p$-subgroups of $\Sigma_n$ acting 
on~$\Pcal_{n}$. In particular, we completely classify the
$p$-subgroups whose fixed point spaces on $\Pcal_{n}$ are not
contractible (Proposition~\ref{proposition: bad subgroups}). We also
need to study equivariant approximations that induce an isomorphism on
Bredon homology. Here we build on earlier work of several people, in
particular Webb and the second author of the present paper
\cite{WebbSplit, Webb, Homology, Sharp, A-D}.

We introduce some further terminology and then describe the results about
$\Pcal_{n}$ in more detail. A \defining{Mackey functor} $M$ for $G$ is
a pair of additive functors $(\gamma,\gamma^\natural)$ from finite
$G$-sets to abelian groups, where $\gamma$ is covariant and
$\gamma^\natural$ is contravariant, and $\gamma$ and
$\gamma^{\natural}$ take common values and satisfy certain conditions
(Definition~\ref{definition: Mackey functor}).  Hence $\gamma$ can
serve as a coefficient system for Bredon homology of $G$-spaces, and
$\gamma^\natural$ as a coefficient system for Bredon cohomology.  We
denote the resulting homology and cohomology groups of a $G$-space $X$
by $\BredonHom{*}{G}{X}{\coeff}$ and $\BredonCoh{*}{G}{X}{\coeff}$.
There are also reduced versions of Bredon homology and cohomology,
defined for spaces with pointed $G$-actions. They are denoted by
$\BredonHomRed{*}{G}{X}{\coeff}$ and $\BredonCohRed{*}{G}{X}{\coeff}$,
respectively (Section~\ref{section: background on Bredon homology}).

The notion of a Mackey functor that is projective relative to $p$-subgroups is
important for our main theorem, and it is reviewed in
Section~\ref{section: mackey functors}. In brief, it amounts to the
following. Let $M$ be a Mackey functor for $G$ and let $P\subset G$ be
a $p$-Sylow subgroup. There is a natural transformation of Mackey
functors $M(G/P \times -)\longrightarrow M(-)$ induced by projection.
We say $M$ is \defining{projective relative to $p$-subgroups} if this
transformation is a split epimorphism.

We mentioned earlier that we use an approximation of $\Pcal_{n}$ 
to compute Bredon homology and cohomology, which we refer
to jointly as Bredon (co)homology, for brevity. 
In~\cite{A-D}, the authors approximate the $\Sigma_{n}$-space
$\Pcal_{n}$ when $n$ is a prime power by inducing up from a
Tits building. We adapt that procedure for this work, as described
in the next two paragraphs.  Let
$p$ be a prime and suppose that $n=p^{k}$ for some positive integer
$k$. Let $B_k$ be the nerve of the poset of proper, nontrivial
subgroups of the group 
$\Delta_k\definedas \left(\integers/p\integers\right)^k$, ordered by
inclusion.  The complex $B_k$ is the Tits building for
$\GL_k\definedas\GL_k\left(\field_p\right)$.  
Since $n=p^k$, we can identify the underlying set of $\Delta_k$,
which has $p^k$ elements, with
$\n=\{1,\ldots,p^k\}$. We can then construct a pointed map 
$B_k\longrightarrow \Pcal_n$ by assigning to a subgroup $V\subseteq\Delta_k$,
the partition of $\n$ given by the cosets of $V$ in $\Delta_k$.
Writing $B_k^\diamond$ for the unreduced suspension of $B_{k}$, we 
obtain a map $B_{k}^\diamond\rightarrow\Pcal_{n}^{\diamond}$.

The action of $\Delta_k$ on its underlying set by left translation, and
the identification $\Delta_k\leftrightarrow\n$, allow us to identify
$\Delta_k$ as a subgroup of $\Sigma_n$.  The normalizer of
$\Delta_k$ in $\Sigma_n$ is isomorphic to the affine group
$\Aff_k \cong \Delta_{k}\rtimes \GL_{k}$, which then acts on $B_{k}$
(with $\Delta_{k}$ acting trivially). The $\Aff_{k}$-equivariant map 
$B_k^\diamond\to \Pcal_n^\diamond$ extends to a $\Sigma_n$-equivariant map
$
{\Sigma_n}_+ \wedge_{\Aff_k}(E\GL_k{}_+\wedge B_k^\diamond 
          ) \longrightarrow \Pcal_n^\diamond
$,
which turns out to be a good enough approximation to  
$\Pcal_n^\diamond$ to compute Bredon homology, as stated in
Theorem~\ref{theorem: main theorem}, below.

The following is our main theorem. If $H$ is a subgroup of $G$, then 
$\Cen_G(H)$ denotes the centralizer of $H$ in $G$. Note that $\Cen_G(H)$ 
acts on $G/H$ by $G$-equivariant maps, so if $M$ is a Mackey
functor for~$G$, there is an action of $\Cen_G(H)$ on $M(G/H)$.

\begin{theorem} \label{theorem: main theorem}
\maintheoremtext
\end{theorem}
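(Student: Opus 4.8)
The plan is to replace $\Pcal_n^\diamond$ by a sequence of simpler $\Sigma_n$-spaces, each time along a map that is a $\Gamma^*_*$-isomorphism, and then to identify what remains. \textbf{First} I would use Proposition~\ref{proposition: all p-groups}, applied to the collection $\pgroups$ of all $p$-subgroups of $\Sigma_n$: since $\Gamma$ is $p$-constrained, the approximation map $(\Pcal_n^\diamond)_{\pgroups}\to\Pcal_n^\diamond$ is a $\Gamma^*_*$-isomorphism, so it suffices to work with $(\Pcal_n^\diamond)_{\pgroups}$. This space is built from the fixed-point spaces $(\Pcal_n^\diamond)^D=\bigl((\Pcal_n)^D\bigr)^\diamond$, and Proposition~\ref{proposition: bad subgroups} says $(\Pcal_n)^D$ is contractible --- hence $(\Pcal_n^\diamond)^D$ has trivial reduced Bredon homology --- unless $D$ is elementary abelian and acts freely on~$\n$.

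\textbf{Next} I would prune the collection of $p$-subgroups using Proposition~\ref{proposition: pruning criterion} and the algebraic criterion of Proposition~\ref{proposition: algebraic pruning criterion}: a $p$-subgroup $D$ occurring in some $K\in\Iso(\Pcal_n^\diamond)$ is $\Gamma(\Sigma_n/D)$-\superfluous\ in $K$ as soon as its Weyl group carries an element of order $p$ acting trivially on $M=\Gamma(\Sigma_n/D)$, and I would produce such an element case by case. If $D$ is elementary abelian and acts freely on $\n$ with $m=n/|D|\ge 2$ orbits, then $C_{\Sigma_n}(D)\cong D\wr\Sigma_m$, so $C_{\Sigma_n}(D)/D$ is a nontrivial $p$-group and the \emph{centralizer condition} on $\Gamma$ forces its elements to act trivially on $M$. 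If instead $(\Pcal_n)^D$ is contractible --- by the previous paragraph the only other possibility --- then $D$ either fixes at least two points of $\n$, so that $C_{\Sigma_n}(D)$ contains a transposition (an order-$p$ element when $p=2$, and handled for odd $p$ by the \emph{involution condition} together with the invertibility of $2$ in $\integers_{(p)}$), or sits inside an elementary abelian overgroup to which the preceding case applies. When $n$ is \emph{not} a power of $p$, every elementary abelian subgroup acting freely on $\n$ has $m\ge 2$ orbits, so all $p$-subgroups become prunable and one may prune down to the empty collection; since the empty approximation of a pointed space is its basepoint, $\wgdHreddown*(\Pcal_n^\diamond;\Gamma)$ and $\wgdHredup*(\Pcal_n^\diamond;\Gamma)$ vanish.

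\textbf{When $n=p^k$}, the elementary abelian subgroups acting freely with a single orbit are exactly the conjugates of $\Delta_k$, and $C_{\Sigma_n}(\Delta_k)=\Delta_k$, so the pruning above cannot remove them. I would instead prune to the collection $\Ccal$ of $p$-subgroups of $\Sigma_n$ containing a conjugate of $\Delta_k$ --- which is closed under passage to $p$-supergroups --- obtaining a $\Gamma^*_*$-isomorphism $(\Pcal_n^\diamond)_{\Ccal}\to(\Pcal_n^\diamond)_{\pgroups}$. A subgroup in $\Ccal$ strictly containing a conjugate of $\Delta_k$ is transitive on $\n$ with nontrivial point stabilizer, hence does not act freely, so its fixed-point space is contractible and the corresponding terms drop out; what is left is the contribution of the single conjugacy class of $\Delta_k$. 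The orbit category on that class is equivalent to $B\GL_k$ (with $N_{\Sigma_n}(\Delta_k)=\Aff_k$ and $\Aff_k/\Delta_k=\GL_k$), and the value of the fixed-point functor there is $(\Pcal_n^\diamond)^{\Delta_k}=\bigl((\Pcal_n)^{\Delta_k}\bigr)^\diamond$, which via coset partitions is $\GL_k$-equivariantly isomorphic to the suspended Tits building $B_k^\diamond$. Unwinding the homotopy colimit over this category and inducing up to $\Sigma_n$ identifies $(\Pcal_n^\diamond)_{\Ccal}$ with ${\Sigma_n}_+\wedge_{\Aff_k}(E\GL_k{}_+\wedge B_k^\diamond)$, and the composite of all the approximation maps is precisely the map in the statement, which is therefore a $\Gamma^*_*$-isomorphism.

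The heart of the proof --- and the step I expect to be hardest --- is the pruning: one must check, for each pair $(D,K)$ meant to be prunable, that the hypothesis of Proposition~\ref{proposition: algebraic pruning criterion} really holds, which demands a careful analysis of centralizers and Weyl groups of $p$-subgroups of $\Sigma_n$ matched precisely against the centralizer and involution hypotheses on $\Gamma$; the odd-primary case, where transpositions are not order-$p$ elements, is the most delicate. A secondary technical point is the bookkeeping that extracts the factor $E\GL_k{}_+$ from the homotopy colimit over $B\GL_k$.
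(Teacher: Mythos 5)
Your overall skeleton (pass to $(\Pcal_n)_{\pgroups}$ via Proposition~\ref{proposition: all p-groups}, prune, then identify the $\Delta_k$-contribution with the Tits building) matches the paper, but the pruning step --- which you rightly call the heart of the argument --- is not executable as you describe it. The fatal point is the target collection. You propose to prune down to the \emph{empty} collection when $n\neq p^k$, and to the collection of subgroups containing a conjugate of $\Delta_k$ when $n=p^k$. Proposition~\ref{proposition: pruning criterion} only lets you discard a subgroup $D$ of $K\in\Iso(X)$ when the map $L_{hW}\to BW$ (with $W=N_K(D)/D$ and $L$ the poset of nontrivial $p$-subgroups of $W$) is an $M$-homology isomorphism; if $D$ is, say, a Sylow $p$-subgroup of $K$ that is its own normalizer, then $L=\emptyset$ and $BW=\ast$, and the condition fails for any $M\neq 0$. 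So most $p$-subgroups are simply not prunable, and no choice of hypotheses on $\Gamma$ will make them so. The paper prunes \emph{only} the elementary abelian subgroups acting freely and nontransitively on $\n$, keeps everything else in the collection $\Ccal$, and then disposes of the remaining subgroups not by pruning but by the comparison $X_{\Ccal}\to(\ast)_{\Ccal}$, which is a $\Sigma_n$-equivalence because the retained subgroups have contractible fixed points on $\Pcal_n$ (Proposition~\ref{proposition: bad subgroups}); the reduced groups of $\Pcal_n^\diamond$ are exactly the relative groups of $\Pcal_n\to\ast$. You conflate ``has contractible fixed points'' with ``is prunable''; these play entirely different roles.

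The verification of prunability you sketch is also wrong in its specifics. For $D$ elementary abelian acting freely with $m\ge 2$ orbits, $\Cen_{\Sigma_n}(D)\cong D\wr\Sigma_m$, so $\Cen_{\Sigma_n}(D)/D$ is \emph{not} a $p$-group (it surjects onto $\Sigma_m$); and the centralizer condition does not force all of $C/D$, nor its $p$-elements, to act trivially on $M$ --- it only controls the kernel of $\Cen_{\Sigma_n}(D)\to\pi_0\Cen_{\GL_n\reals}(D)$, and one needs Lemma~\ref{lemma: centralizers in GL} (that $\pi_0\Cen_{\GL_n\reals}(D)$ is an elementary abelian $2$-group) to conclude that order-$p$ elements lie in that kernel when $p$ is odd; the case $p=2$ requires a separate, more delicate argument. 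Most importantly, the element of order $p$ demanded by Proposition~\ref{proposition: algebraic pruning criterion} must live in $N_K(D)/D$ for each isotropy group $K$ of $\Pcal_n$ (and for $K=\Sigma_n$, since the point must be approximated too), not merely in $\Cen_{\Sigma_n}(D)/D$: it has to stabilize the chain of partitions defining $K$. Producing such elements --- or, failing that, an odd involution in $\Cen_K(D)$ that centralizes the $p$-subgroups of $N_K(D)$, which via the involution condition forces the relevant twisted (co)homology to vanish outright rather than making the map an isomorphism --- is the content of Proposition~\ref{proposition: group theory cases} and Section~\ref{section: centralizers}, and none of it is supplied by your sketch.
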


The proof is found in Section~\ref{section:approximation-results}.
We will explain the assumptions and why they are needed near the end
of the introduction, where we outline the proof of the main theorem.

In the case $n=p^k$, Theorem~\ref{theorem: main theorem} leads to an
algebraic formula for the Bredon homology and cohomology of
$\Pcal_n^\diamond$. Let $\Stein_k$ denote
$\Hwiggle_{k-1}\left(B_k^{\diamond};\integers\right)$, 
which is the Steinberg module for
$\GL_k\left(\field_p\right)$. The group $\GL_k\left(\field_p\right)$
acts on $\coeff\left(\Sigma_n/\Delta_k\right)$.
Let $R$ denote the ring
$\integers\left[\GL_k\left(\field_p\right)\right]$. The following is a
consequence of Theorem~\ref{theorem: main theorem}. 

\begin{corollary} \label{corollary:only-one-dimension}
\onlyonedimensiontext
\end{corollary}

\begin{example} \label{example: relevant-Mackey-functor} 
  To see an example of a Mackey functor for which
  Theorem~\ref{theorem: main theorem} applies, recall that $\Sigma_n$
  acts on the one-point compactification $S^n$ of $\reals^n$ by
  permuting coordinates, and hence on the $j$-fold smash product
  $\left(S^{n}\right)^{\wedge j}=S^{nj}$.  Fix a prime $p$ and an
  integer $j$, where $j$ is required to be odd if $p\neq 2$. Let
  $E_*$ be a non-equivariant generalized homology theory that takes
  values in $\integers_{(p)}$-modules.  There is a graded Mackey functor
  $\coeff_E$ for $\Sigma_n$ that assigns to a finite $\Sigma_n$-set
  $T$ the graded abelian group
\[
 \coeff_E(T)_*=\Ewiggle_*\left(\Sigma^\infty T_+\wedge S^{nj}\right)_{\hobased
         \Sigma_n}\,.
\]
The graded constituents of $\coeff_E$ satisfy
the hypotheses of Theorem~\ref{theorem: main theorem}. This is discussed in 
detail in Section~\ref{section: our coefficients}, and in fact a more general statement is proved there.
\end{example}

Example~\ref{example: relevant-Mackey-functor} ties
Theorem~\ref{theorem: main theorem} and 
Corollary~\ref{corollary:only-one-dimension} to previous work. If $X$
is a pointed $\Sigma_n$-space, then filtering $X$ by its skeleta 
gives the ``isotropy spectral sequences''
\[
\begin{aligned}
   \BredonHomRed{a}{\Sigma_{n}}{X}{\left(\coeff_E\right)_{b}}
  &\Rightarrow \Ewiggle_{b+a}\left(X \wedge \Sigma^{\infty}S^{nj}\right)_{\hobased\Sigma_n}\\
   \BredonCohRed{a}{\Sigma_{n}}{X}{\left(\coeff_E\right)_{b}}
 &\Rightarrow  
   \Ewiggle_{b-a}\map_{*}\left(X, \Sigma^{\infty} S^{nj}\right)_{\hobased\Sigma_n}
\end{aligned}
\]
where the second is guaranteed to converge only if $X$ has a finite number
of $\Sigma_n$-cells. These spectral sequences can be used to obtain
the main theorem of~\cite{A-D} from 
Theorem~\ref{theorem: main theorem}. In effect, \cite{A-D} calculates
the abutments of these spectral sequences for $X=\Pcal_n^\diamond$
and $E=H\integers/p$, while Theorem~\ref{theorem: main theorem} and
Corollary~\ref{corollary:only-one-dimension} calculate the $E^2$-pages
in a form that implies a collapse result.

In fact, for $E_*=H_*(-;\field_p)$, the groups
$\BredonCohRed{*}{\Sigma_{n}}{\Pcal_n^\diamond}{\coeff_E}$
were first calculated in~\cite{A-M} by brute force, using detailed
knowledge of the homology of symmetric groups.  This paper gives a
new, more conceptual approach to those calculations.

\subsubsection*{Intended applications}

We are particularly interested in the graded Mackey functors
$\coeff_E$ as in Example~\ref{example: relevant-Mackey-functor} when
$E$ is the $p$-local sphere spectrum. As discussed 
in Section~\ref{section: our coefficients}, this application of
Theorem~\ref{theorem: main theorem} leads to new proofs of some
theorems of Behrens and Kuhn on the relationship
between the Goodwillie tower of the identity and the symmetric power
filtration of $H\integers$, as well as another approach to Kuhn's
proof of the Whitehead Conjecture.  We will pursue this in another
paper.

Theorem \ref{theorem: main theorem} can also be applied to the Mackey functor
\[
\coeff(T)_*=\pi_*L_K\left(\left(E\wedge \Sigma^\infty T_+\wedge S^{nj}\right)_{\hobased
         \Sigma_n}\right)\,.
\]
Here $E$ is one of Morava's $E$-theories and $L_K$ denotes
localization with respect to the corresponding Morava $K$-theory. 
In this case Theorem~\ref{theorem: main theorem}
seems to offer an alternative approach to some recent
calculations of Behrens and Rezk, and again we hope to discuss this elsewhere.

\subsubsection*{Connection with other work} 
There is a connection between this paper and the work of
Grodal~\cite{Grodal}. Grodal's paper concerns the Bredon cohomology of
spaces of the form $\vert\Ccal\vert$, where $\Ccal$ is a poset of
$p$-subgroups of a group $G$. Our space $\Pcal_n$ is of a similar
nature: it is $\Sigma_n$-equivariantly equivalent to the nerve of the
poset of nontrivial non-transitive subgroups (not just $p$-subgroups)
of $\Sigma_n$.  The (generalized) Steinberg module also plays a
central role in~\cite{Grodal}. 
\medskip

We devote the remainder of this introduction to outlining the
proof of Theorem~\ref{theorem: main theorem}.  Let~$G$ be a finite
group and $\Ccal$ a collection of subgroups of~$G$ (i.e., a set of
subgroups closed under conjugation).  Given a $G$-space $X$, one can
associate with it a $G$-space $X_{\Ccal}$, together with a natural map
$X_{\Ccal}\rightarrow X$, called the \defining{$\Ccal$-approximation to $X$}.
The approximation is characterized up to homotopy by
the fact that $X_{\Ccal}$ has isotropy only in $\Ccal$ and
$X_{\Ccal}\rightarrow X$ induces an equivalence on $K$-fixed points
for $K\in\Ccal$. 
The construction of ${\Ccal}$-approximations is reviewed in
Section~\ref{section: background on approximations}. 
A typical example of interest is the collection 
of nontrivial $p$-subgroups of a finite group~$G$, which we denote
$\pgroupsntof{G}$, following Quillen~\cite{Quillen}.  

The relevance of $\Ccal$-approximation is that 
$X_{\Ccal}\rightarrow X$ may induce an isomorphism on Bredon homology
without being an equivalence of $G$-spaces. To state a first result
along these lines, recall that a \defining{family} of subgroups of $G$
is a collection that is closed under taking subgroups as well as under
conjugation.  The following is a variant of the main result of
Webb~\cite{WebbSplit}.

\begin{allpgroups}
\allpgroupstext
\end{allpgroups}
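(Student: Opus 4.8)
The plan is to use the fact that $\pgroups$ is closed under passage to subgroups, so that $X_{\pgroups}$ can be modeled as $E\pgroups\times X$, where $E\pgroups$ is a universal $G$-space for the family $\pgroups$, and the canonical map $X_{\pgroups}\to X$ is then the projection (the characterization in Section~\ref{section: background on approximations}). The first move is to restrict along a Sylow $p$-subgroup $P$ of $G$. Every subgroup of $P$ is a $p$-group and hence lies in $\pgroups$, so $E\pgroups$, restricted to $P$, has contractible fixed points under every subgroup of $P$; that is, $E\pgroups$ becomes $P$-equivariantly contractible. Consequently the restriction to $P$ of $X_{\pgroups}\to X$ is $P$-equivariantly homotopic to the identity of $X$, and in particular it is an isomorphism on $\wgdHdown*(\,\cdot\,;\Gamma|_P)$ and on $\wgdHup*(\,\cdot\,;\Gamma|_P)$, where $\Gamma|_P$ is the restricted Mackey functor.

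The heart of the matter is the claim that, for \emph{every} $G$-space $Y$, the restriction maps
\[
\wgdHdown*(Y;\Gamma)\longrightarrow \wgdHdown*(Y|_P;\Gamma|_P),
\qquad
\wgdHup*(Y;\Gamma)\longrightarrow \wgdHup*(Y|_P;\Gamma|_P)
\]
are split monomorphisms, naturally in $Y$. This is where the hypothesis that $\Gamma$ is $p$-constrained enters. For a Mackey functor, the composite of the transfer $\tr_P^G$ with restriction acts on $\wgdHdown*(Y;\Gamma)$, and on $\wgdHup*(Y;\Gamma)$, as the natural action of the Burnside-ring element $[G/P]$; this action factors through a self-map of the Eilenberg--Mac\,Lane $G$-spectrum $H\Gamma$, namely the one corresponding to the Mackey functor endomorphism $\tr_P^G\,\mathrm{res}^G_P$ of $\Gamma$. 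The content of Definition~\ref{definition: p-constrained}, together with the fact that $\Gamma$ takes values in $\integers_{(p)}$-modules (Lemma~\ref{lemma: p-constrained implies p-local}) and that $[G:P]$ is prime to $p$, is that this self-map is an equivalence. Smashing $H\Gamma$ with $Y_+$ and passing to equivariant homotopy groups preserves equivalences, so $\tr_P^G$ composed with restriction is invertible on $\wgdHdown*(Y;\Gamma)$, whence restriction is split injective, naturally in $Y$; the cohomological case is identical, using the dual transfer.

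Granting the claim, apply it with $Y=X_{\pgroups}$ and with $Y=X$. In the commutative square whose vertical arrows are these restriction maps and whose horizontal arrows are induced by $X_{\pgroups}\to X$, the bottom arrow is an isomorphism by the first paragraph, the vertical arrows are split monomorphisms, and their splittings, which are built from the transfers $\tr_P^G$, commute with the horizontal arrows because transfers are natural with respect to maps of $G$-spaces. Thus the top arrow exhibits $\wgdHdown*(X_{\pgroups};\Gamma)\to\wgdHdown*(X;\Gamma)$ as a retract, in the arrow category, of an isomorphism, and a retract of an isomorphism is an isomorphism; the same argument applies to cohomology. Observe that no finiteness hypothesis on $X$ is required for this line of argument.

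I expect the only genuine obstacle to be the claim of the second paragraph: extracting from the definition of $p$-constrained the precise assertion that $[G/P]$ --- equivalently, the transfer-after-restriction operation --- acts invertibly, i.e.\ that $H\Gamma$ is a retract of its coinduction from a Sylow $p$-subgroup. Once that is in hand, everything else is formal bookkeeping with approximations, restriction, and transfer.
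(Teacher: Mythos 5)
Your argument is correct and is essentially the paper's proof: both reduce to a Sylow $p$-subgroup $P$, note that $X_{\pgroups}\to X$ is a $P$-equivalence (hence an isomorphism on $P$-Bredon homology and cohomology with restricted coefficients), and use the $p$-constrained hypothesis to exhibit the $G$-level map as a retract of this isomorphism via the transfer for the covering $G\times_P Y\to Y$, whose fibers have cardinality prime to $p$. The one step you flag as the genuine obstacle is precisely the paper's Proposition~\ref{proposition:nice-covers-give-retracts}, which it obtains by direct inspection of the Bredon chain-level composite $f_*\homologytr{f}$ rather than by passing through the Eilenberg--Mac\,Lane $G$-spectrum and the Burnside ring action.
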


If we add the trivial subgroup to the collection $\pgroupsntof{G}$, we
obtain the family of all $p$-subgroups of~$G$, which we denote
$\pgroupsof{G}$. (We suppress the group and write $\pgroupsntnone$ or
$\pgroupsnone$ if the group is clear from context.)  We will apply
Proposition~\ref{proposition: family} with $X=\Pcal_n^\diamond$ and
$\family=\pgroupsof{\Sigma_{n}}$, the family of all $p$-subgroups
of~$\Sigma_{n}$. In this way we obtain a starting approximation
$\left(\Pcal_n^\diamond\right)_{\pgroupsof{\Sigma_{n}}} \to
\Pcal_n^\diamond$ that induces an isomorphism on Bredon homology and
cohomology.

To analyze the approximation
$\left(\Pcal_n^\diamond\right)_{\pgroupsof{\Sigma_{n}}} \to
\Pcal_n^\diamond$, we need to reduce the size of the approximating
collection.  Our key criterion for eliminating (or ``discarding'') elements
of a collection $\Ccal$ without affecting the Bredon homology of
the $\Ccal$-approximation is Lemma~\ref{lem:take-away-one} below.  (The
lemma will be promoted from a statement about $*$ to a statement about
a $G$-space $X$ in Section~\ref{section: pruning}.)  We need a little
more terminology to explain the criterion. If $W$ is a finite group,
then $W$ acts on $\pgroupsntof{W}$ (by conjugation), and there is a
natural map of spaces
\begin{equation}\label{equation:key-map}
\lvert\pgroupsntof{W}\rvert_{hW} \to *_{hW} = BW \,.
\end{equation}
If $M$ is a $W$-module, $\pgroupsntof{W}$ is called
\defining{homologically (resp. cohomologically) $M$-ample} if
\eqref{equation:key-map} induces an isomorphism on ordinary twisted
homology (resp. cohomology) with coefficients in~$M$.

For notation in the following lemma, suppose that 
$D$ is a subgroup of $G$,
let $N_G(D)$ denote the normalizer of $D$ in $G$,
and let $W_G(D)=N_G(D)/D$ be the Weyl group of $D$ in $G$. 
Notice that $W_G(D)$ acts on $G/D$ by $G$-equivariant maps.

\begin{takeawayone}
\takeawayonetext
\end{takeawayone}

To apply Lemma~\ref{lem:take-away-one}, a criterion for ampleness is
needed. It is known that if $M$ is a $\integers_{(p)}$-module, 
and $W$ has an element
of order $p$ that acts trivially on $M$, then $\pgroupsntof{W}$ is
$M$-ample 
(Proposition~\ref{proposition: algebraic pruning criterion}).
Typically, such elements come from the centralizer of $D$, which is
why condition~\eqref{condition: centralizer} is present in
Theorem~\ref{theorem: main theorem}.

If we start from a Bredon (co)homology isomorphism
$\left(\Pcal_n^\diamond\right)_{\pgroupsof{\Sigma_{n}}} \to
\Pcal_n^\diamond$, we would like to know how many subgroups must be
eliminated from $\pgroupsof{\Sigma_{n}}$ before we obtain an
identifiable calculation of the Bredon (co)homology
of~$\Pcal_{n}^{\diamond}$. In particular, how many subgroups must be
eliminated before we can conclude that $\Pcal_{n}^{\diamond}$ has the
Bredon (co)homology of a point?  Suppose that $X$ is a $G$-space and
that $\Ccal$ is a collection of subgroups of $G$ such that
$X_{\Ccal}\rightarrow X$ is an isomorphism on Bredon (co)homology.  If
it happens that $X^{H}\simeq *$ for all $H\in\Ccal$, then $X_{\Ccal}$
has the same Bredon (co)homology as a point.  Hence so does $X$,
and $X^{\diamond}$ has trivial reduced Bredon (co)homology.

The preceding paragraph suggests that if we start from a Bredon
(co)homology isomorphism
$\left(\Pcal_n^\diamond\right)_{\pgroupsof{\Sigma_{n}}} \to
\Pcal_n^\diamond$, 
it would be nice to discard from $\pgroupsof{\Sigma_{n}}$ the subgroups 
whose fixed point spaces are \emph{not} contractible.  We hope
that there are not too many of them, and that they can be discarded
from $\pgroupsof{\Sigma_{n}}$ without damaging the starting Bredon
(co)homology isomorphism

We call a subgroup $H\subseteq\Sigma_{n}$ \defining{problematic} if
$\left(\Pcal_n^\diamond\right)^H$ is not contractible. 

\begin{badsubgroups}
\badsubgroupstext
\end{badsubgroups}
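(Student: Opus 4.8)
The plan is to recast the statement as a contractibility result for a poset and prove the contrapositive. A simplex of $\Pcal_n$ fixed by $H$ is a chain of $H$-invariant partitions, so $(\Pcal_n)^H$ is the nerve of the poset $\mathcal{Q}$ of those partitions $\pi$ of $\n$ --- distinct from both the discrete and the indiscrete partition --- whose set of blocks is permuted by $H$; equivalently, $\mathcal{Q}$ is the poset of $H$-equivariant surjections $\n\twoheadrightarrow T$ that are neither injective nor constant, ordered by refinement. We must show that $\mathcal{Q}$ is contractible whenever $H$ fails to be elementary abelian, or acts on $\n$ with a nontrivial point stabilizer. For $H=1$ there is nothing to prove, so assume $H\neq1$; note also that either exceptional hypothesis forces $n>p$, since a $p$-subgroup of $\Sigma_p$ is trivial or a regular copy of $\integers/p$. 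The only homotopical input is the standard poset technology: an order-preserving, idempotent self-map $c$ of $\mathcal{Q}$ with $c(\pi)$ coarser than $\pi$ for all $\pi$ and with image a sub-poset having a least element exhibits $\mathcal{Q}$ as contractible, as does any conical contraction through a fixed partition.

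The heart of the matter is the case in which $\n$ is a free transitive $H$-set, i.e. $\n\cong H$ with $H$ acting by translation; then the block through the identity is a subgroup, so $\mathcal{Q}$ is identified with the poset of subgroups $V$ with $1<V<H$. When $H$ is not elementary abelian its Frattini subgroup $\Phi(H)$ is nontrivial, and $V\mapsto V\Phi(H)$ is a closure operator onto the interval of subgroups containing $\Phi(H)$, which has least element $\Phi(H)$; it lands among proper subgroups because $V\Phi(H)=H$ forces $V=H$. Hence $\mathcal{Q}$ is contractible in this case.

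The general non-elementary-abelian case reduces to this. Since a nontrivial normal $p$-subgroup of a $p$-group meets the centre, we may choose a central element $z$ of order $p$ lying in $\Phi(H)$. Let $\pi_{\langle z\rangle}$ be the partition of $\n$ into $\langle z\rangle$-orbits (a member of $\mathcal{Q}$ because $n>p$ and $z$ acts nontrivially), and set $c(\pi)=\pi\vee\pi_{\langle z\rangle}$. In quotient language $c(\pi)$ is the composite $\n\twoheadrightarrow T\twoheadrightarrow T/\langle z\rangle$ with $T=\n/\pi$, so $c(\pi)$ is constant only if $T$ is a single free $\langle z\rangle$-orbit, i.e. $T\cong H/V$ for a maximal subgroup $V$ not containing $z$ --- impossible, since $z\in\Phi(H)\subseteq V$. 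Thus $c$ is a closure operator whose image is the sub-poset of members of $\mathcal{Q}$ coarser than $\pi_{\langle z\rangle}$, and this sub-poset has least element $\pi_{\langle z\rangle}$, so $\mathcal{Q}$ is contractible.

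The remaining case --- $H$ elementary abelian but acting with a nontrivial point stabilizer --- is the crux, and I expect it to carry the bulk of the work. Here $\Phi(H)=1$, and the naive coarsening and refining operators genuinely fail: for a \emph{freely} acting elementary abelian group $\mathcal{Q}$ is not contractible at all (it is a Tits building), so any successful argument must exploit non-freeness. Since $H$ is abelian and faithful, a transitive action would be free, so in our situation there are at least two orbits; I would induct on the number of orbits, writing $\n=A\sqcup O$ with $O=H/K$ a non-free orbit ($1\neq K$, and $O$ not a single fixed point, that sub-case being handled separately), and analyze $\mathcal{Q}$ via the way an invariant partition of $\n$ decomposes into its restrictions to $A$ and to $O$ together with the merging of blocks across the two. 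The non-triviality of $K$ supplies a canonical nontrivial coarsening $O\to H/L$ with $K\subsetneq L$ maximal, which one uses to contract the part of $\mathcal{Q}$ where $O$ is not diagonally coupled to $A$, while the inductive hypothesis disposes of the rest. The real obstacle is controlling exactly those ``diagonal'' invariant partitions that couple $O$ to orbits of $A$ --- the partitions that build the building in the free case --- and showing that the non-freeness of $O$, visible in its nontrivial point stabilizer, kills their contribution; this bookkeeping, organized by orbit type and with fixed points treated as a degenerate sub-case, is where I expect the difficulty to lie.
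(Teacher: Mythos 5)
Your first half is sound and is in substance the paper's own argument. The reduction of $(\Pcal_n)^H$ to the poset $\mathcal{Q}$ of $H$-invariant proper nontrivial partitions is correct; the choice of a central order-$p$ element $z$ in $\Phi(H)$ when $H$ is not elementary abelian is exactly the paper's group-theoretic lemma; and the closure operator $c(\pi)=\pi\vee\pi_{\langle z\rangle}$, together with the check that $c(\pi)$ is never indiscrete because a transitive $\langle z\rangle$-action on the block set would force a block stabilizer to be a maximal subgroup missing $z\in\Phi(H)$, is the paper's ``minimal strongly $V$-fixed coarsening'' retraction onto a poset with least element $\pi_{\langle z\rangle}$. (Your separate discussion of the free transitive case is subsumed by this and could be deleted.)

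The second half, however, is a genuine gap: for ``$H$ elementary abelian with a nontrivial point stabilizer implies $\mathcal{Q}$ contractible'' you offer only a plan (induction on orbits, splitting $\n=A\sqcup O$, controlling ``diagonally coupled'' partitions) and you explicitly state that the essential bookkeeping remains to be done. As written the proposal does not prove the freeness assertion. The missing case does not require any of that machinery; it succumbs to the identical closure operator with a different choice of $z$. If some stabilizer $H_x$ is nontrivial, choose $z\in H_x$ of order $p$; it is central since $H$ is abelian. For any $H$-invariant $\pi$, the element $z$ carries the block of $x$ to the block of $zx=x$, i.e.\ fixes that block, so the block of $x$ in $\pi\vee\pi_{\langle z\rangle}$ is just the block of $x$ in $\pi$, a proper subset of $\n$; hence $c(\pi)$ is never indiscrete. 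Moreover $\pi_{\langle z\rangle}$ is nontrivial ($z\neq 1$) and proper (the orbit of $x$ is $\{x\}$), so $\mathcal{Q}$ again deformation retracts onto a cone with vertex $\pi_{\langle z\rangle}$. This is precisely how the paper argues: its retraction lemma shows contractibility whenever some nontrivial central $V$ is never transitive on the blocks of any $H$-fixed partition, and a $V\cong\integers/p$ with a fixed point can never be transitive on a block set of size at least $2$. Reading that lemma contrapositively also yields the freeness conclusion directly: non-contractibility forces every $\integers/p\subseteq H$ to permute the $p$ blocks of some invariant partition freely and transitively, hence to act freely on $\n$.
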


This proposition tells us that in fact there are very few problematic
subgroups.  The proof of Theorem~\ref{theorem: main theorem} then goes by
using Lemma~\ref{lem:take-away-one} to eliminate these few subgroups from the
collection~$\pgroupsof{\Sigma_{n}}$.  We can usually
establish the ampleness required in the hypothesis of
Lemma~\ref{lem:take-away-one} by using centralizing elements, as
discussed just after the statement of the lemma, above. 
However, it turns out that in a few cases,
appropriate centralizing elements do not exist. These are cases in
which an isotropy group of $\Pcal_n$ contains a \defining{$p$-centric}
problematic subgroup. Nonetheless, it turns out that the ampleness hypothesis
holds in these cases, with one exception, 
because all the relevant homology and cohomology
groups vanish.  This is where we need condition~\eqref{condition:
  involution} of the theorem.

In the end, the only problematic subgroups that cannot be eliminated using
one of the methods we have described are elementary abelian $p$-subgroups
of $\Sigma_{n}$ that act transitively on~$\n$. This occurs only when 
$n=p^{k}$, and in this case the Tits building $B_{k}$ comes up because 
it is the fixed point space of the elementary abelian $p$-group 
$\Delta_{k}$ of~$\Sigma_{p^{k}}$ acting on~$\Pcal_{n}$. 

\subsubsection*{Organization}\mbox{}\\
\indent In Sections~\ref{section: background on Bredon homology}
and~\ref{section: mackey functors}, we give background
on Bredon homology and cohomology and Mackey functors, and we 
state the key properties that we use and prove some basic results. 
Section~\ref{section: background on approximations} reviews approximation
theory from \cite{A-D} and proves Proposition~\ref{proposition: family},
the initial approximation result for Bredon homology and cohomology.
Section~\ref{section: pruning} discusses how to discard subgroups from
an approximating collection.

Section~\ref{section: fixed point sets} shows that most $p$-subgroups
of $\Sigma_{n}$ have contractible fixed point spaces and 
Proposition~\ref{proposition: bad subgroups} identifies those
that may not (``problematic'' subgroups). 
Section~\ref{section: isotropy groups} collects some elementary 
results about the isotropy groups of~$\Pcal_{n}$. 
Then Section~\ref{section: centralizers and involutions} studies centralizers of 
problematic subgroups inside isotropy groups of~$\Pcal_{n}$,
with a view to acquiring the algebraic input for 
Proposition~\ref{proposition: algebraic pruning criterion}. 

Section~\ref{section: centralizers} establishes that the ampleness
hypothesis needed to use 
Proposition~\ref{proposition: pruning criterion} holds in the case of
the coefficients in Theorem~\ref{theorem: main theorem}. This gives
the data needed to prove Theorem~\ref{theorem: main theorem} and
Corollary~\ref{corollary:only-one-dimension} in
Section~\ref{section:approximation-results}.  Finally,
Section~\ref{section: our coefficients} looks at the coefficient
functors of Example~\ref{example: relevant-Mackey-functor}.

\subsubsection*{Notation and Terminology}
\mbox{}\\
\indent Throughout the paper, $G$ is a finite group and $p$ is 
a fixed prime. 
We use ``space'' to mean a simplicial set, and we
often do not distinguish between a category and its nerve, trusting to
context to indicate which is under
discussion. 
We fix a model for a free, contractible
$G$-space $EG$, and given a $G$-space~$X$, we write $X_{hG}$ for the
unreduced Borel construction $(EG\times X)/G$. If $X$ has a basepoint and
the basepoint is fixed by the $G$-action, we write $X_{\hobased G}$
for the reduced Borel construction $\left(EG_{+}\wedge X\right)/G$.

If $H$ is a subgroup of a group~$G$, we write $\Cen_{G}(H)$ for the 
centralizer of $H$ in~$G$, we write $N_{G}(H)$ for the normalizer of 
$H$ in~$G$, and we write $W_{G}(H)=N_{G}(H)/H$ for the Weyl group of
$H$ in~$G$. 
Following Quillen~\cite{Quillen}, we denote the poset of non-identity
$p$-subgroups of $G$ by $\pgroupsntof{G}$, or just $\pgroupsntnone$ if
the group is clear from context. The poset of all $p$-subgroups of
$G$, including the trivial group, is denoted by $\pgroupsof{G}$ 
or~$\pgroupsnone$. 
If $X$ is a $G$-space, we write $\Iso(X)$ to denote the collection
of subgroups of $G$ that appear as isotropy groups of~$X$.

We regard a partition $\lambda$ of $\n$ as corresponding to
equivalence classes of an equivalence relation, where
$x\sim_{\lambda}y$ means that $x$ and $y$ are in the same component
of~$\lambda$. (We write simply $x\sim y$ if the partition is clear
from context.)  We write $\class{\lambda}$ to denote the set of
components, or equivalence classes, of a partition~$\lambda$.

\subsubsection*{Acknowledgement} 
We thank Jesper Grodal for pointing us toward helpful
references, and for making comments that helped to improve the paper.
In particular, Grodal's promptings led us to formulate
Proposition~\ref{proposition: family} in terms of Mackey functors that
are projective relative to a family.

\section{Bredon homology and cohomology}
\label{section: background on Bredon homology}

Let $G$ be a finite group. In this section, we give general background
on $G$-spaces and on their Bredon homology and
cohomology~\cite{Bredon}.

We work simplicially. Thus, by a $G$-space $X$ we mean a simplicial
set with a $G$-action. A $G$-map $f: X\to Y$ is called a
\defining{$G$-equivalence} if it induces an equivalence $X^K\to Y^K$
of fixed point spaces for each subgroup $K$ of $G$.  The geometric
realization and the singular set functors preserve fixed points. It
follows that if $f\colon X \to Y$ is a $G$-equivalence of simplicial
sets, then the geometric realization of $f$ is an 
equivalence in the category of $G$-topological spaces.

Given a $G$-space $X$, let $\Iso(X)$ denote the set of all subgroups
of $G$ that appear as isotropy subgroups of simplices of $X$. The
following lemma gives an economical criterion for
recognizing $G$-equivalences. A proof can be found
in~\cite[4.1]{Homology}, but it is older than this.
 
\begin{lemma}  
\label{lemma:check-G-equivalence}
If $f: X\to Y$ is a map of $G$-spaces that induces equivalences
$X^K\to Y^K$ for each $K\in \Iso(X)\cup\Iso(Y)$, then $f$ is a
$G$-equivalence.
\end{lemma}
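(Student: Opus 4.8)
The plan is to prove the stronger assertion that $f$ induces a weak equivalence $X^H\to Y^H$ for \emph{every} subgroup $H$ of $G$ — which is precisely the statement that $f$ is a $G$-equivalence — by a downward induction on $H$. The engine of the induction is the elementary observation that if $H$ is not itself an isotropy group of $X$, then the fixed subcomplex $X^H$ is reassembled from the fixed subcomplexes $X^K$ of the subgroups $K$ strictly containing $H$, for which the inductive hypothesis is available.

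First I would record the isotropy bookkeeping. A simplex $\sigma$ of $X$ lies in $X^H$ exactly when its isotropy subgroup $\Iso_G(\sigma)$ contains $H$, and $\Iso_G(\sigma)$ always belongs to $\Iso(X)$. Hence, if $H\notin\Iso(X)$, every simplex of $X^H$ has isotropy strictly larger than $H$, so
\[
  X^H \;=\; \bigcup_{H\subsetneq K} X^K
\]
as a subcomplex of $X$, the union taken over the subgroups $K$ with $K\supsetneq H$ (equivalently, over the minimal such $K$). Moreover $X^K\cap X^{K'}=X^{\langle K,K'\rangle}$ and $\langle K,K'\rangle\supsetneq H$, so the diagram $K\mapsto X^K$ indexed by the poset of proper overgroups of $H$ is closed under the intersections that occur and all of its structure maps are inclusions of subcomplexes. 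The same holds for $Y$, and $f$ carries the $X$-diagram to the $Y$-diagram.

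Now suppose, toward a contradiction, that the set of subgroups $H$ for which $X^H\to Y^H$ fails to be an equivalence is nonempty, and choose $H$ maximal in this set (possible since $G$ is finite). If $H\in\Iso(X)\cup\Iso(Y)$, the hypothesis of the lemma already supplies the equivalence; so $H\notin\Iso(X)\cup\Iso(Y)$. By the previous paragraph $X^H$ and $Y^H$ are the unions of the fixed subcomplexes attached to the proper overgroups of $H$; by maximality of $H$, the map $f$ is an equivalence on $X^K\to Y^K$ for every such $K$, and likewise on every intersection $X^{\langle K,K'\rangle}\to Y^{\langle K,K'\rangle}$. Since all the structure maps are inclusions of simplicial subcomplexes, hence cofibrations, these poset diagrams are (Reedy) cofibrant, their colimits are $X^H$ and $Y^H$, and a levelwise equivalence between them induces an equivalence on homotopy colimits, i.e.\ $X^H\to Y^H$ is an equivalence — contradicting the choice of $H$. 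Thus no such $H$ exists and $f$ is a $G$-equivalence.

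The step that warrants the most care is this last one: recognizing $X^H$ as the homotopy colimit of the diagram $K\mapsto X^K$ over the proper overgroups of $H$. For simplicial sets it is routine — every structure map in sight is an inclusion of subcomplexes, so no cofibrant replacement is needed, and one can alternatively argue by iterated Mayer--Vietoris, inducting on the number of minimal overgroups of $H$ and using that a union of subcomplexes along a cofibration is a homotopy pushout. Everything else reduces to bookkeeping with isotropy groups and the finiteness of $G$.
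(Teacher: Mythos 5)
The paper does not prove this lemma; it is quoted from \cite[4.1]{Homology}, so there is no in-text argument to compare against. Your proof is correct and is essentially the standard one: downward induction on $H$, using that for $H\notin\Iso(X)$ every simplex of $X^H$ has strictly larger isotropy, so that $X^H$ is the union of the $X^K$ over the proper overgroups $K$ of $H$, and that this union is a homotopy colimit because the family is closed under intersection via $X^K\cap X^{K'}=X^{\langle K,K'\rangle}$. Two points deserve tightening. First, ``all structure maps are inclusions of subcomplexes, hence the diagram is Reedy cofibrant'' is not a valid implication for a general poset-shaped diagram (two subcomplexes mapping into a third already give a counterexample); what actually makes your diagram Reedy cofibrant (for the direct-category structure on the finite poset of proper overgroups ordered by reverse inclusion) is that each latching object $\colim_{K'\supsetneq K}X^{K'}$ is itself computed as the union $\bigcup_{K'\supsetneq K}X^{K'}$ --- again by closure under joins --- so that the latching map is the inclusion of a subcomplex of $X^K$. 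You have this ingredient in hand; it just needs to be applied to the latching maps rather than to the individual arrows. Second, in the Mayer--Vietoris variant the induction cannot literally run on the number of minimal overgroups of $H$: peeling off $X^{K_m}$ leaves the intersection $\bigl(\bigcup_{i<m}X^{K_i}\bigr)\cap X^{K_m}=\bigcup_{i<m}X^{\langle K_i,K_m\rangle}$, a union of fixed sets of typically non-minimal overgroups. Instead induct on the number of terms in an arbitrary finite union $\bigcup_i X^{L_i}$ with each $L_i\supsetneq H$; the intersection term is then again such a union with one fewer term, and the gluing lemma for pushouts along cofibrations closes the induction. With these adjustments the argument is complete.
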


We next describe the Bredon chain and cochain complexes; to minimize
redundancy, we handle both simultaneously, with the terms for
cohomology in parentheses.  Let $\mu$ be an additive, covariant (resp.
contravariant) functor from finite $G$-sets to abelian groups, i.e., a
functor taking coproducts of $G$-sets to sums (resp. products) of
abelian groups. Such a functor will be called a homological (resp.
cohomological) coefficient system for $G$. The functor $\mu$ can be
extended to all $G$-sets by the formula
\[
\mu(T)\definedas\underset{T_\alpha}{\colim} \mu\left(T_\alpha\right)
\]
(resp. 
$\mu(T)\definedas \displaystyle\lim_{T_\alpha}\mu\left(T_\alpha\right)$),
where $T_\alpha$ ranges over the poset of finite $G$-subsets of $T$.  
The Bredon chains (resp. cochains) on $X$ with coefficients in $\mu$
are obtained by applying $\mu$ degreewise to $X$ to obtain a
simplicial (resp.  cosimplicial) abelian group, and then applying
Dold-Kan's normalized chains functor, to obtain a chain (resp.
cochain) complex.  The Bredon homology (resp. cohomology) of $X$ with
coefficients in $\mu$, denoted $\BredonHom{*}{G}{X}{\mu}$ (resp.
$\BredonCoh{*}{G}{X}{\mu}$), is the homology (resp. cohomology) of
this chain (resp. cochain) complex.

Let $\Ocal(G)$ be the orbit category of $G$, namely the category of
transitive $G$-sets and $G$-equivariant maps. If $\mu$ is any functor
from $\Ocal(G)$ to abelian groups then it can be extended to an
additive functor on all finite $G$-sets in a unique way. Thus, one may
define Bredon homology (resp. cohomology) with coefficients in an
arbitrary functor from $\Ocal(G)$ to abelian groups.

\begin{remark}\label{remark: Bredon Kan}
  One may view an additive covariant functor $\mu$ from $G$-sets to
  abelian groups as a functor from $\Ocal(G)$ to chain complexes that
  takes values in complexes concentrated in degree zero. Then the
  Bredon chain functor is the homotopy left Kan extension of $\gamma$
  from $\Ocal(G)$ to the category of simplicial $G$-sets. In the
  contravariant case, the Bredon cochain functor is the homotopy right Kan
  extension of $\mu$ from the opposite category of $\Ocal(G)$ to
  the opposite category of simplicial $G$-sets.
\end{remark}

The following example will come up later in the paper, in the proofs of
Lemma~\ref{lemma:one-subgroup-approximation} and, implicitly,
Lemma~\ref{lem:take-away-one}.
\begin{example}\label{example:bredon-ordinary}
  Let $G$ be a group, and $D\subset G$ a subgroup. Note that the Weyl
  group $W_{G}(D)$ acts on $G/D$ by $G$-equivariant
  maps. If $X$ is a space with an action of~$W_{G}(D)$, then
  $G\times_{N_{G}(D)} \left(X\times EW_{G}(D)\right)$ 
  is a space with an action of~$G$.
  Let $\mu$ be a (homological or cohomological) coefficient system for
  $G$. There is an isomorphism
\begin{equation}  \label{eq: twisted homology}
\BredonHom{*}{G}{G\times_{N_{G}(D)} \left(X\times EW_{G}(D)\right)}{\mu}
          \cong H_*\left(X_{hW_{G}(D)}; \mu(G/D)\right)
\end{equation}
or, in the cohomological case
\begin{equation}  \label{eq: twisted cohomology}
\BredonCoh{*}{G}{G\times_{N_{G}(D)} \left(X\times EW_{G}(D)\right)}{\mu}
          \cong H^*\left(X_{hW_{G}(D)}; \mu(G/D)\right).
\end{equation}
Here the groups on the left are Bredon homology or cohomology groups,
while the groups on the right are ordinary homology or cohomology with
twisted coefficients in the $W$-module~$\mu(G/D)$. To see where these
isomorphisms come from, let $\mu$ be a homological coefficient
system for $G$, and let $S$ be a set with a free $W$-action. There
is an isomorphism 
$\mu\left(G\times_N S\right) 
    \cong \mu(G/D)\otimes_{\integers[W]} \integers[S]$,
which is natural in $S$. If $\mu$ is cohomological, then there is 
a natural isomorphism 
$\mu\left(G\times_N S\right) 
     \cong \Hom_{\integers[W]}\left(\integers[S], \mu(G/D)\right)$. 
It follows that there is an isomorphism of simplicial abelian groups
\[
\mu\left(G\times_N (X\times EW)\right) 
           \cong \mu(G/D)\otimes_{\integers[W]} \integers[X\times EW].
\] 
This isomorphism implies the isomorphism in~\eqref{eq: twisted homology}. 
The cohomological version of~\eqref{eq: twisted cohomology}
is proved similarly.
\end{example}

\subsubsection*{Homotopy properties}
Bredon homology has good formal properties. Proofs of the homological
cases of the following two well-known lemmas are given
in~\cite[4.8]{Sharp} and~\cite[4.11]{Sharp}. The cohomological
versions can be proved similarly. The first one also follows from
Remark~\ref{remark: Bredon Kan}.

\begin{lemma}  \label{lemma: Bredon invariance}
  If $f: X\rightarrow Y$ is a $G$-equivalence, then $f$ induces
  isomorphisms on Bredon homology and cohomology (with any
  coefficients).
\end{lemma}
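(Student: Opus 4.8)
The plan is to realize the Bredon chain and cochain complexes as a coend and an end over the orbit category, and then to reduce the assertion to a standard homological fact. Let $\mathcal{O}_G$ denote the category of transitive $G$-sets and $G$-maps, so that (as in the discussion following the definition of Bredon homology) an additive covariant functor $\gamma$ on $G$-sets is the same thing as a functor $\mathcal{O}_G\to\mathrm{Ab}$, and an additive contravariant $\gamma^\natural$ the same as a functor $\mathcal{O}_G\op\to\mathrm{Ab}$. For a simplicial $G$-set $X$, let $\underline{X}$ be the normalized chain complex of the simplicial $\mathcal{O}_G\op$-module $[n]\mapsto\integers[X_n^{(-)}]$, where $X_n^{(-)}$ is the functor $G/K\mapsto X_n^K=\Map_G(G/K,X_n)$. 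Because each $G$-set $X_n$ is a coproduct of orbits $G/H_\sigma$, each term $\integers[X_n^{(-)}]$ is a (possibly infinite) direct sum of representable functors; thus $\underline{X}$ is a non-negatively graded complex that is free, hence projective, in each degree.

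First I would establish natural isomorphisms of chain and cochain complexes
\[
C_*^{\br}(X;\gamma)\;\cong\;\underline{X}\otimes_{\mathcal{O}_G}\gamma,
\qquad
C^*_{\br}(X;\gamma^\natural)\;\cong\;\Hom_{\mathcal{O}_G}\bigl(\underline{X},\gamma^\natural\bigr),
\]
the coend, resp.\ end, being formed degreewise. In each simplicial degree this is the co-Yoneda identification $\integers[(G/H)^{(-)}]\otimes_{\mathcal{O}_G}\gamma\cong\gamma(G/H)$ (resp.\ the Yoneda identification $\Hom_{\mathcal{O}_G}(\integers[(G/H)^{(-)}],\gamma^\natural)\cong\gamma^\natural(G/H)$), combined with the additivity of $\gamma$ and $\gamma^\natural$ to pass from a single orbit to $X_n=\coprod_\sigma G/H_\sigma$; and normalization commutes with the additive functors $-\otimes_{\mathcal{O}_G}\gamma$ and $\Hom_{\mathcal{O}_G}(-,\gamma^\natural)$.

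Now suppose $f\colon X\to Y$ is a $G$-equivalence. Then for each $K$ the map $X^K\to Y^K$ is a weak homotopy equivalence, so $\underline{f}\colon\underline{X}\to\underline{Y}$ is a quasi-isomorphism of non-negatively graded, degreewise-projective complexes of $\mathcal{O}_G\op$-modules. The homological input is that a quasi-isomorphism between bounded-below complexes of projectives is a chain homotopy equivalence: its mapping cone is a bounded-below acyclic complex of projectives, hence contractible, by the usual stepwise construction of a contracting homotopy. Since chain homotopy equivalences are preserved by every additive functor, applying $-\otimes_{\mathcal{O}_G}\gamma$ and $\Hom_{\mathcal{O}_G}(-,\gamma^\natural)$ to $\underline{f}$ yields chain homotopy equivalences $C_*^{\br}(X;\gamma)\xrightarrow{\sim}C_*^{\br}(Y;\gamma)$ and $C^*_{\br}(Y;\gamma^\natural)\xrightarrow{\sim}C^*_{\br}(X;\gamma^\natural)$, and in particular $f$ induces isomorphisms on Bredon homology and cohomology with arbitrary coefficients.

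The argument is essentially formal and I do not anticipate a real obstacle; the single point that merits care is that $\underline{X}$ is a complex of projectives but in general not a resolution of anything (it usually has homology in many degrees), so the operative principle is that a quasi-isomorphism between bounded-below complexes of projectives is a homotopy equivalence, rather than any flat- or projective-base-change statement. The coend/end identification, the decomposition of $G$-sets into orbits, and the additivity of the (co)end functors are routine.
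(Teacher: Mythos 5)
The paper does not actually prove this lemma; it is quoted verbatim from \cite[4.6]{Sharp}, so there is no internal argument to compare against. Your proof is correct and complete: the identification of the Bredon (co)chain complex as a degreewise coend/end against a bounded-below, degreewise-projective complex of $\mathcal{O}_G\op$-modules, the observation that a $G$-equivalence makes $\underline{f}$ an objectwise quasi-isomorphism, and the reduction to the fact that a quasi-isomorphism of bounded-below complexes of projectives is a chain homotopy equivalence (hence is preserved by the additive functors $-\otimes_{\mathcal{O}_G}\gamma$ and $\Hom_{\mathcal{O}_G}(-,\gamma^\natural)$) is exactly the standard argument, and it is essentially the one underlying the cited reference. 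The one point you flag yourself -- that $\underline{X}$ is projective in each degree but is not a resolution, so one must use the homotopy-equivalence principle rather than base change -- is indeed the only place where care is needed, and you handle it correctly; note also that normalization preserves degreewise projectivity because the degenerate simplices form a $G$-subset, so the normalized quotient is again a sum of representables.
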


\begin{lemma} \label{lemma: Bredon gluing}
A homotopy pushout square 
\[
\begin{CD}
X' @>>> X\\
@VVV @VVV \\
Y'@>>> Y
\end{CD}
\] 
of $G$-spaces gives long exact sequences
in Bredon homology and cohomology (with any coefficients).
\end{lemma}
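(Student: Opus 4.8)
The plan is to reduce the given homotopy pushout to a strict pushout along a cofibration and then to observe that applying a Bredon coefficient functor degreewise turns such a square into a short exact sequence of (co)chain complexes, whose associated long exact sequence is the one we want. First, using the invariance of Bredon homology and cohomology under $G$-equivalences (Lemma~\ref{lemma: Bredon invariance}), I would replace the square by a $G$-equivalent one in which one leg is a cofibration: factor the map $f\colon X'\to X$ as a levelwise injective $G$-map $X'\hookrightarrow\tilde X$ followed by a $G$-equivalence $\tilde X\to X$ (for instance the simplicial mapping cylinder, built functorially from coproducts and products with $\Delta^1$), and set $Y=\tilde X\cup_{X'}Y'$. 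Since $X'\hookrightarrow\tilde X$ is a monomorphism, $K$-fixed points commute with this pushout for every subgroup $K$, so the square with corners $X'$, $\tilde X$, $Y'$, $Y$ is $G$-equivalent, corner by corner and compatibly with the structure maps, to the original one. Hence it suffices to treat a strict pushout $Y=X\cup_{X'}Y'$ in which $f\colon X'\to X$ is levelwise injective; write $g\colon X'\to Y'$ for the other map out of $X'$.

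For such a square, fix a simplicial degree $m$ and write $X_m=X'_m\sqcup C_m$, where $C_m$ is the ($G$-stable) complement of the image of $X'_m$; then $Y_m\cong Y'_m\sqcup C_m$ as $G$-sets. Since the coefficient functor $\gamma$ sends coproducts of $G$-sets to direct sums, an elementary diagram chase shows that
\[
0\longrightarrow\gamma(X'_m)\xrightarrow{\;(\gamma(f),\,-\gamma(g))\;}\gamma(X_m)\oplus\gamma(Y'_m)\longrightarrow\gamma(Y_m)\longrightarrow 0
\]
is (split) exact, the summand $\gamma(C_m)$ mapping isomorphically onto the cokernel of $\gamma(Y'_m)\to\gamma(Y_m)$. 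These sequences respect the simplicial structure maps, so after normalization they assemble into a short exact sequence
\[
0\to C_*(X';\gamma)\to C_*(X;\gamma)\oplus C_*(Y';\gamma)\to C_*(Y;\gamma)\to 0
\]
of Bredon chain complexes, and its long exact homology sequence is the Mayer--Vietoris sequence. Cohomology is dual: $\gamma^{\natural}$ carries the same coproduct decompositions to products, giving a short exact sequence
\[
0\to C^*(Y;\gamma^{\natural})\to C^*(X;\gamma^{\natural})\oplus C^*(Y';\gamma^{\natural})\to C^*(X';\gamma^{\natural})\to 0
\]
of Bredon cochain complexes and hence a long exact cohomology sequence. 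Equivalently, one can note that $C_*(-;\gamma)$ sends $X'\hookrightarrow X$ and $Y'\hookrightarrow Y$ to injections with the common cokernel $C_*(C;\gamma)$, so the square of chain complexes is itself a homotopy pushout and the long exact sequence follows formally.

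The one point that needs genuine care is the opening reduction: one must check that the strict model $\tilde X\cup_{X'}Y'$ really does compute the homotopy pushout $G$-equivariantly --- that it agrees with the original square, up to $G$-equivalence, on $K$-fixed points for every $K$, not merely on underlying spaces --- and that this replacement is natural enough that the resulting long exact sequences are the intended ones. This is routine simplicial homotopy theory ($K$-fixed points commute with pushouts along monomorphisms and with the mapping-cylinder construction, and a homotopy pushout of $G$-spaces is by definition one detected on all fixed point sets), but it is the only step where anything beyond a direct calculation is involved; everything after strictification uses nothing but additivity of the coefficient functor.
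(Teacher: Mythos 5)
Your argument is correct: strictifying the square via the equivariant mapping cylinder (using that $K$-fixed points commute with products with $\Delta^1$ and with pushouts along levelwise injections, and then invoking Lemma~\ref{lemma: Bredon invariance}), and then using additivity of $\gamma$ (resp.\ the product property of $\gamma^{\natural}$) on the degreewise decomposition $X_m\cong X'_m\sqcup C_m$, $Y_m\cong Y'_m\sqcup C_m$ to get a short exact sequence of normalized (co)chain complexes, is exactly the standard proof of this Mayer--Vietoris property. The paper itself offers no proof but defers to \cite[4.11]{Sharp}, and your argument is in substance the one behind that citation; the only cosmetic blemish is reusing the letter $Y$ for the strict pushout $\tilde X\cup_{X'}Y'$, and the closing aside that the common cokernel is ``$C_*(C;\gamma)$'' should be read degreewise, since the complement $C$ need not be a simplicial subset.
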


\begin{remark*}
  A homotopy pushout square of $G$-spaces is a square that, upon
  taking fixed points $({-})^K$ for any subgroup $K\subset G$, becomes
a homotopy pushout square of spaces.
\end{remark*}

\subsubsection*{Restriction of coefficients} 
We recall that Bredon homology and cohomology have 
restriction of coefficients. If $\mu$ is a covariant or
contravariant functor and $K$ is a subgroup of $G$, we can define 
a coefficient functor $\mu\vert_K$ from  $K$-sets to abelian groups given by
\[
  \mu\vert_K(S) = \mu(G\times_K S)\,.
\]
Then for any $K$-space $Y$, depending on the variance of $\mu$, we have
\begin{equation}
\label{equation: restriction-of-coefficients}
\begin{aligned}
\BredonHom{*}{K}{Y}{\mu\vert_K}
        &= \BredonHom{*}{G}{G\times_{K} Y}{\mu}\\
\BredonCoh{*}{K}{Y}{\mu\vert_K}
        &= \BredonCoh{*}{G}{G\times_{K} Y}{\mu}\,.
\end{aligned}
\end{equation}
\subsubsection*{Reduced Bredon homology} 
For a pointed $G$-space $(X, *)$ and a covariant coefficient system
$\mu$, there is a split monomorphism from the Bredon chains on $*$
to the Bredon chains on $X$. The homology groups of the quotient
complex are the reduced Bredon homology groups of $X$ with
coefficients in~$\mu$. They are denoted by
$\BredonHomRed{*}{G}{X}{\mu}$. Similarly, if $\mu$ is a
contravariant coefficient system, there is a split epimorphism from
the cochains on $X$ to the cochains on $*$, and the cohomology groups
of the kernel are the reduced Bredon cohomology groups of $X$, denoted
$\BredonCohRed{*}{G}{X}{\mu}$.

There is an isomorphism of Bredon homology groups
\[
\BredonHom{*}{G}{X}{\mu}
      \cong\BredonHomRed{*}{G}{X}{\mu}\oplus\BredonHom{*}{G}{*}{\mu}
\]
and a similar one for cohomology groups. There
are analogues of Lemmas~\ref{lemma: Bredon invariance}
and~\ref{lemma: Bredon gluing} for reduced Bredon homology and
cohomology. It follows that if $X$ is equivariantly contractible, then
the reduced Bredon homology and cohomology groups of $X$ vanish with
any coefficients.

\section{Mackey functors} 
\label{section: mackey functors}

To obtain our results, we need to work with Bredon homology theories
that behave well with respect to approximation by $p$-subgroups. It
turns out that the key property required is the presence of
transfers for finite covers of $G$-spaces.  To obtain well-behaved
transfers, we use coefficient functors that extend to Mackey functors. 
The first part of this section collects background
on Mackey functors; the second part discusses 
projectivity relative to a collection of subgroups, 
a key hypothesis for Theorem~\ref{theorem: main theorem}.

There are several equivalent definitions of Mackey functors in the
literature. We will follow the treatments of Dress~\cite{Dress} and
Webb~\cite{WebbSplit, Webb}.

\begin{definition} \label{definition: Mackey functor} 
  A \defining{Mackey functor (for $G$)} is a pair of additive functors
  $\coeff=(\gamma, \gamma^\natural)$ from the category of finite
  $G$-sets to abelian groups, satisfying the following conditions.
\begin{enumerate}
\item The functor $\gamma$ is covariant and the
  functor $\gamma^\natural$ is contravariant. 
\item The functors $\gamma$ and $\gamma^\natural$ agree on objects.
  Thus for every finite $G$-set $S$, we write
  $\coeff(S)\definedas\gamma(S)=\gamma^\natural(S)$.
\item \label{naturality} 
  If the diagram on the left below is a pullback diagram of $G$-sets,
  then the diagram on the right commutes.
\[
  \begin{CD}
     X      @>{a}>>  U     \\
    @V{f}VV         @V{g}VV\\
     Y      @>{b}>> V
  \end{CD}
\qquad\qquad\quad
  \begin{CD}
     \coeff(X) @> \gamma(a) >> \coeff(U)\\
    @A\gamma^\natural(f)AA   @A{\gamma^\natural(g)}AA\\
     \coeff(Y)@> \gamma(b) >> \coeff(V)
  \end{CD}
 \]
\end{enumerate}
\end{definition}

Because a Mackey functor has both a covariant and a contravariant
part, it provides coefficient systems for both Bredon homology and
Bredon cohomology. We denote the resulting homology and cohomology
groups by $\BredonHom{*}{G}{-}{\coeff}$ and
$\BredonCoh{*}{G}{-}{\coeff}$.

The following remark concerns extending a Mackey functor from finite
$G$-sets to arbitrary $G$-sets. Although we will only be interested in
Bredon homology of simplicial sets of finite type, we include the
remark for completeness.
\begin{remark}\label{remark: extension}
  There are two ways to extend a Mackey functor $\coeff=(\gamma,
  \gamma^\natural)$ from finite set to arbitrary sets. The first
  is to define
\[
\gamma^\natural(T)=\gamma(T)
         \definedas\underset{T_\alpha}{\colim} \gamma(T_\alpha)
\]
where $T_\alpha$ ranges over finite subsets of $T$. With this
definition, it is clear that $\gamma$ is functorial with respect to
all $G$-maps between $G$-sets. The contravariant functor
$\gamma^\natural$ is also functorial, but with respect to
\emph{finite} covers of $G$-sets.  Indeed, suppose $f\colon X \to Y$
is a finite cover of $G$-sets. Then $f^{-1}(-)$ defines a poset map
from finite subsets of $Y$ to finite subsets of $X$. One uses this to
define a map $\gamma^\natural(Y)\to \gamma^\natural (X)$. With this
definition, condition~\eqref{naturality} of
Definition~\ref{definition: Mackey functor} holds for an arbitrary
square diagram of $G$-sets, provided $f$ and $g$ are finite covers.

The other way to extend $\coeff$ is by the formula
\[
\gamma(T)=\gamma^\natural(T)
     \definedas\underset{T_\alpha}{\lim\,} \gamma^\natural(T_\alpha).
\]
With this definition, $\gamma^\natural$ remains contravariantly
functorial with respect to arbitrary $G$-maps, but
$\gamma$ is only functorial with respect to finite covers.
Condition~\eqref{naturality} of 
Definition~\ref{definition: Mackey functor} holds for an arbitrary
square diagram of $G$-sets, provided $a$ and $b$ (rather than $f$
and~$g$) are finite covers.
\end{remark}

For the remainder of this section we discuss projectivity of a Mackey
functor with respect
to a collection of subgroups.  Let $\coeff=(\gamma, \gamma^\natural)$
be a Mackey functor for $G$ and let $Z$ be a finite $G$-set. Then one
may define a new Mackey functor $\coeff_Z$ by the formula
$\coeff_Z(T)=\coeff(Z\times T)$.
Moreover, if $f\colon Z\to Y$ is an equivariant map of finite
$G$-sets, then $\gamma$ and $\gamma^\natural$ induce natural
transformations of Mackey functors $\coeff_Z\to \coeff_Y$ and
$\coeff_Y\to \coeff_Z$, respectively. In particular, taking $Y=*$ we
obtain natural transformations of Mackey functors $\theta_Z\colon
\coeff_Z \to \coeff$ and $\theta^Z\colon\coeff\to \coeff_Z$.

\begin{definition}
  A Mackey functor $\coeff$ is \defining{projective relative to $Z$} if
  $\theta_Z$ is a split surjection of Mackey functors.
\end{definition}

Recall that a \defining{collection} is a (necessarily finite) set of
subgroups of $G$ that is closed under conjugation.

\begin{definition} 
  Let $G$ be a finite group, let $\Ccal$ be a collection of subgroups
  of $G$, and let $\coeff$ be a Mackey functor for~$G$. We say that
  $\coeff$ is \defining{projective relative to $\Ccal$} if $\coeff$ is
  projective relative to $Z=\coprod G/H$, where $H$ ranges
  over a set of representatives of conjugacy classes of elements of
  $\Ccal$.
\end{definition}

The following routine lemma and corollary allow easier verification
that a Mackey functor is projective relative to a
collection~$\Ccal$, in particular relative to $\pgroupsof{G}$, the
collection of all $p$-subgroups of $G$.
(See Lemma~3.2 of~\cite{Webb}.)

\begin{lemma}
  A Mackey functor is projective relative to $\Ccal$ if
  and only if it is projective relative to the collection of maximal
  elements of~$\Ccal$.
\end{lemma}

\begin{corollary}
  Let $G$ be a finite group and let $P$ be a $p$-Sylow subgroup of
  $G$.  A Mackey functor is projective relative to $\pgroupsof{G}$ if
  and only if the natural map $\theta_{G/P}\colon \coeff_{G/P}\to
  \coeff$ is a split surjection of Mackey functors.
\end{corollary}

The following definition and lemma allow us to recognize Mackey
functors that are projective relative to~$\pgroupsof{G}$. In practice,
all of our examples satisfy this condition. 

\begin{definition} \label{definition: p-constrained} 
  Suppose that $\coeff$ is a Mackey functor for $G$. We say that
  $\coeff$ \defining{has the $p$-transfer property} if for every $G$-set
  $Z$ whose cardinality is prime to $p$, the composition
\[
\theta_{Z}\circ\theta^{Z}\colon \coeff {\to} \coeff_{Z} {\to}{\coeff}
\]    
is an isomorphism from $\coeff$ to itself.
\end{definition}

\begin{lemma}\label{lemma:p-transfer}
  If a Mackey functor $\coeff$ has the $p$-transfer property, then it
  takes values in $\integers_{(p)}$-modules, and it is projective
  relative to~$\pgroupsnone$.
\end{lemma}

\begin{proof}
  For the first assertion, let $Z$ be a set with trivial $G$-action. 
  The composed map 
  $\theta_{Z}\circ\theta^{Z}\colon 
        \coeff\longrightarrow\coeff_{Z}\longrightarrow\coeff$ 
  is multiplication by the cardinality of~$Z$. If we assume that the
  composition is an isomorphism for every $Z$ of cardinality prime
  to~$p$, it means exactly that $\coeff$ takes values in 
  $\integers_{(p)}$-modules.

  For the second assertion, take $Z=G/P$, where $P$ is a $p$-Sylow
  subgroup of $G$. Then 
  $\theta_{G/P}\circ\theta^{G/P}\colon \coeff\longrightarrow
 \coeff_{G/P}\longrightarrow{\coeff}$ is an isomorphism of Mackey
  functors, which implies that $\theta_{G/P}$ is a split surjection of
  Mackey functors. Hence $\coeff$ is projective relative to $p$-groups.
\end{proof}

\section{Approximations}
\label{section: background on approximations}

In this section we develop general tools for approximating a $G$-space
$X$ by other $G$-spaces whose Bredon homology or cohomology may be
easier to calculate.
First, we recall the notion of $\Ccal$-approximation used in~\cite{A-D} and
give a sufficient condition for this approximation to induce an
isomorphism on Bredon homology and cohomology
(Definition~\ref{definition: approximation}, 
Lemma~\ref{lemma: check isotropy groups}).  
Second, we give an explicit model for an approximation that involves only 
one conjugacy class of subgroups 
(Lemma~\ref{lemma:one-subgroup-approximation}).
Lastly, we observe that if $\coeff$ is a Mackey functor that is
projective relative to a \defining{family} $\Fcal$, then
$\Fcal$-approximation induces an isomorphism on Bredon homology and
cohomology with coefficients in $\coeff$. This allows us to use the
family of all $p$-subgroups of $G$ as a canonical jumping-off point
for approximations.  In Section~\ref{section: pruning}, we build on
this beginning and set up an inductive process to reduce the size of
the controlling collection from the family of all $p$-subgroups to a
manageable collection, without changing the Bredon homology or
cohomology.

Recall that for a $G$-space~$X$, we write $\Iso(X)$ for the collection
of subgroups of $G$ that appear as isotropy groups of~$X$. Let $\Ccal$
be a collection of subgroups of $G$. A $G$-space $X$ is said to have
\defining{$\Ccal$-isotropy} if $\Iso(X)\subseteq\Ccal$. A $G$-map
$X\to Y$ is a \defining{$\Ccal$-equivalence} if $f^K: X^K\to Y^K$ is
an equivalence for each $K\in\Ccal$.

\begin{definition}   \label{definition: approximation}
  Given a $G$-space $X$, a \defining{$\Ccal$-approximation} to $X$ is
  a $\Ccal$-equivalence $X_{\Ccal}\to X$ such that $X_{\Ccal}$ has
  $\Ccal$-isotropy.
\end{definition}

We review the construction of a functorial $\Ccal$-approximation,
which goes back to Elmendorf~\cite{Elmendorf}. Let $\Ocal_\Ccal$ be
the full subcategory of transitive $G$-sets and $G$-equivariant maps
whose objects have isotropy groups only in $\Ccal$, and let $i$ denote
the inclusion functor from $\Ocal_\Ccal$ to $G$-spaces.  The
$\Ccal$-approximation functor is the endofunctor of $G$-spaces
obtained by taking the identity functor, restricting it to
$\Ocal_\Ccal$, and then taking homotopy left Kan extension back to the
category of $G$-spaces.  In more concrete terms, $X_\Ccal$ can be
constructed as the homotopy coend 
$i \otimes_{\Ocal_\Ccal} \map_G(-, X)$. Here $\map_G(-, X)$ is the
contravariant functor from $\Ocal_\Ccal$ to spaces given by 
$S\mapsto \map_G(S, X)$. For more detail, see~\cite[4.8]{Homology}
and~\cite[Section 3]{A-D}.

It follows from the construction of $X_{\Ccal}$ that if $X$ 
has finite type, then $X_{\Ccal}$ does too. The functoriality of the
construction, together with Lemma~\ref{lemma:check-G-equivalence},
implies that $\Ccal$-approximations are unique up to a canonical zigzag of
$G$-equivalences. Further, $\Ccal$-approximation commutes with
homotopy pushouts of $G$-spaces.  The Mayer-Vietoris property
(Lemma~\ref{lemma: Bredon gluing}) thus implies that in order to
determine whether $X_{\Ccal}\rightarrow X$ induces an isomorphism on
Bredon homology or cohomology, it is enough to check this condition
for the orbits used in building~$X$.

\begin{lemma}       \cite[3.2 and 3.3]{A-D}
\label{lemma: check isotropy groups}
Suppose that $X$ is a $G$-space.  If, for all $K\in\Iso(X)$, 
the map $(G/K)_{\Ccal}\to G/K$ gives an
isomorphism on Bredon homology with coefficients in $\gamma$
(resp. cohomology with coefficients in $\gamma^\natural$), then
$X_{\Ccal}\to X$ gives such an isomorphism as well.
\end{lemma}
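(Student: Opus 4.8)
The plan is to induct up the skeletal filtration of $X$, using the two facts noted just above the statement: $\Ccal$-approximation commutes with homotopy pushouts of $G$-spaces, and a homotopy pushout square of $G$-spaces yields long exact Mayer--Vietoris sequences in Bredon homology and cohomology (Lemma~\ref{lemma: Bredon gluing}); homotopy invariance (Lemma~\ref{lemma: Bredon invariance}) and additivity of the coefficient functors are also used throughout.

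First I would fix the filtration $\Skel_0 X\subseteq\Skel_1 X\subseteq\cdots$ with $X=\colim_n\Skel_n X$, so that $\Skel_0 X=\coprod_\beta G/K_\beta$ with each $K_\beta\in\Iso(X)$, and for $n\ge 1$ there is a homotopy pushout square of $G$-spaces
\begin{CD}
\coprod_\alpha (G/K_\alpha)\times\partial\Delta^n @>>> \Skel_{n-1}X\\
@VVV @VVV\\
\coprod_\alpha (G/K_\alpha)\times\Delta^n @>>> \Skel_n X
\end{CD}
where $\alpha$ runs over the $G$-orbits of nondegenerate $n$-simplices and $K_\alpha\in\Iso(X)$ is the isotropy group of a chosen representative. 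Since $\Ccal$-approximation commutes with coproducts, and (because $\Delta^n$ and $\partial\Delta^n$ carry trivial $G$-action, as one checks on $K$-fixed points for $K\in\Ccal$) with the operations $(-)\times\Delta^n$ and $(-)\times\partial\Delta^n$ up to the canonical zigzag of $G$-equivalences, the spaces $(G/K_\alpha\times\Delta^n)_\Ccal$ and $(G/K_\alpha\times\partial\Delta^n)_\Ccal$ are identified with $(G/K_\alpha)_\Ccal\times\Delta^n$ and $(G/K_\alpha)_\Ccal\times\partial\Delta^n$. Hence the hypothesis that $(G/K)_\Ccal\to G/K$ is a $\gamma$-homology (resp.\ $\gamma^\natural$-cohomology) isomorphism for all $K\in\Iso(X)$, combined with homotopy invariance and the fact that Bredon homology (resp.\ cohomology) takes coproducts to direct sums (resp.\ products), shows that $\coprod_\alpha(G/K_\alpha)\times\Delta^n$ has the desired isomorphism; the $\partial\Delta^n$ case follows by running the same Mayer--Vietoris induction on the (finitely many) skeleta of the finite trivial complex $\partial\Delta^n$, reducing to the point case. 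In particular the base case $n=0$ is settled.

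For the inductive step, assume $(\Skel_{n-1}X)_\Ccal\to\Skel_{n-1}X$ induces an isomorphism on $\gamma$-homology (resp.\ $\gamma^\natural$-cohomology). Applying $\Ccal$-approximation to the square above produces a map of homotopy pushout squares, and Lemma~\ref{lemma: Bredon gluing} turns this into a map of Mayer--Vietoris long exact sequences; the comparison is an isomorphism on all corners except possibly $\Skel_n X$, so the five lemma forces $(\Skel_n X)_\Ccal\to\Skel_n X$ to be an isomorphism too.

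Finally, $X=\colim_n\Skel_n X$, and since the functorial construction of $\Ccal$-approximations (the remark after Definition~\ref{definition: approximation}) commutes with this filtered colimit, $X_\Ccal=\colim_n(\Skel_n X)_\Ccal$. Bredon chains commute with filtered colimits, so the homology statement is immediate. The one genuine obstacle is cohomology, which need not commute with the colimit: here I would invoke the Milnor exact sequence relating $H^*_\br(X;\gamma^\natural)$ to $\lim_n$ and $\lim^1_n$ of the groups $H^*_\br(\Skel_n X;\gamma^\natural)$ and apply the five lemma to the resulting two-row comparison, using that the approximation is already an isomorphism at each finite stage. (One could instead reduce at the outset to $X$ of finite type, but since $X$ may still have infinitely many skeleta the same $\lim^1$ argument is still what passes from the finite stages to $X$.)
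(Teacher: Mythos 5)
Your argument is correct and is exactly the route the paper intends: the paper only cites \cite[3.2 and 3.3]{A-D} and sketches the proof in the preceding paragraph (``$\Ccal$-approximation commutes with homotopy pushouts, so Mayer--Vietoris reduces the claim to the orbits used in building $X$''), and your skeletal induction with the five lemma, together with the stabilization/Milnor-sequence passage to the colimit, is that sketch carried out in full. No gaps.
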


Since $G/K\cong G\times_{K} *$, Lemma~\ref{lemma: check isotropy groups}
involves approximating a space induced up from a subgroup. 
There is a general lemma available for this purpose. 
If $K$ is a subgroup of $G$, consider the collection
\[
\Ccal\downto K \definedas \{H\,\vert\, H\in\Ccal \mbox{ and } H\subseteq K\}\,.
\]
The following elementary lemma, used with $Y=*$,  
is helpful in applying Lemma~\ref{lemma: check isotropy groups}.

\begin{lemma}\label{lemma:reduce-to-subgroup} \cite[2.12]{A-D}
  Let $K$ be a subgroup of $G$ and let $Y$ be a $K$-space. Then there is a
  canonical $G$-equivalence 
  $G\times_K \left(Y_{\Ccal\downto K}\right)\simeq
  \left(G\times_K Y\right)_{\Ccal}$.
\end{lemma}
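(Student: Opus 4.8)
The plan is to show that applying $G\times_K(-)$ to the $(\Ccal\downto K)$-approximation $Y_{\Ccal\downto K}\to Y$ produces a $\Ccal$-approximation to $G\times_K Y$; the asserted canonical equivalence then follows from the uniqueness of $\Ccal$-approximations up to canonical zigzag of $G$-equivalences (the remark after Definition~\ref{definition: approximation}, which rests on Lemma~\ref{lemma:check-G-equivalence}). So I must check that the $G$-map $G\times_K(Y_{\Ccal\downto K})\to G\times_K Y$ has source with $\Ccal$-isotropy and is a $\Ccal$-equivalence. Note first that $\Ccal\downto K$ really is a collection of subgroups of $K$: it is closed under $K$-conjugation because $\Ccal$ is closed under $G$-conjugation and $K$-conjugates of subgroups of $K$ stay inside $K$.

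For the isotropy statement, recall that for any $K$-space $Z$ the stabilizer in $G$ of a point $[g,z]\in G\times_K Z$ is $g\bigl(\mathrm{Stab}_K z\bigr)g^{-1}$, so $\Iso(G\times_K Z)$ consists of the $G$-conjugates of the subgroups in $\Iso(Z)$. Taking $Z=Y_{\Ccal\downto K}$, whose isotropy lies in $\Ccal\downto K\subseteq\Ccal$, and using that $\Ccal$ is conjugation-closed, we conclude that $G\times_K(Y_{\Ccal\downto K})$ has $\Ccal$-isotropy. For the equivalence statement, I would use the standard fixed-point formula: for a subgroup $H\subseteq G$ and a $K$-space $Z$ there is an isomorphism, natural in $Z$,
\[
   (G\times_K Z)^H\;\cong\;\coprod_{[g]}\, Z^{\,g^{-1}Hg}\,,
\]
the coproduct running over the double cosets $[g]\in H\backslash G/K$ with $g^{-1}Hg\subseteq K$, and arising from the fact that $[g,z]$ is $H$-fixed exactly when $g^{-1}Hg\subseteq K$ and $z\in Z^{g^{-1}Hg}$. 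The indexing set depends only on $H$ and $K$, and under this identification the map induced on $H$-fixed points by a $K$-map $Z\to Z'$ is the coproduct over the same set of the maps $Z^{g^{-1}Hg}\to (Z')^{g^{-1}Hg}$.

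Now fix $H\in\Ccal$. For every $g$ appearing in the indexing set, $g^{-1}Hg\in\Ccal$ by conjugation-closure and $g^{-1}Hg\subseteq K$, so $g^{-1}Hg\in\Ccal\downto K$; hence the summand map $(Y_{\Ccal\downto K})^{g^{-1}Hg}\to Y^{g^{-1}Hg}$ is an equivalence because $Y_{\Ccal\downto K}\to Y$ is a $(\Ccal\downto K)$-equivalence. Summing over the indexing set shows that $(G\times_K Y_{\Ccal\downto K})^H\to(G\times_K Y)^H$ is an equivalence for every $H\in\Ccal$, so the map is a $\Ccal$-equivalence. Together with the isotropy computation this makes $G\times_K(Y_{\Ccal\downto K})\to G\times_K Y$ a $\Ccal$-approximation, and comparing it with the approximation $(G\times_K Y)_\Ccal\to G\times_K Y$ via the cited uniqueness remark yields the canonical $G$-equivalence $G\times_K(Y_{\Ccal\downto K})\simeq (G\times_K Y)_\Ccal$. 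I expect the only real work to be bookkeeping in the fixed-point formula --- pinning down the double-coset indexing and verifying that the decomposition is natural enough to be compatible with the comparison map; the rest is formal.
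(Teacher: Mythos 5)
Your argument is correct and is the standard proof: the paper itself only cites \cite[2.12]{A-D} for this lemma, and your verification that $G\times_K(Y_{\Ccal\downto K})\to G\times_K Y$ has $\Ccal$-isotropy and is a $\Ccal$-equivalence (via the double-coset fixed-point formula $(G\times_K Z)^H\cong\coprod_{[g]} Z^{g^{-1}Hg}$), followed by uniqueness of $\Ccal$-approximations, is exactly the expected route. No gaps.
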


Next, we give an explicit description of the approximation for
a collection consisting of a single subgroup of $G$ together with 
its conjugates. We also record the Bredon homology and cohomology of the
approximation.
Let $D$ be a subgroup of $G$, and let $\Dcal$ be the collection
consisting of all conjugates of $D$ in $G$. Recall that the normalizer
$N_{G}(D)$ acts on the fixed point space $X^D$,
and also on the free contractible $W_{G}(D)$-space~$EW_{G}(D)$

\begin{lemma}\label{lemma:one-subgroup-approximation}
  With the above notation, let $F=X^D$. Then
  for any $G$-space $X$, the natural map
\[
  Y \definedas G\times_{N_{G}(D)} \left(EW_{G}(D)\times F\right) \to X
\]
  is a $\Dcal$-approximation $X_{\Dcal}\to X$. For any Bredon
  coefficient systems $\gamma$ and $\gamma^\natural$, there are natural
  isomorphisms
\begin{equation}  \label{equation: twisted groups}
    \begin{aligned}
    \BredonHom{*}{G}{X_{\Dcal}}{\gamma} 
             &\cong H_*(F_{hW_{G}(D)};\gamma(G/D))\\
    \BredonCoh{*}{G}{X_{\Dcal}}{\gamma^\natural} 
             &\cong H^*(F_{hW_{G}(D)};\gamma^\natural(G/D))\, 
    \end{aligned}
\end{equation}
\end{lemma}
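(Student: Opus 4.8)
The plan is to verify the two claims in
Lemma~\ref{lemma:one-subgroup-approximation} in sequence: first that
$Y=G\times_N(EW\times F)\to X$ is a $\Dcal$-approximation, and then
that its Bredon homology and cohomology are the twisted (co)homology
groups of $F_{hW}$ with the indicated constant coefficients.

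\emph{The approximation claim.} First I would check that $Y$ has
$\Dcal$-isotropy. Since $EW$ is a free $W$-space, a point of
$EW\times F$ has $N$-isotropy group contained in the kernel of
$N\to W$, namely $D$; and $D$ does fix $F=X^D$, so the
$N$-isotropy groups occurring in $EW\times F$ are exactly the
subgroups of $D$ that act trivially on the relevant simplex of
$F$. Inducing up to $G$, the $G$-isotropy groups of $Y$ are
$G$-conjugates of subgroups of $D$ that fix a point of $X^D$; but
any proper subgroup of $D$ fixing a simplex of $X^D$ has that
simplex fixed by a strictly larger subgroup, and the only way to
stay inside $\Dcal$ (which contains only conjugates of $D$ itself,
not proper subgroups) is — wait, here I should be careful: $\Dcal$
as defined is only the conjugates of $D$, so strictly speaking I
must instead argue that $Y\to X$ is a $\Dcal$-equivalence and that
$Y$ is built from cells $G/H$ with $H\in\Dcal$ only after a
cofibrant replacement, or simply invoke the uniqueness clause in the
remark after Definition~\ref{definition: approximation}. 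The cleanest
route: show (i) $Y^D\to X^D$ is an equivalence, and (ii)
$Y^K\simeq\varnothing$ for $K$ not subconjugate to $D$; then since
$\Dcal$-approximations are unique up to a zigzag of $G$-equivalences
and $Y$ plainly has isotropy subconjugate to $D$, any functorial
$X_{\Dcal}$ receives a $G$-equivalence from $Y$ (using
Lemma~\ref{lemma:check-G-equivalence}). For (i), compute
$Y^D=(G\times_N(EW\times F))^D$; since $D$ is normal in $N$ and $D$
acts trivially on $EW\times F$, the $D$-fixed points of the induced
space are $\coprod_{gN,\ g^{-1}Dg\subseteq D}(EW\times F)$, which —
using that $D=g^{-1}Dg$ forces $g\in N$ by taking orders — reduces to
the single summand $EW\times F$, and this maps by the identity on
$F=X^D$ via the $W$-equivariant collapse $EW\to *$; since $EW$ is
contractible this is an equivalence onto $X^D$. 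For (ii), a subgroup
$K$ not contained in a conjugate of $D$ cannot fix any point of $Y$,
by the isotropy computation above.

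\emph{The (co)homology identification.} Once $Y\to X$ is identified as
$X_{\Dcal}\to X$, I would compute the Bredon (co)homology of
$Y=G\times_N(EW\times F)$ directly. By restriction of coefficients
(equation~\eqref{equation: restriction-of-coefficients}, applied with
$K=N$), $H_*^{\br}(G\times_N(EW\times F);\gamma)\cong
H_*^{N,\br}(EW\times F;\gamma|_N)$, and similarly in cohomology. Now
$EW\times F$ is an $N$-space on which $D$ acts trivially (so it is
really a $W$-space) and which is $W$-free in the $EW$ direction. The
point is that on an $N$-space with trivial $D$-action, the Bredon
chain complex with coefficients in $\gamma|_N$ is, degreewise, the
free abelian group on $W$-orbits of simplices tensored against
$\gamma|_N(N/D)=\gamma(G\times_N N/D)=\gamma(G/D)$ — because the only
orbit type that occurs is $N/D$, and $\gamma|_N(N/D)$ is the value
there. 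That is exactly the complex computing the ordinary twisted
homology $H_*((EW\times F)_{hW};\gamma(G/D))$, with the $W$-module
structure on $\gamma(G/D)$ coming from the Weyl-group action. Since
$EW\times F\to F$ is a $W$-equivariant equivalence after passing to
homotopy orbits (again $EW$ contractible and free), $(EW\times
F)_{hW}\simeq F_{hW}$, giving the first isomorphism in
\eqref{equation: twisted groups}. The cohomology case is formally dual,
using that $\gamma^\natural$ turns coproducts into products, so the
Bredon cochain complex is the cochain complex computing
$H^*(F_{hW};\gamma^\natural(G/D))$.

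\emph{Main obstacle.} The genuinely delicate point is the
bookkeeping in the fixed-point computation $Y^D\simeq X^D$ and the
attendant identification of isotropy — in particular being careful
that $\Dcal$ contains only the conjugates of $D$ and not its proper
subgroups, so that one must lean on the uniqueness-up-to-zigzag of
$\Dcal$-approximations rather than claiming $Y$ literally has
$\Dcal$-isotropy on the nose. Everything after that — the two
restriction-of-coefficients isomorphisms and the identification of
the Bredon chain complex of a space with trivial $D$-action as the
twisted chain complex of its $W$-homotopy-orbit space — is
essentially formal, modulo keeping track of the $W$-module structure
on $\gamma(G/D)$ and $\gamma^\natural(G/D)$.
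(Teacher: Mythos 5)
Your proof is correct and follows essentially the same route as the paper's: verify that $\Iso(Y)\subseteq\Dcal$ and that $Y^D\simeq EW\times F\simeq X^D$, then identify the Bredon (co)chain complex with the twisted (co)chain complex of $F_{hW}$ using that every orbit occurring is of type $G/D$ and that $\gamma(S)\cong\integers[S^D]\otimes_{\integers[W]}\gamma(G/D)$ naturally for such $G$-sets (the paper states exactly this isomorphism; your detour through restriction of coefficients to $N$ is an equivalent packaging).

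One clarification on the point you flag as delicate: the worry about proper subgroups of $D$ appearing as isotropy is unfounded. The $N$-isotropy of a simplex of $EW\times F$ is contained in $D$ because $EW$ is $W$-free, and it contains $D$ because $F=X^D$ (so all of $D$ fixes every simplex of $F$); hence it equals $D$ exactly, and $\Iso(Y)$ consists precisely of conjugates of $D$, so $Y$ has $\Dcal$-isotropy on the nose. This matters because the fallback you sketch would not suffice: a $\Dcal$-equivalence from a space whose isotropy is merely subconjugate to $D$ need not be the $\Dcal$-approximation (one could attach a free orbit without changing $K$-fixed points for $K\in\Dcal$, yet change the Bredon homology), so the uniqueness-up-to-zigzag clause only applies once you know the isotropy is literally contained in $\Dcal$. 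Fortunately your own computation already establishes this.
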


The (co)homology groups on the right side of 
\eqref{equation: twisted groups} are the ordinary local coefficient
(co)homology groups associated to the natural action of $W$
on the coefficients through the action of $W$ on~$G/D$.

\begin{proof}[Proof of Lemma~\ref{lemma:one-subgroup-approximation}] 
  The verification that $Y\simeq X_{\Dcal}$ proceeds by checking the
  two conditions that characterize $X_{\Dcal}$. First, the orbit
  types: all isotropy groups of $Y$ are indeed conjugate to~$D$.
  Second, the fixed point spaces: $Y^D$ is $EW_{G}(D)\times F$, which is
  homotopy equivalent to~$X^D$, as required of~$X_{\Dcal}$. The homology and
  cohomology calculations then follow by
  Example~\ref{example:bredon-ordinary}.
\end{proof}

To close this section, we establish a canonical starting point for
approximation calculations when the Bredon coefficient system is
projective relative to a family. The case $X=*$ of the following
proposition can be read off as a special case 
of~\cite[Theorem A]{WebbSplit}. The case when $\Fcal$ is the family of
all $p$-subgroups of~$G$ is essentially~\cite[6.4]{Sharp}.

\begin{proposition}
 \label{proposition: family} 
\allpgroupstext
\end{proposition}

\begin{proof}
We first assert that if $H\in\Fcal$, then 
\[
G/H\times X_{\Fcal}   \to G/H \times X
\]
is actually a $G$-equivalence by Lemma~\ref{lemma:check-G-equivalence}.
This is because if $K$ is an isotropy group of either the source or the target,
then $K$ is conjugate to a subgroup of~$H$. Hence~$K\in\Fcal$, and
$\left(X_{\Fcal}\right)^{K} \to X^{K}$ is a homotopy equivalence
by definition of~$X_{\Fcal}$.

  Let $Z=\coprod G/H$, where $H$ ranges over a set of representatives
  of conjugacy classes in $\Fcal$. Since our Mackey functor
  $\coeff=(\gamma, \gamma^\natural)$ is projective 
  relative to $Z$, the homomorphisms induced on Bredon chains and
  cochains by the map $X_\Fcal \to X$ are retracts of the
  corresponding homomorphisms induced by the map $Z\times X_{\Fcal}\to
  Z\times X$. However, the latter map is a $G$-equivalence, and so 
  induces isomorphisms on Bredon homology and cohomology. 
\end{proof}

\section{Approximations controlled by smaller 
collections}
\label{section: pruning}

Recall that $\pgroupsof{G}$ is the family of all $p$-subgroups of a
fixed group~$G$. Let $X$ be a $G$-space. It follows from
Proposition~\ref{proposition: family} that if $\coeff$ is a Mackey
functor projective relative to $\pgroupsof{G}$, then the approximation
map $X_{\pgroupsof{G}}\rightarrow X$ induces isomorphisms on Bredon
homology and cohomology with coefficients in $\coeff$.
 
If $\Ccal\subseteq\pgroupsof{G}$ is a subcollection, then there is a
natural factoring $X_{\Ccal}\rightarrow X_{\pgroupsof{G}}\rightarrow X$.
We can ask if $X_{\Ccal}\rightarrow X_{\pgroupsof{G}}$ induces
isomorphisms on Bredon homology and cohomology with coefficients in~$\coeff$,
in which case the map $X_{\Ccal}\rightarrow X$ induces such
isomorphisms as well.
Our main result along these lines is 
Proposition~\ref{proposition: pruning criterion} below, which is 
an essential ingredient in the proof of Theorem~\ref{theorem: main theorem}.

Let $W$ be a group. Recall that $\pgroupsntof{W}$ is the poset of 
non-identity $p$-subgroups of $W$. Let $\lvert\pgroupsntof{W}\rvert$
be the nerve of this poset. Let $M$ be a $W$-module. 
Recall that $\pgroupsntof{W}$ is said to be homologically $M$-ample if the map
$\lvert\pgroupsntof{W}\rvert\to *$ induces an isomorphism on homology
with twisted coefficients in~$M$: 
\[
 H_*(\lvert\pgroupsntof{W}\rvert_{hW};M) \xrightarrow{\ \cong\ } H_*(BW;M).\\ 
\]
Similarly, we say that $\pgroupsntof{W}$ is cohomologically $M$-ample if
we have an isomorphism on cohomology with twisted coefficients in~$M$:
\[
 H^*(\lvert\pgroupsntof{W}\rvert_{hW};M) \xleftarrow{\ \cong\ } H^*(BW;M).
\]

The following lemma gives a useful criterion for when a group can be
removed from a collection without affecting Bredon homology. It is
similar to~\cite[Proposition 9.3]{Sharp}.  Note that for a subgroup
$D$ of~$G$, the Weyl group $W_G(D)$ acts by $G$-maps on the set $G/D$,
so $\gamma(G/D)$ is a $W_G(D)$-module.

\begin{lemma}\label{lem:take-away-one}
\takeawayonetext
\end{lemma}

The proof will appear after an auxiliary lemma.
If $\Ccal$ is a collection of subgroups of $G$, a subcollection
$\Dcal\subset\Ccal$ is \defining{initial} if whenever $D\in\Dcal$ and
$C\in\Ccal$ with $C\subset D$, then $C\in \Dcal$. Note that the union
of two collections is a collection.

\begin{lemma}
\label{lemma:fix-up-d}
Let $\Ccal$ be a collection and let $\Dcal\subset \Ccal$ be an initial
subcollection. Let $Y\to Z$ be a map of $G$-spaces that induces an
equivalence $Y_{\Ccal\setminus \Dcal}\to Z_{\Ccal\setminus \Dcal}$.
Then there is a homotopy pushout diagram of $G$-spaces
\begin{equation*}
\begin{CD}
       Y_{\Dcal} @>>> Y_{\Ccal} \\
         @VVV            @VVV   \\
       Z_{\Dcal} @>>> Z_{\Ccal}
\end{CD}\,.
\end{equation*}
\end{lemma}

\begin{proof}
  It is only necessary to check that for each subgroup $H\in\Ccal$,
  the indicated diagram becomes an ordinary homotopy pushout diagram
  upon taking $H$-fixed points, which is clear.  (Note that the spaces on
  the left have empty $H$-fixed sets for $H\in\Ccal\setminus \Dcal$,
  because $\Dcal$ is initial.) 
\end{proof}

\begin{proof}[Proof of Lemma~\ref{lem:take-away-one}]
The diagram of Lemma~\ref{lemma:fix-up-d} with 
$Y=\left(*\right)_{\Ccal\setminus \Dcal}$ and $Z=*$ gives the homotopy pushout diagram
\[
\begin{CD}
\left(*_{\Ccal\setminus\Dcal}\right)_{\Dcal} 
                           @>>> \left(*_{\Ccal\setminus\Dcal}\right)_{\Ccal} \\
         @VVV            @VVV   \\
       (*)_{\Dcal} @>>> (*)_{\Ccal}
\end{CD}\,.
\]
In the upper right corner, $\left(*_{\Ccal\setminus\Dcal}\right)_{\Ccal}$
is $G$-equivalent to $(*)_{\Ccal\setminus \Dcal}$ 
(Lemma~\ref{lemma:check-G-equivalence}).
Further, $\Dcal$ consists of just one conjugacy class, allowing us to use
Lemma~\ref{lemma:one-subgroup-approximation} to obtain 
explicit formulas for $\left(*_{\Ccal\setminus\Dcal}\right)_{\Dcal}$ and 
$\left(*\right)_{\Dcal}$. The result is the homotopy pushout diagram 
of $G$-spaces
\[
\begin{CD}
G\times_N\left(EW\times \left(*_{\Ccal\setminus\Dcal}\right)^{D}\right) @>>> (*)_{\Ccal\setminus\Dcal}\\
    @VVV                       @VVV\\
G\times_ N EW         @>>>     (*)_{\Ccal}
\end{CD}\,.
\]
In view of the Mayer-Vietoris property 
(Lemma~\ref{lemma: Bredon gluing}), the right vertical map is an
isomorphism on Bredon homology or cohomology if and only if the left
map is. By the second part of
Lemma~\ref{lemma:one-subgroup-approximation}, the Bredon (co)homology
of spaces on the left reduces to ordinary (co)homology with twisted
coefficients. The only remaining point to note is that the fixed point
set $((*)_{\Ccal\setminus\Dcal})^D$ is homotopy equivalent to
$\lvert\pgroupsntof{W}\rvert$ via a $W$-equivariant map.
This is proved in the third paragraph of~\cite[Pf.~of~8.3]{Homology}.
\end{proof}

\begin{corollary}\label{cor:prunealot}
  Let $\mu$ be a homological or cohomological coefficient system
  for~$G$. Let $\Ccal$ be a collection of $p$-subgroups of $G$ that is
  closed under passage to $p$-supergroups. Suppose that for each
  $p$-subgroup $D$ of $G$ with $D\notin \Ccal$, the collection
  $\pgroupsntof{W_G(D)}$ is $\mu(G/D)$-ample. Then the map
  $*_{\Ccal}\to *_{\pgroupsof{G}}$ induces an isomorphism on Bredon
  homology or cohomology with coefficients in~$\mu$.
\end{corollary}
\begin{proof}
  Use Lemma~\ref{lem:take-away-one} repeatedly to eliminate the
  conjugacy classes of elements of $\pgroupsof{W_G(D)}\setminus \Ccal$
  one by one, in such a way that smaller groups are removed before
  larger ones. In this way, all intermediate collections are closed
  under passage to $p$-supergroups, and at each step one removes a
  minimal element of the collection. Thus
  Lemma~\ref{lem:take-away-one} applies at each step.
\end{proof}
The following proposition is the main result of this section. Note that if $D\subset K \subset G$ then $W_K(D)$ acts on $G/D=G\times_K K/D$ by $G$-equivariant maps.
\begin{proposition}  \label{proposition: pruning criterion}
\pruningcriteriontext
\end{proposition}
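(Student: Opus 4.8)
The plan is to reduce the statement to the orbit-by-orbit criterion of Lemma~\ref{lemma: check isotropy groups} and then, for a single orbit $G/K$, build an explicit filtration of the pruning process. First I would fix $X$ and, by Lemma~\ref{lemma: check isotropy groups}, reduce to showing that for each $K\in\Iso(X)$ the map $(G/K)_\Ccal \to (G/K)_\pgroups$ is a $\Gamma^*_*$-isomorphism (using Proposition~\ref{proposition: all p-groups} to compare with $(G/K)_\pgroups \to G/K$). By Lemma~\ref{lemma:reduce-to-subgroup}, this reduces further to a statement purely internal to $K$: the map $*_{\Ccal\downto K} \to *_{(\pgroups\downto K)}$ must be a $\Gamma|_K$-isomorphism, where now $\pgroups\downto K$ is all $p$-subgroups of $K$ and $\Ccal\downto K$ is obtained by removing from it the $p$-subgroups $D\subset K$ with $D\notin\Ccal$, each of which is $\Gamma(G/D)$-superfluous in $K$.

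Next I would set up an induction that removes the bad subgroups one conjugacy class at a time, starting from the smallest and working up (this is legitimate because $\Ccal$, hence its complement in $\pgroups$, is closed under passage to $p$-supergroups, so the complement has a well-defined set of minimal elements). So it suffices to treat one step: let $\Ecal \subseteq \pgroups\downto K$ be a collection closed under $p$-supergroups whose minimal members form a single $K$-conjugacy class $[D]$ with $D$ a $\Gamma(G/D)$-superfluous subgroup, set $\Ecal' = \Ecal\setminus [D]$, and show $*_{\Ecal'} \to *_{\Ecal}$ is a $\Gamma|_K$-isomorphism. The complement $[D]$ inside $*_{\Ecal}$ is built precisely from orbits $K/D$, so its contribution is controlled by $\gamma(K/D)$ and $\gamma^\natural(K/D)$, i.e.\ by the $W=N_K(D)/D$-module $M=\Gamma(G/D)$. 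Concretely, I would use Lemma~\ref{lemma:one-subgroup-approximation} (applied to the $K$-space $*_{\Ecal}$, or to a suitable cofiber) to identify the relevant Bredon groups with twisted homology/cohomology of the Borel construction on the $D$-fixed points of the subspace spanned by the extra orbits; the point is that this fixed-point space is exactly the nerve $L$ of proper nontrivial $p$-subgroups of $W$ (a subgroup of $K$ containing $D$ with $D$ as its $p$-part up to conjugacy corresponds to a $p$-subgroup of $W$), and passing to $\Ecal'$ replaces $L_{hW}$ by $*_{hW}=BW$. Thus the gluing sequence of Lemma~\ref{lemma: Bredon gluing} plus the definition of $M$-superfluous (Definition~\ref{definition:superfluous}) forces $*_{\Ecal'}\to *_{\Ecal}$ to be a $\Gamma|_K$-isomorphism.

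I expect the main obstacle to be the bookkeeping in the middle step: carefully identifying the subspace of $*_{\Ecal}$ spanned by the newly-removed orbits, computing its $D$-fixed points, and matching the resulting $W$-space up to $W$-homotopy with the nerve $L$ of nontrivial $p$-subgroups of $W$ — together with checking that the relevant maps are compatible with \eqref{equation:key-map}. One has to be careful that "isotropy in $\Ecal$ with minimal class $[D]$" translates correctly into the poset of $p$-subgroups, that the induction terminates (finitely many conjugacy classes of $p$-subgroups), and that restriction of coefficients via \eqref{equation: restriction-of-coefficients} and Lemma~\ref{lemma:reduce-to-subgroup} is applied consistently so that the module appearing is genuinely $\Gamma(G/D)$ with its natural $W$-action. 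Once those identifications are in place, the Mayer--Vietoris argument and the definition of superfluous close the induction, and assembling over all $K\in\Iso(X)$ via Lemma~\ref{lemma: check isotropy groups} yields that $X_\Ccal\to X_\pgroups$, hence $X_\Ccal\to X_\pgroups\to X$, is a $\Gamma^*_*$-isomorphism.
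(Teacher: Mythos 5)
Your proposal follows essentially the same route as the paper: reduce to orbits via Lemma~\ref{lemma: check isotropy groups} and Lemma~\ref{lemma:reduce-to-subgroup}, induct by deleting one conjugacy class at a time starting with the minimal ones (so that intermediate collections stay closed under $p$-supergroups), and handle a single deletion step by combining the homotopy-pushout/Mayer--Vietoris comparison with the one-conjugacy-class approximation of Lemma~\ref{lemma:one-subgroup-approximation} and the identification of the relevant $D$-fixed point set with the nerve $L$ of nontrivial $p$-subgroups of $W$. The one point you flag as the main obstacle --- that $\bigl((*)_{\Ccal}\bigr)^D$ is $W$-equivalent to $L$ --- is exactly the point the paper also does not reprove, citing \cite[Pf.~of~8.3]{Homology}.
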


\begin{proof}
  By Lemma~\ref{lemma: check isotropy groups}, it is enough to check that
  for each $K\in \Iso(X)$ the map $(G/K)_{\Ccal}\to (G/K)_{\pgroupsnone}$ 
  gives an isomorphism on Bredon homology (or cohomology).  
  By Lemma~\ref{lemma:reduce-to-subgroup}, we need for
$G\times_{K}\left(*\right)_{\Ccal\downarrow K}
  \rightarrow G\times_{K}\left(*\right)_{\pgroupsnone\downarrow K}$ 
to induce an isomorphism. By definition of restriction of 
coefficients~\eqref{equation: restriction-of-coefficients}, 
this amounts to showing that 
$\left(*\right)_{\Ccal\downarrow K} \rightarrow 
         \left(*\right)_{\pgroupsnone\downarrow K}$ 
induces an isomorphism on Bredon homology or cohomology with coefficients 
in $\mu\restrict_K$. This follows by 
Corollary~\ref{cor:prunealot} with $K$ playing the role of $G$.
\end{proof}

There remains the question of how to establish ampleness.  The
following criterion is more or less standard and well known. 

\begin{proposition}
\label{proposition: algebraic pruning criterion}
\algebraicpruningtext
\end{proposition}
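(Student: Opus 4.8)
By Definition~\ref{definition:superfluous} the task is to show that the natural map $L_{hW}\to BW$ induces isomorphisms on $H_*(-;M)$ and $H^*(-;M)$. The plan is to combine a conical contraction of a fixed-point subposet of $L$ with the observation that triviality of a single order-$p$ element on $M$ propagates to a whole normal subgroup, and then to assemble these two facts following the template of \cite[6.3]{Homology}; the new ingredient relative to that reference is precisely that, without the triviality hypothesis on $M$, one cannot afford $L$ itself to fail to be contractible.

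Here is the geometric input. Put $C=\langle\sigma\rangle\subseteq W$, a subgroup of order $p$, and let $L^C$ be the subposet of non-identity $p$-subgroups of $W$ normalized by $C$, i.e.\ the fixed-point set of $C$ acting on $L$ by conjugation. If $Q\in L^C$ then $QC$ is again a non-identity $p$-subgroup normalized by $C$, and the relations $Q\le QC\ge C$ contract the poset $L^C$ onto its vertex $C$. Since $C$ is fixed by its normalizer $N_W(C)$ and the assignment $Q\mapsto QC$ commutes with conjugation by $N_W(C)$, this contraction is $N_W(C)$-equivariant; in particular the augmented simplicial chain complex of $L^C$ over $\integers_{(p)}$ is chain contractible as a complex of $\integers_{(p)}[N_W(C)]$-modules.

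Here is the algebraic input, and the assembly. For $w\in W$ the element $w\sigma w^{-1}$ sends $m\in M$ to $w\sigma(w^{-1}m)=w(w^{-1}m)=m$, because $\sigma$ fixes every element of $M=wM$; hence every $W$-conjugate of $\sigma$ acts trivially on $M$, the normal closure $T$ of $C$ in $W$ acts trivially, and $M$ is the inflation along $W\to W/T$ of a $\integers_{(p)}[W/T]$-module. To conclude, one uses the isotropy filtration of $L$: there is an isotropy spectral sequence with $E^1_{j,*}=\bigoplus_{[\tau]}H_*(W_\tau;M)$, the sum over $W$-orbits of $j$-simplices $\tau=(Q_0<\cdots<Q_j)$ with stabilizer $W_\tau=\bigcap_i N_W(Q_i)$, converging to $H_*(L_{hW};M)$; adjoining the augmentation term $H_*(W;M)$ records the comparison with $BW$. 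Since $M$ is $p$-local the required vanishing of the reduced, augmented version may be checked after restricting the $W$-action to a Sylow $p$-subgroup (transfer, as in the proof of Proposition~\ref{proposition: all p-groups}), and one then runs the argument of \cite[6.3]{Homology}: the $N_W(C)$-equivariant contracting homotopy of $L^C$ produced above, together with the fact that on $M$ the conjugates of $\sigma$ are indistinguishable from the identity, forces everything beyond the image of $H_*(BW;M)$ to die. The cohomology statement is dual, replacing chains by $\Hom_{\integers_{(p)}}(-,M)$.

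The main obstacle is that $C=\langle\sigma\rangle$ is not assumed normal in $W$. When it is normal, the contraction $Q\mapsto QC$ is $W$-equivariant and contracts all of $L$, so $L_{hW}\to BW$ is an equivalence for any coefficients — essentially \cite[6.3]{Homology}. In general $L$ need not be contractible at all, and the weight of the proof lies in transporting the purely local, $N_W(C)$-level contraction into a statement over all of $W$: one must see that the triviality of the $M$-action is exactly enough to make that local contraction kill all of $H_*(L_{hW};M)$ and $H^*(L_{hW};M)$ beyond what comes from $BW$, even though $L$ itself is never contracted.
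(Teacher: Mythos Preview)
Your proposal contains the right ingredients but does not actually carry out the step you yourself flag as ``the main obstacle.'' You produce an $N_W(C)$-equivariant contraction of the fixed subposet $L^C$, you observe that the normal closure $T$ of $C$ in $W$ acts trivially on $M$, and then you write ``one then runs the argument of \cite[6.3]{Homology}.'' But that \emph{is} the content of the proposition: the point of \cite[6.3]{Homology} is precisely to pass from the local contraction of $L^C$ to a global statement about $L_{hW}$, and you neither reproduce that passage nor indicate a substitute. Your transfer reduction to a Sylow $p$-subgroup $P$ of $W$ does not obviously help, since $L$ is still the poset of $p$-subgroups of $W$ (not of $P$), and $Q\mapsto QC$ is not defined on all of $L$ unless $C$ normalizes every $Q$. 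Your closing paragraph is an honest description of a gap, not a resolution of one.

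The paper's argument avoids this entirely. It treats \cite[6.3]{Homology} as a black box for the special coefficient module $\field_p[W/J]$, where $J=\ker(W\to\Aut(M))$; the hypothesis supplies an order-$p$ element in $J$, so that result applies. Shapiro's lemma converts the conclusion into the statement that $L_{hJ}\to BJ$ is an $\field_p$-homology isomorphism, and finite type upgrades this to a $\integers_{(p)}$-homology isomorphism. One then compares the Serre spectral sequences of $L_{hJ}\to L_{hW}\to B(W/J)$ and $BJ\to BW\to B(W/J)$ with twisted $M$-coefficients; since $J$ acts trivially on $M$, the map of fibers is an isomorphism on $E_2$, and the result follows. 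Note that your own observation about $T$ sets up exactly this kind of reduction (with $T$ in place of $J$), but you did not take the Shapiro/Serre route; had you done so, you would still have needed \cite[6.3]{Homology} to handle the fiber $L_{hT}\to BT$.
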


\begin{proof}
  The homology case when $M$ is an $\field_p$-module
  is~\cite[6.3]{Homology}. The cohomology case follows from
  Grodal~\cite[Corollary 5.4 and Theorem 9.1]{Grodal}. It is clear
  that Grodal's methods can be adapted to include the homological case
  as well. We will give a direct proof starting from the
  $\field_p$-module case.
   
  Let $\mykernel$ be the kernel of the action map $W\to\Aut(M)$.
  By \cite[6.3]{Homology}, the natural twisted coefficient
  homology map
\begin{equation*}  
H_{*}\left(\lvert\pgroupsntof{W}\rvert_{hW};\field_p[W/\mykernel]\right)
\rightarrow
H_{*}\left(BW;\field_p[W/\mykernel]\right)
\end{equation*}
is an isomorphism. By Shapiro's lemma, this is the same as the natural
map
\[
   H_*(\lvert\pgroupsntof{W}\rvert_{h\mykernel};\field_p)\to H_*(B\mykernel;\field_{p})\,.
\]
Since $\lvert\pgroupsntof{W}\rvert_{h\mykernel}$ and $B\mykernel$ are of finite type, we
conclude that the map $\lvert\pgroupsntof{W}\rvert\to *$ induces an isomorphism on homology
\begin{equation}\label{eq: mu iso}
H_*(\lvert\pgroupsntof{W}\rvert_{h\mykernel};\integers_{(p)})
           \xrightarrow{\cong} H_*(B\mykernel;\integers_{(p)})\,
\end{equation}
and a similar isomorphism on cohomology.
The proof is finished by comparing the Serre spectral sequences
for the following diagram, whose rows are fibrations:
\[
\begin{CD}
\lvert\pgroupsntof{W}\rvert_{h\mykernel} @>>> \lvert\pgroupsntof{W}\rvert_{hW} @>>> B(W/\mykernel)\\
@VVV @VVV @VVV\\
B\mykernel @>>> BW @>>> B(W/\mykernel) 
\end{CD}\,.
\]
The abutments of these two spectral sequences are the homology or
cohomology of the total spaces with twisted coefficients in~$M$, and
by~\eqref{eq: mu iso}, the vertical maps induce isomorphisms on the
$E^{2}$-pages. The proposition follows.
\end{proof}

\section{Fixed point spaces of $\Pcal_{n}$}
\label{section: fixed point sets}

In this section, we study the fixed point spaces of $p$-subgroups of
$\Sigma_{n}$ acting on $\Pcal_{n}$. To motivate this problem in the
current context, suppose that $X$ is a $G$-space and that $\Ccal$ is a
collection of subgroups of $G$ for which $X_{\Ccal}\rightarrow X$ is
known to be an isomorphism on Bredon (co)homology. If it happens
that $X^{H}\simeq *$ for all $H\in\Ccal$, then $X_{\Ccal}$ has
the $G$-equivariant homotopy type of a point. A Bredon
(co)homology isomorphism $X_{\Ccal}\rightarrow X$ would say that $X$
has the Bredon (co)homology of a point as well. Hence elements of~$\Ccal$
that have \emph{non}-contractible fixed point spaces on $X$ can be
considered obstructions to $X$ having the same Bredon 
(co)homology as a point. 

To apply this idea, recall that $\pgroupsof{\Sigma_{n}}$ denotes the family
of all $p$-subgroups of~$\Sigma_{n}$, and suppose $M$ is a Mackey
functor for $\Sigma_{n}$ that is projective relative
to~$\pgroupsof{\Sigma_{n}}$.  By Proposition~\ref{proposition: family},
\[
\left(\Pcal_{n}\right)_{\pgroupsof{\Sigma_{n}}}\to \Pcal_{n}
\]
induces an isomorphism on Bredon (co)homology with coefficients
in~$M$. Our goal in this section is to show that very few
$p$-subgroups of $\Sigma_{n}$ have non-contractible fixed point spaces on 
$\Pcal_{n}$. There are therefore very few obstructions to $\Pcal_{n}$
having the Bredon (co)homology of a point. 

\begin{definition}
If $H\subseteq\Sigma_{n}$, we say that $H$ is \defining{problematic} if
$\left(\Pcal_{n}\right)^{H}$ is not contractible. 
\end{definition}

Our main result in this section is the following proposition. 
The proof appears at the end of the section.

\begin{proposition} \label{proposition: bad subgroups} 
\badsubgroupstext
\end{proposition}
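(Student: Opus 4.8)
The plan is to show that if $H\subseteq\Sigma_n$ is a $p$-group for which $(\Pcal_n)^H$ is not contractible, then $H$ must be elementary abelian and act freely on $\n$. I would argue by contrapositive: assuming $H$ fails to be elementary abelian acting freely, I would exhibit a partition $\lambda_0\in(\Pcal_n)^H$ that is both a minimum or maximum for a suitable comparison, or more precisely, build an explicit contraction of the poset $(\Pcal_n)^H$ by a zigzag of natural transformations to a constant functor. The standard tool here is that a poset with a functorial "push toward a fixed object" admits a contracting homotopy: if there is an $H$-fixed partition $\lambda_0$ that is comparable (say, coarser) to every $H$-fixed partition, then $\lambda\leq \lambda\vee\lambda_0 = \lambda_0$ gives the contraction, provided $\lambda_0$ is proper and nontrivial so that it genuinely lies in $(\Pcal_n)^H$.

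First I would dispose of the case where $H$ does not act freely on $\n$: there is then a point $x\in\n$ with nontrivial stabilizer, so some nonidentity $h\in H$ fixes $x$. I would use the orbit structure of $H$ on $\n$ to produce a canonical nontrivial proper $H$-invariant partition — for instance, the partition into $H$-orbits is $H$-invariant, and if it is trivial ($H$ transitive) or improper (all singletons, i.e. $H$ trivial) I would instead use an intermediate invariant partition coming from a subgroup of $H$. Concretely, if $H$ is nontrivial but does not act freely, pick a nonidentity element $h$ and a subgroup $H'\leq H$ of order $p$ contained in $\langle h\rangle$; then the partition into $H'$-orbits has some blocks of size $p$ and some singletons (since $H'$ does not act freely if the stabilizer of $x$ meets $H'$), hence is proper and nontrivial and $H$-invariant — wait, it is $H'$-invariant but perhaps not $H$-invariant, so I would instead take the partition into $\langle h\rangle$-orbits where $h$ is chosen central in $H$ of order $p$ (possible since $H$ is a $p$-group), which is automatically $H$-invariant by centrality. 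If this partition is proper and nontrivial it is the desired cone point; if $h$ acts freely (all orbits size $p$) I would move on, and if $h$ acts trivially then $h=e$, contradiction. The remaining case is $H$ elementary abelian but not acting freely: again pick $h$ of order $p$ with a fixed point; the $\langle h\rangle$-orbit partition is $H$-invariant (since $H$ abelian), proper, and nontrivial (it has a singleton and, because $h\neq e$, a block of size $p$), giving the cone point.

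The remaining case to handle is $H$ acting freely but not elementary abelian — either $H$ is nonabelian, or $H$ is abelian but not of exponent $p$. Here I expect the main obstacle: I need an $H$-invariant proper nontrivial partition that is canonical enough to contract against, and freeness removes the easy "orbits of a central element" construction only in the sense that such orbit-partitions now have all blocks of equal size $p$, which is fine — they are still proper (since $|H|>p$ when $H$ is not elementary abelian of rank... actually when $H$ has order $>p$) and nontrivial. So if $H$ has order $>p$, pick $h\in Z(H)$ of order $p$; the $\langle h\rangle$-orbit partition is $H$-invariant, has all blocks of size $p$, hence is proper (as $p<n$ when $|H|\geq p^2$, using $|H|$ divides $n!$ and $H$ acting freely forces $|H|\mid n$, so $n\geq p^2$) and nontrivial, and serves as the cone point. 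The only configuration with no such central order-$p$ element of order $<|H|$ giving a proper partition is... $|H|=p$, which forces $H$ elementary abelian. Thus in every case where $H$ is not elementary abelian acting freely, I produce a canonical $H$-invariant proper nontrivial partition $\lambda_0$, and then $(\Pcal_n)^H$ is contractible via $\lambda\mapsto\lambda\vee\lambda_0$ (note $\lambda\vee\lambda_0$ is $H$-invariant and still proper since it is coarser than or equal to $\lambda_0$, which is proper — one must check join of $H$-invariant partitions is $H$-invariant, which is immediate). The hard part will be checking the edge cases of the join construction (ensuring $\lambda\vee\lambda_0$ stays proper and nontrivial, which may fail if $\lambda\vee\lambda_0$ becomes the trivial partition — but since $\lambda_0\leq\lambda\vee\lambda_0\leq$ trivial, and $\lambda_0$ proper, one gets $\lambda\vee\lambda_0$ lies strictly below trivial only if... actually $\lambda\vee\lambda_0$ could equal the trivial partition, so I would instead contract toward $\lambda_0$ using $\lambda\mapsto\lambda\wedge\lambda_0$ if $\lambda_0$ is chosen to be a *fine* invariant partition, namely the orbit partition of a central order-$p$ subgroup, giving $\lambda\wedge\lambda_0\leq\lambda_0$, proper, and nontrivial since $\lambda\wedge\lambda_0\geq$ the all-singletons partition but must have... hmm, $\lambda\wedge\lambda_0$ could be all-singletons). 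To sidestep this, I would contract by the zigzag $\lambda\geq\lambda\wedge\lambda_0\leq\lambda_0$, which works as long as $\lambda\wedge\lambda_0$ is nontrivial for all proper $\lambda$; if it is trivial for some $\lambda$ I would instead use $\lambda\leq\lambda\vee\lambda_0\geq\lambda_0$, and since any proper partition is comparable to $\lambda_0$ in at least one of these two senses via the zigzag, this finishes the contraction. Organizing this case analysis cleanly is the real work.
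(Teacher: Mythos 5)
Your overall strategy --- cone off $(\Pcal_n)^H$ onto the orbit partition $\lambda_0$ of a central order-$p$ subgroup $V=\langle h\rangle$ via $\lambda\le\lambda\vee\lambda_0\ge\lambda_0$ --- is exactly the paper's (the paper's ``minimal strongly $V$-fixed coarsening'' $\lambda_V$ is precisely $\lambda\vee\lambda_0$). But there is a genuine gap at the point you yourself flag and then wave away. The join $\lambda\vee\lambda_0$ is indiscrete exactly when $V$ acts transitively on the set of blocks of $\lambda$, and your proposed remedy --- use the meet-zigzag for those $\lambda$ and the join-zigzag for the others --- fails twice over. First, a contraction of the nerve requires a single natural transformation (or a fixed finite chain of them) defined on the whole poset; choosing different zigzags for different objects at best proves connectivity, not contractibility. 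Second, the claim that every $\lambda$ is reachable by at least one of the two zigzags is false: take $H=\integers/p\times\integers/p^2$ acting on itself by left translation ($n=p^3$, $H$ not elementary abelian, acting freely), let $h=(1,0)$ be your central element of order $p$, and let $\lambda$ be the partition into the $p$ cosets of $K=0\times\integers/p^2$. Then $\langle h\rangle$ permutes the $p$ blocks of $\lambda$ transitively, so $\lambda\vee\lambda_0$ is indiscrete, while $\lambda\wedge\lambda_0$ is the coset partition of $K\cap\langle h\rangle=\{0\}$, i.e.\ discrete. Both escape routes leave the poset simultaneously.

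The missing idea is that the central order-$p$ subgroup $V$ must be chosen more carefully, and that this is possible precisely because $H$ is not elementary abelian. The paper first proves a small group-theoretic lemma: if $H$ is a $p$-group that is not elementary abelian, then the kernel of $H\to H/p$ (the maximal elementary abelian quotient) meets the $p$-torsion of the center nontrivially, so one may take $V$ of order $p$, central, and contained in that kernel. With this choice, $V$ can never act transitively on the blocks of an $H$-fixed proper partition $\lambda$: transitivity would force $\lambda$ to have exactly $p$ blocks, and the action map $H\to\Sigma_p$ on blocks would be a homomorphism to a group whose $p$-subgroups are elementary abelian, hence would factor through $H/p$ and kill $V$ --- contradicting transitivity of $V$. (In the example above this amounts to replacing $(1,0)$ by $(0,p)$.) Only after this is secured does the join give a genuine functorial retraction onto a subposet with initial object $\lambda_0$. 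Your elementary-abelian-but-not-free case is salvageable as written (if $h$ has a fixed point in $\n$, then $\langle h\rangle$ fixes the block containing that point and so cannot be transitive on blocks, so the join always stays proper), but you do not verify this, and the non-elementary-abelian case cannot be closed without the kernel argument.
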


As a result, we conclude that very few subgroups of $\Sigma_{n}$ are
problematic. In fact, if $p^i\mid n$, then there is a unique (up to
conjugacy) elementary abelian $p$-subgroup of rank~$i$ in~$\Sigma_{n}$
that acts freely on~$\n$.  We study the centralizers of these few
problematic subgroups inside isotropy groups of $\Pcal_{n}$ in
Section~\ref{section: centralizers and involutions}, for the purpose of
eliminating them from the approximating collection.

For the proof of the key lemma below, we need a little more notation.
If $V\subseteq\Sigma_{n}$, let
$\left(\Pcal_{n}\right)_{\orb{V}}$ denote the poset of proper nontrivial
partitions of $\n$ whose classes are unions of $V$-orbits. Given a
partition $\lambda$ that is stabilized by the action of~$V$, let
$\orbit{\lambda}{V}$ denote the coarsening of $\lambda$ obtained by 
merging classes of $\lambda$ that contain elements in the same orbit of $V$.
Explicitly, $x\sim_{\orbit{\lambda}{V}}y$ if there exists $v\in V$
such that $x\sim_{\lambda}vy$. As a result, note that the equivalence 
classes of $\orbit{\lambda}{V}$ are unions of orbits of~$V$. 

\begin{lemma} \label{lemma: categorical retraction} 
  Let $H$ be a $p$-subgroup of $\Sigma_{n}$. Suppose that there exists
  a nontrivial subgroup $V$ of the center of $H$ with the property
  that for all proper partitions $\lambda$ that are fixed by ${H}$,
  the partition $\orbit{\lambda}{V}$ is proper. Then the nerve of
  $\left(\Pcal_{n}\right)^{H}$ is contractible.
\end{lemma}

\begin{proof}
Consider the inclusion into $\left(\Pcal_{n}\right)^{H}$ of the subposet 
whose objects are unions of $V$-orbits: 
\begin{equation}    \label{equation: intersection inclusion}
\left(\Pcal_{n}\right)_{\orb{V}}\cap \left(\Pcal_{n}\right)^{H}
\longrightarrow 
\left(\Pcal_{n}\right)^{H}. 
\end{equation}
We assert that this inclusion has a left adjoint. Indeed, if
$V$ is central in~$H$, then any partition $\lambda$ that is stabilized
by $H$ has the property that $\orbit{\lambda}{V}$ is also stabilized by~$H$. 
Provided that $\orbit{\lambda}{V}$ is always proper, a routine check 
shows that the functor
$\lambda\mapsto\orbit{\lambda}{V}$ is left adjoint to the 
inclusion~\eqref{equation: intersection inclusion}. 
Hence~\eqref{equation: intersection inclusion}
induces an equivalence on nerves. 
However, the left side has an initial object, namely
the partition of $\n$ by the orbits of~$V$, which is fixed by $H$
because $V$ is central in~$H$. Therefore
$\left(\Pcal_{n}\right)_{\orb{V}}\cap \left(\Pcal_{n}\right)^{H}$
has contractible nerve, which finishes the proof.  
\end{proof}

\begin{remark*}
Interpreting Lemma~\ref{lemma: categorical retraction}
in the case $n=p$ is slightly pedantic. The only $p$-subgroup of 
$\Sigma_{p}$ is $H=\integers/p$, and the available candidate for $V$ is 
$H$ itself. One proper 
partition is fixed by $V$, namely the discrete partition, 
but $V$ acts transitively on its classes. 
Hence the hypothesis of Lemma~\ref{lemma: categorical retraction}
is not satisfied, and we do not conclude that
$\left(\Pcal_{p}\right)^{H}$ is contractible. And, indeed,
$\left(\Pcal_{p}\right)^{H}$ is the empty set. 
\end{remark*}

To prove that $H$ is elementary abelian in
Proposition~\ref{proposition: bad subgroups}, we need a little group
theory. Given a $p$-group $H$, let $\Phi(H)$ be the Frattini subgroup
of~$H$, i.e., $\Phi(H)$ is the normal subgroup generated by commutators and
$p$-th powers.  Note that any homomorphism from $H$ to an elementary
abelian $p$-group factors through the quotient group~$H/\Phi(H)$. The
following lemma is standard.

\begin{lemma}    \label{lemma: kernel}
If $H$ is a $p$-group and is not elementary abelian, 
then there exists a subgroup $V\subseteq \Phi(H)$
such that $V$ has order~$p$ and is contained in the center of $H$. 
\end{lemma} 

\begin{proof}
  If $H$ is not elementary abelian, then $\Phi(H)$ is
  a nontrivial normal subgroup of~$H$, which necessarily has
  nontrivial intersection with the center because $H$ is a $p$-group. 
  We can then pick out an element of order~$p$ to generate~$V$.
\end{proof}

The following proposition now gives the group-theoretic structure
in Proposition~\ref{proposition: bad subgroups}. 

\begin{proposition}   \label{proposition: bad is elementary abelian}
If $H$ is a problematic $p$-subgroup of $\Sigma_{n}$, then $H$
is elementary abelian. 
\end{proposition}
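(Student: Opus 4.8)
The plan is to combine Lemma~\ref{lemma: categorical retraction} with the group-theoretic input of Lemma~\ref{lemma: kernel}, exploiting the basepoint of $\Pcal_n^\diamond$ only implicitly: the real content is about $(\Pcal_n)^H$. Suppose $H$ is a $p$-subgroup of $\Sigma_n$ that is \emph{not} elementary abelian; I want to show $(\Pcal_n)^H$ is contractible. The idea is to produce a nontrivial central subgroup $V\subseteq Z(H)$ that can never act transitively on the equivalence classes of an $H$-fixed partition, and then quote Lemma~\ref{lemma: categorical retraction}.

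First I would set up the candidate for $V$. By Lemma~\ref{lemma: kernel}, since $H$ is not elementary abelian, the composite $Z_p(H)\hookrightarrow H\epi H/p$ has a nontrivial kernel; let $V$ be that kernel (or any nontrivial subgroup of it). Then $V$ is a nontrivial elementary abelian subgroup of the center of $H$, and crucially every homomorphism from $H$ to an elementary abelian $p$-group kills $V$, since such a homomorphism factors through $H/p$. This is the structural fact I will feed into the transitivity argument.

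Next, the heart of the argument: I claim $V$ never acts transitively on the equivalence classes of an object $\lambda$ of $(\Pcal_n)^H$. Suppose to the contrary that $\lambda\in(\Pcal_n)^H$ has $V$ acting transitively on its set of blocks $B(\lambda)$. Because $\lambda$ is $H$-fixed, $H$ acts on the set $B(\lambda)$, and this action restricts to the transitive $V$-action. Since $V$ is central in $H$ and acts transitively on $B(\lambda)$, a standard orbit-stabilizer/centralizer argument shows the $V$-action on $B(\lambda)$ is actually free (the stabilizer of a block is $H$-invariant under conjugation because $V$ is central, hence is a subgroup of $V$ fixed by a transitive group, forcing it to be trivial), so $|B(\lambda)|=|V|$ and we may identify $B(\lambda)\cong V$ as a $V$-set. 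Then the $H$-action on $B(\lambda)$, being a permutation action commuting with a simply transitive abelian action, is given by the composite $H\to \Aut_V(V)\rtimes V$-type structure; more precisely the image of $H$ in $\Sigma_{B(\lambda)}$ normalizes the regular $V$-action, so we get $H\to \mathrm{Hol}(V)=V\rtimes\Aut(V)$, and composing with projection to $V$ (using that $V$ is in the center, the $\Aut(V)$-component is forced to be trivial on $V$... ) yields a surjection $H\to V$ extending the inclusion $V\hookrightarrow H$ up to the identity on $V$. Since $V$ is elementary abelian, this contradicts the defining property of $V$ as the kernel of $H\to H/p$ composed appropriately: we would have a splitting of $V\hookrightarrow H$, forcing $V$ to inject into $H/p$, contradicting $V\subseteq\ker(H\to H/p)$ and $V\neq 1$. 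The precise bookkeeping of the holomorph argument is the step I expect to be the main obstacle — one must be careful that $V$ being central forces the $\Aut(V)$-part to act trivially, so that the retraction $H\to V$ genuinely restricts to the identity on $V$. Once that contradiction is in hand, the hypothesis of Lemma~\ref{lemma: categorical retraction} is satisfied, its conclusion gives that $(\Pcal_n)^H$ is contractible, and the proposition follows by contraposition.
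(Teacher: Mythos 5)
Your overall strategy is exactly the paper's: use Lemma~\ref{lemma: kernel} to produce a nontrivial $V\subseteq Z_{p}(H)$ killed by $H\epi H/p$, show that $V$ can never act transitively on the blocks of an $H$-fixed partition, and then invoke Lemma~\ref{lemma: categorical retraction}. The one structural difference is that the paper takes $V$ to be cyclic of order $p$, which makes the transitivity step immediate: a transitive action of $\integers/p$ on a set with more than one element forces exactly $p$ blocks, the image of the $p$-group $H$ in $\Sigma_{p}$ is then cyclic of order $p$, hence elementary abelian, so $H\to\Sigma_{p}$ factors through $H/p$ and must kill $V$ --- contradicting transitivity. By allowing $V$ to be larger you are forced into the holomorph discussion that you yourself flag as the main obstacle, and that part of your write-up has two genuine soft spots.

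First, your freeness claim is not correctly justified and is false as stated: for an abelian $V$ acting transitively on $B(\lambda)$, all block stabilizers coincide and equal the kernel $S$ of $V\to\Sigma_{B(\lambda)}$, and nothing forces $S$ to be trivial; being ``fixed by a transitive group'' does not make a subgroup trivial. You only get that $V/S$ acts simply transitively, not that $|B(\lambda)|=|V|$. Second, the conclusion to aim for is not a retraction $H\to V$ splitting the inclusion (there is none if $S\neq 1$), but merely that the image of $H$ in $\Sigma_{B(\lambda)}$ is elementary abelian: that image centralizes the simply transitive abelian subgroup $V/S$, and the centralizer in $\Sigma_{B(\lambda)}$ of a regular abelian subgroup is that subgroup itself. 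Once the image of $H$ is known to be elementary abelian, $H\to\Sigma_{B(\lambda)}$ factors through $H/p$ and therefore kills $V$, while $V$ maps onto $V/S\neq 1$ --- the desired contradiction. With those two repairs your argument goes through; alternatively, you can avoid the holomorph bookkeeping entirely by shrinking $V$ to a subgroup of order $p$ at the outset, as the paper does.
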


\begin{proof}
  We prove the contrapositive. Suppose that $H$ is not elementary
  abelian, and let $V\subseteq \Phi(H)$ be the subgroup provided by
  Lemma~\ref{lemma: kernel}.  We want to apply 
  Lemma~\ref{lemma: categorical retraction} to show that $H$ is not
  problematic.

  Suppose $\lambda\in\left(\Pcal_{n}\right)^{H}$; we need to show that
  $\orbit{\lambda}{V}$ is proper. If 
  $\orbit{\lambda}{V}$ fails to be proper, then
  $V$ acts transitively on the equivalence classes of $\lambda$.  In
  this case, since $\lambda$ has more than one equivalence class, it
  must have exactly $p$ equivalence classes.  The action of $H$
  permutes the equivalence classes of $\lambda$, giving a homomorphism
  $H\rightarrow\Sigma_{p}$. However, $H\rightarrow\Sigma_{p}$ 
 necessarily factors
  through~$H/\Phi(H)$, because $H$ is a $p$-group and the only
  $p$-subgroups of $\Sigma_{p}$ are elementary abelian.  Since 
  $V\subseteq\Phi(H)$, this says that $V$ acts trivially on the
  classes of~$\lambda$, a contradiction. 

  We have established that for any $\lambda\in\left(\Pcal_{n}\right)^{H}$, 
  the partition $\orbit{\lambda}{V}$ is proper.  By 
  Lemma~\ref{lemma: categorical retraction},
  $\left(\Pcal_{n}\right)^{H}$ is contractible.
\end{proof}

\begin{proof}[Proof of Proposition~\ref{proposition: bad subgroups}]
  If $H$ is problematic, then we already know from
  Proposition~\ref{proposition: bad is elementary abelian} that $H$ is
  an elementary abelian $p$-group. We need to show that any element
  $h\in H$ acts freely on~$\n$. Let $V\cong\integers/p$ be the
  subgroup generated by~$h$. Because $H$ is problematic, the
  contrapositive of Lemma~\ref{lemma: categorical retraction} tells us
  that there exists a partition $\lambda$ in
  $\left(\Pcal_{n}\right)^{H}$ such that $V$ acts transitively on the
  equivalence classes of~$\lambda$. Since $\lambda$ is not the
  indiscrete partition, $\lambda$ must have exactly $p$ equivalence
  classes, freely permuted by $V$, which therefore acts freely on
  $\n$.
\end{proof}

\section{Isotropy in $\Pcal_{n}$}
\label{section: isotropy groups}
In this section we collect some elementary results about the isotropy
groups of the $\Sigma_{n}$-space~$\Pcal_{n}$. These results will be
used to produce centralizing elements of problematic subgroups of
isotropy groups.
 
We begin by setting our conventions regarding wreath products, because
these groups figure prominently in the isotropy groups of~$\Pcal_{n}$.
Suppose that $G$ is a group acting (on the left) on the set $\s$ and
that $H$ is any group. The wreath product $H\wr G$ is the semi-direct
product $H^{\s}\rtimes G$:
\[
1\longrightarrow H^{\s}\longrightarrow H\wr G\longrightarrow G
         \longrightarrow 1.
\]
We denote a general element of this group
by $(h_1, \ldots, h_{s}; g)$, and the group law is given by the
formula
\[
\left(h_1, \ldots, h_{s}; g\right)\cdot \left(h_1', \ldots, h_{s}'; g'\right)
          =\left(h_{1} h_{g^{-1}(1)}', \ldots,h_{s}h_{g^{-1}(s)}'; gg'\right).
\]
Accordingly, the formula for the inverse of an element is:
\[
\left(h_1, \ldots, h_{s}; g\right)^{-1}
      =\left(h_{g(1)}^{-1}, \ldots, h_{g(s)}^{-1}; g^{-1}\right).
\]
There is a natural group monomorphism 
\begin{equation}     \label{eq: diagonal inclusion}
\begin{array}{rcl}
H\times G  & \longrightarrow &H\wr G    \\
(h,g)      & \mapsto         &(h,\ldots, h; g). 
\end{array}
\end{equation}

\begin{definition}   \label{defn: special groups}
  We write $\Diag(H)$ for the image of $H\times\{e\}$ in $H\wr G$
  under~\eqref{eq: diagonal inclusion}.  If $T$ is a subgroup of~$G$,
  we write $\Twiggle$ for the subgroup of $H\wr G$ given by the image
  of $\{e\}\times T$ under~\eqref{eq: diagonal inclusion}.
\end{definition}

In particular, $\Gwiggle$ is the image of~$\{e\}\times G$
under~\eqref{eq: diagonal inclusion}. 
Note that $\Diag(H)$ and $\Gwiggle$ centralize each other in $H\wr G$. 

From Section~\ref{section: pruning}, we know that eliminating a
subgroup from an approximating collection requires thinking about
normalizers, and we begin with an elementary calculation for 
a special case of a normalizer in a wreath product. 

\begin{lemma}\label{lemma: wreath normalizer}
  Suppose that a subgroup $T\subseteq G$ acts transitively on~$\s$, and
let $N=N_{G}(T)$. Then
  the normalizer of $\Twiggle$ in $H\wr G$ is $\Diag(H)\times\Nwiggle$. 
\end{lemma}

\begin{proof}
An element $(h_1, \dots, h_s; g)\in H\wr G$ normalizes $\Twiggle$ 
if and only if for every $t\in T$ there exists $t'\in T$ such that
\[
\left(h_1, \dots, h_s; g\right)^{-1} \cdot \left(e, \ldots, e; t\right) 
     \cdot \left(h_1, \dots, h_s; g\right) 
              =(e, \ldots, e; t'). 
\]
Evaluating the left-hand side, we obtain
\begin{align*}
\left(h_{g(1)}^{-1}, \dots, h_{g(s)}^{-1}\,;\right. & \left. g^{-1}\right) 
 \cdot\left(e, \ldots, e\,; t\right) \cdot \left(h_1, \dots, h_s\,; g\right) \\ 
&= \left(h_{g(1)}^{-1}, \dots, h_{g(s)}^{-1}\,; g^{-1}t\right)  
    \cdot \left(h_1, \dots, h_s\,; g\right)\\
&= \left(h_{g(1)}^{-1}\,h_{t^{-1}g(1)}, 
            \,\dots\, , h_{g(s)}^{-1}\,h_{t^{-1}g(s)}\,; g^{-1}tg\right).
\end{align*}
This calculation allows us to verify that elements of
$\Diag(H)\times \Nwiggle$ normalize $\Twiggle$ in $H\wr G$, because 
if $\left(h_1, \dots, h_s; g\right)=(h,\dots,h;\,g)$ where
$g\in N_{G}(T)$, then 
the last line reduces to $\left(e,\dots,e\,;\,g^{-1}tg\right)$
with $g^{-1}tg\in T$. 

To see that a normalizing element of $\Twiggle$ must be in 
$\Diag(H)\times \Nwiggle$, observe that the calculation above implies that 
$\left(h_1, \dots, h_s; g\right)$ normalizes $\Twiggle$ only
if $g^{-1}tg\in T$ for all $t\in T$, so we must have $g\in N_{G}(T)$. Further,
$\left(h_1, \dots, h_s; g\right)$ normalizes $T$ only if for every
$i\in \s$ and $t\in T$, we have $h_{g(i)}=h_{t^{-1}g(i)}$. Since the
action of $T$ on $\s$ is transitive, it follows that
if $\left(h_1, \dots, h_s; g\right)$ normalizes $T$, then 
$h_1=h_2=\cdots=h_s$, so $\left(h_1, \dots, h_s\right)\in\Diag(H)$. 
The lemma follows. 
\end{proof}
 
Next we review the standard action of the wreath product on a 
product set. As above, suppose that $G$ acts on the set $\s$, and 
suppose also that $H$ acts on the set $\r$.
Then the wreath product $H\wr G$ acts on $\r\times \s$ as follows.
If $(i,j)\in \r\times\s$, then
 \[
 (h_1, \ldots, h_s; g)(i,j)=(h_{g(j)}(i), g(j)).
 \]
If we visualize $\r\times \s$ as $s$ columns, each containing the set~$\r$,
then $(h_1, \ldots, h_s; g)$ acts by first letting $g$ permute the set of
columns, and then (for each $i\in\s$) acting by $h_{i}$ on the $i$-th column. 
This action preserves the partition of $\r\times \s$ defined by the columns,
that is, by pre-images of points of $\s$ under the projection map 
\begin{equation}   \label{eq: product}
p\colon \r\times \s \to \s,
\end{equation}
and in fact $\Sigma_{\r}\wr \Sigma_{\s}$ is the full isotropy group of 
this partition. 
 
With these preliminaries in hand, we turn to the isotropy subgroups of
simplices of~$\Pcal_{n}$ under the action of~$\Sigma_{n}$. The
zero-dimensional simplices are partitions of $\n$. If $\lambda$ is
such a partition, we denote its isotropy group~$\isogroupof{\lambda}$.
By definition, elements of $\isogroupof{\lambda}$ are bijective maps
$\sigma\colon \n\to \n$ such that $x\sim_{\lambda}y$ if and only if
$\sigma x\sim_{\lambda}\sigma y$ for all~$x, y\in \n$.  Note that the
action of $\isogroupof{\lambda}\subseteq\Sigma_{n}$ on $\n$ induces
an action of $\isogroupof{\lambda}$ on~$\class{\lambda}$, the set of
equivalence classes (or ``components'') of~$\lambda$.

There is a special type of partition that will play an important role.
\begin{definition}
We say that a partition $\lambda$ of $\n$ is \defining{regular} if
the elements of $\class{\lambda}$ all have the same cardinality. 
\end{definition}

Suppose $\lambda$ is a regular partition of $\n$, with classes of
cardinality~$r$. We fix a class $\cclass\in\class{\lambda}$, and we
choose bijections between $\cclass$ and each of the other elements
of~$\class{\lambda}$.  These choices define a (non-canonical)
bijection and corresponding isomorphism:
\begin{align}   \label{eq: product}
\n \longleftrightarrow \cclass\times\class{\lambda}\\
\strut\isogroupof{\lambda}\cong\Sigma_{\cclass}\wr\Sigma_{\class{\lambda}}.
\end{align}
The composition of~\eqref{eq: product} with projection to $\class{\lambda}$
gives a map that is $\isogroupof{\lambda}$-equivariant: 
\[
\n\xrightarrow{\ \cong\ }\cbold\times\class{\lambda}
\xrightarrow{\ \, p \,\ }\class{\lambda}.
\]

Generalizing to partitions that are not regular, suppose that
$\lambda$ has classes of various cardinalities $r_1 \ldots, r_l$,
where $r_1 \ldots, r_l$ are pairwise distinct.  If there are $s_i$
classes of cardinality~$r_i$, then $n=r_{1}s_{1}+\cdots + r_{l}s_{l}$, and
$\isogroupof{\lambda}$ is (non-canonically) isomorphic to the
following product of wreath products:
\[
\left(\Sigma_{r_1}\wr \Sigma_{s_1}\right)
         \times\cdots\times 
         \left(\Sigma_{r_l}\wr\Sigma_{s_l}\right).
\]
From this formula, we get the following lemma. 

\begin{lemma}   \label{lemma: regular if transitive}
The action of $\isogroupof{\lambda}$ on $\class{\lambda}$ 
is transitive if and only if $\lambda$ is regular. 
\end{lemma}

\begin{proof}
If $\lambda$ has $s_i$ classes of cardinality~$r_i$, as above, then
the action of $\isogroupof{\lambda}$ on $\class{\lambda}$ factors 
through $\Sigma_{s_{1}}\times\dots\times\Sigma_{s_{l}}$. The 
action of the latter is only transitive if and only if $l=1$, 
i.e., $\lambda$ is regular. 
\end{proof}

Moving on to isotropy groups of higher-dimensional simplices of
$\Pcal_n$, a typical non-degenerate simplex is a chain
\[
\Lambda=\left(\lambda_{0}<\dots<\lambda_{j}\right)
\]
where $\lambda_{0},\dots,\lambda_{j}$ are partitions of~$\n$. 
We write $\isogroupof{\Lambda}$ for the subgroup of~$\Sigma_{n}$
that stabilizes all of the partitions $\lambda_{0},\dots,\lambda_{j}$, 
that is, $\isogroupof{\Lambda}$ is the isotropy group of $\Lambda$
as a simplex of the $\Sigma_{n}$-space~$\Pcal_{n}$. 
For each $i$, we know 
$\isogroupof{\Lambda}\subseteq\isogroupof{\lambda_{i}}$,
so there is an action of $\isogroupof{\Lambda}$ 
on $\class{\lambda_{i}}$ for each~$i$.

We need a few elementary notions for chains. First, 
the evident notions of restriction and isomorphism. 

\begin{definition}\mbox{}
\begin{enumerate}
\item If $\lambda$ is a partition of~$A$ and $X\subseteq A$, we
  define $\lambda\restrict_{X}$, the \defining{restriction of
    $\lambda$ to $X$}, as the partition of $X$ obtained by intersecting
  each equivalence class of $\lambda$ with $X$.
\item  If 
 $\Lambda=\left(\lambda_{0}\leq\dots\leq\lambda_{j}\right)$ is a chain
of partitions of~$A$, we write $\Lambda\restrict_{X}$ for the chain
$
\Lambda\restrict_{X}
  =\left(\lambda_{0}\restrict_{X}\leq\dots\leq\lambda_{j}\restrict_{X}\right).
$

\item Suppose given sets $A$ and $A'$, and chains of partitions
$\Lambda=\left(\lambda_{0}\leq\dots\leq\lambda_{j}\right)$ and
$\Lambda'=\left(\lambda'_{0}\leq\dots\leq\lambda'_{j}\right)$
of $A$ and~$A'$, respectively. 
We say that $\Lambda$ and $\Lambda'$
  are \defining{isomorphic} if there exists a bijection
  $f:A\rightarrow A'$ such that for all $i$ with $0\leq i\leq j$, we
  have $x\sim_{\lambda_{i}}y$ $\iff$ $f(x)\sim_{\lambda'_{i}}f(y)$.
\end{enumerate}
\end{definition}

Note that the restriction of a partition $\lambda$ can be discrete or
indiscrete even if $\lambda$ itself is neither. Likewise, the 
restriction of a strict inequality of partitions can be
an equality, so a nondegenerate chain 
(all strict inequalities) may become degenerate upon restriction.

In Section~\ref{section: centralizers and involutions}, subgroups 
of $\Sigma_{n}$ that have transitive actions
on the set of classes of a partition are of particular interest. 
To state the next lemma, which is the first step to understanding such 
situations, we need a little more notation. If 
$\Lambda= \left(\lambda_{0}<\lambda_{1}<...<\lambda_{j}\right)$
and $i\leq j$, we define the shorter chain of partitions
$\Lambda_{<\,i}=\left(\lambda_{0}<\lambda_{1}<...<\lambda_{i-1}\right)$.  
If $\cclass\in\class{\lambda_{i}}$, then we write 
$\left(\Lambda_{<\,i}\right)\!\restrict_{\,\cclass}$ for the restriction of 
$\Lambda_{<\,i}$ to~$\cclass$, and we write $K(\cclass)$ for the subgroup of
the symmetric group on $\cclass$ that stabilizes
$\left(\Lambda_{<\,i}\right)\!\restrict_{\,\cclass}$.

\begin{lemma}\label{lemma: transitive}
Let 
$\Lambda= \left(\lambda_{0}<\lambda_{1}<...<\lambda_{j}\right)$
be a chain of partitions of~$\n$ and let 
$\isogroupof{\Lambda}\subseteq\Sigma_{n}$ be its
isotropy group. Let $i$ be an integer with $0\le i\le j$. 
Then the action of $\isogroupof{\Lambda}$ on
$\class{\lambda_i}$ is transitive if and only if the following
two conditions hold:
\begin{enumerate}
\item \label{item: regularity assumption}
The partitions $\lambda_{i},\lambda_{i+1},\dots,\lambda_{j}$ are all regular. 
\item \label{item: isomorphism assumption}
For any $\cclass,\cclass'\in\class{\lambda_{i}}$, the chains
$\left(\Lambda_{<\,i}\right)\!\restrict_{\,\cclass}$ 
and $\left(\Lambda_{<\,i}\right)\!\restrict_{\,\cclass'}$ 
are isomorphic. 
\end{enumerate}
\end{lemma}

\begin{example}\label{example: chain}
Let $n=18$. Consider the following chain of partitions
$\Lambda=\left(\lambda_0< \lambda_1< \lambda_2\right)$:
\[\begin{small}
\begin{array}{cccc}
\!\!\lambda_2:
     & \{1,2,3,4,5,6\}
     &\!\!\{7,8,9,10,11,12\}
     &\!\!\{13,14,15,16,17,18\} \\[5pt]
\!\!\lambda_1:
     & \{1,2,3\}      \ \ \{4,5,6\}
     &\!\!\{7,8,9\}   \ \ \{10,11,12\}
     &\!\!\{13,14,15\}\ \ \{16,17,18\} \\[5pt]
\!\!\lambda_0:
     &\{1,2\}\{3\}   \ \{4,5\}\{6\}
     &\!\!\{7,8\}\{9\}   \ \{10,11\}\{12\}
     &\!\!\{13,14\}\{15\}\ \ \{16,17\}\{18\}
\end{array}
\end{small}
\]

We verify assumptions~\eqref{item: regularity assumption} 
and~\eqref{item: isomorphism assumption} of 
Lemma~\ref{lemma: transitive} for~$i=1$. 
First, the partitions $\lambda_{2}$ and $\lambda_{1}$ are regular, so 
\eqref{item: regularity assumption} is satisfied for~$i=1$. 
Second, $\Lambda_{<\,1}$ is just~$\lambda_{0}$, and the restriction of 
$\lambda_{0}$ to any class $\cclass\in\class{\lambda_{1}}$ consists of
a singleton and a two-element set. Hence restricting
$\Lambda_{<\,1}$ to elements of $\class{\lambda_{1}}$ gives
pairwise isomorphic partitions. According to Lemma~\ref{lemma: transitive}, 
the action of $\isogroupof{\Lambda}$ on $\class{\lambda_{1}}$ is transitive. 

And indeed, by inspection we have
$\isogroupof{\Lambda}
    \cong\left(\Sigma_{2}\times\Sigma_{1}\right)\,\wr\,
      \left(\Sigma_{2}\,\wr\,\Sigma_{3}\right)$. 
The action of $\isogroupof{\Lambda}$ on $\class{\lambda_{1}}$ 
is given by $\left(\Sigma_{2}\,\wr\,\Sigma_{3}\right)
       \subseteq\Sigma_{6}\cong\Sigma_{\class{\lambda_{1}}}$,
which is transitive. (See also Example~\ref{example: continued}.) 
\smallskip
\end{example}

\begin{proof}[Proof of Lemma~\ref{lemma: transitive}]

  Suppose $\isogroupof{\Lambda}$ acts transitively on
  $\class{\lambda_i}$. Then $\isogroupof{\Lambda}$ also acts
  transitively on the set of classes of
  $\lambda_{i+1},\dots,\lambda_{j}$, and hence these partitions are
  regular by Lemma~\ref{lemma: regular if transitive}.  Further,
  suppose that $\cclass,\cclass'\in\class{\lambda_i}$. The transitive
  action of $\isogroupof{\Lambda}$ on $\class{\lambda_i}$ gives an
  element $\sigma\in\isogroupof{\Lambda}$ taking $\cclass$
  to~$\cclass'$; then $\sigma$ induces the required isomorphism from
  $\left(\Lambda_{<\,i}\right)\!\restrict_{\,\cclass}$ to
  $\left(\Lambda_{<\,i}\right)\!\restrict_{\,\cclass'}$.

  Now let us prove the converse. 
First, we assert that for any $t$
  with $i\leq t\leq j$ and for any
  $\cclass,\cclass'\in\class{\lambda_{t}}$, we have isomorphic chains
  $\left(\Lambda_{<\,t}\right)\!\restrict_{\,\cclass}$ and
  $\left(\Lambda_{<\,t}\right)\!\restrict_{\,\cclass'}$. The point is
  that while assumption~\eqref{item: isomorphism assumption} only
  gives us such an isomorphism for~$t=i$, we can reach
  the same conclusion inductively for $t>i$ by using regularity of
  $\lambda_{i}$, $\lambda_{i+1}$,$\dots$,$\lambda_{j}$ 
  (assumption~\eqref{item: regularity assumption}). 

 To simplify notation for isotropy groups in what follows, 
  we write $\isogroupof{t}$ for the isotropy group of the chain
  $\left(\lambda_{0}<\lambda_{1}<...<\lambda_{t}\right)$.  
  We know 
  $\isogroupof{i}\supset\isogroupof{i+1}\dots
  \supset\isogroupof{j}=\isogroupof{\Lambda}$, 
  and our goal is to prove that $\isogroupof{\Lambda}$ acts
  transitively on~$\class{\lambda_{i}}$.  Our strategy is to first
  prove that $\isogroupof{i}$ acts transitively
  on~$\class{\lambda_{i}}$, and then we prove by induction on $t$ that
  $\isogroupof{t}$ likewise acts transitively on~$\class{\lambda_{i}}$
  for $i\leq t\leq j$.

  For the inductive hypothesis, we need to assume more about
  $\isogroupof{t-1}$ (for $t>i$) than simply a transitive action
  on~$\class{\lambda_{i}}$ (so we will need to verify the assumption
  explicitly for $K_{i}$ to get the base case). To state the hypothesis 
for $\isogroupof{t-1}$, let
  $\cclass,\cclass'$ be arbitrary elements of~$\class{\lambda_{i}}$,
  and let $C,C'\in\class{\lambda_{t-1}}$ be the unique classes such
  that $\cclass\subseteq C$ and $\cclass'\subseteq C'$.  We assume for
  the inductive hypothesis on $\isogroupof{t-1}$ that for any choice
  of $\cclass, \cclass'$, there exists an element
  $\sigma_{t-1}(\cclass,\cclass')\in\isogroupof{t-1}$ that is a
  bijection from $\cclass$ to~$\cclass'$, is a bijection from $C$ to
  $C'$, and is the identity on the complement of $C\sqcup\,C'$
  in~$\n$.

  To construct $\sigma_{i}(\cclass,\cclass')\in\isogroupof{i}$, the
  base case, recall that by assumption~\eqref{item: isomorphism
    assumption}, the chains
  $\left(\Lambda_{<\,i}\right)\!\restrict_{\,\cclass}$ and
  $\left(\Lambda_{<\,i}\right)\!\restrict_{\,\cclass'}$ are
  isomorphic.  This means there is a bijection $f:
  \cclass\rightarrow\cclass'$ that induces an isomorphism of
  $\left(\Lambda_{<\,i}\right)\!\restrict_{\,\cclass}$ to
  $\left(\Lambda_{<\,i}\right)\!\restrict_{\,\cclass'}$. We define
  $\sigma_{i}(\cclass,\cclass')\in\isogroupof{i}$ as $f$ on~$\cclass$,
  as $f^{-1}$ on $\cclass'$, and as the identity map on the complement
  of $\cclass\sqcup\cclass'$.

  For the inductive step, we need to construct
  $\sigma_{t}(\cclass,\cclass')\in\isogroupof{t}$.  Let
  $D,D'\in\class{\lambda_{t}}$ be the classes containing
  $\cclass,\cclass'$, respectively. Because $\lambda_{t-1}$ and
  $\lambda_{t}$ are both regular, $D$ and $D'$ are both constructed by
  merging the same number of classes of $\lambda_{t-1}$, say
  $D=C_{1}\sqcup\dots\sqcup\,C_{k}$ and
  $D'=C'_{1}\sqcup\dots\sqcup\,C'_{k}$ for classes
  $C_{1},\dots,C_{k},C'_{1},\dots,C'_{k}$ of~$\lambda_{t-1}$. We know
  from what was proved above that the restrictions of $\Lambda_{<\,t}$
  to any of the classes $C_{1},\dots,C_{k},C'_{1},\dots,C'_{k}$ are
  isomorphic.

   Now suppose that $\cclass\subseteq C_{1}$ and $\cclass'\subseteq C'_{1}$. We
   construct $\sigma_{t}(\cclass,\cclass')\in\isogroupof{t}$ as follows. On the
   complement of $D\sqcup D'$ in~$\n$, the element $\sigma_{t}(\cclass,\cclass')$
   acts by the identity.  
   On~$C_{1}\subseteq D$, use the bijection to $C'_{1}\subseteq D'$
   given by the inductive hypothesis.  On the other classes,
   $C_{2}$,\dots,$C_{k}$, use bijections to $C'_{2}$,\dots, $C'_{k}$,
   respectively, that are isomorphisms of the restrictions of
   $\Lambda_{<\,t}$ to each class.  Likewise, on $C'_{1}$,\dots,
   $C'_{k}$ use the inverses of those isomorphisms.  This completes
   the inductive step and finishes the proof.
\end{proof}

If, as in Lemma~\ref{lemma: transitive}, the isotropy subgroup of a
chain $\Lambda$ acts transitively on the equivalence classes of 
some $\lambda_{i}$ in the chain, then there is an explicit expression for the
isotropy group $\isogroupof{\Lambda}$ as a wreath product. To describe
this, we need another definition.

\begin{definition}
  Let $\mu$ and $\lambda$ be partitions of a set, with
  $\mu\leq\lambda$.  We define the 
\defining{partition of $\class{\mu}$ induced by~$\lambda$}, 
denoted $\lambdabar_{\mu}$, or simply $\lambdabar$ if $\mu$
is clear from context, by following equivalance relation:
 if $\cclass, \cclass'$ are 
equivalence classes of $\mu$, then 
\[
\cclass\sim_{\lambdabar_{\mu}}\,\cclass' \iff 
     \cclass\sqcup \cclass' \text{ is contained in a single equivalence class
  of } \lambda.
\] 
\end{definition}

For instance, in Example~\ref{example: chain}, we could consider 
$\lambda_{1}<\lambda_{2}$. Then $\lambdabar_2$, the partition of 
$\class{\lambda_{1}}$ induced by $\lambda_{2}$, is a partition of 
a six-element set into three subsets, each containing two elements:
\[
\lambdabar_2=\left\{
\begin{array}{c}
\{\{1,2,3\},\{4,5,6\}\}\\
\hspace{1.5em}\{\{7,8,9\},\{10,11,12\}\}\\
\hspace{.4em}\{\{13,14,15\},\{16,17,18\}\}.
\end{array}
\right.
\]

In the following lemma, and also in 
Section~\ref{section: centralizers and involutions}, 
we need a convention regarding
the extremes in a chain of partitions
$\Lambda= \left(\lambda_{0}<\lambda_{1}<...<\lambda_{j}\right)$. 
We adopt the convention that $\lambda_{-1}$ is the partition into 
singleton subsets (the discrete partition). 
Similarly, we interpret $\lambda_{j+1}$ as the partition containing
just one equivalence class (the indiscrete partition). 

\begin{lemma}\label{lemma: transitive gives wreath}
  Suppose that 
  $\Lambda= \left(\lambda_{0}<\lambda_{1}<...<\lambda_{j}\right)$, and
  suppose that $\isogroupof{\Lambda}$ acts transitively
  on~$\class{\lambda_{i}}$ for some $0\le i \le j$.  
  Let  $\Lambdabar_{>i}=\left(\lambdabar_{i+1}<\dots<\lambdabar_{j}\right)$
  be the induced chain of partitions of $\class{\lambda_{i}}$, and let
$K\left(\Lambdabar_{>i}\right)$ be the isotropy subgroup of $\Lambdabar_{>i}$
in~$\Sigma_{\class{\lambda_{i}}}$. Fix a class $\cclass\in\class{\lambda_{i}}$
and let $K(\cclass)\subseteq\Sigma_{\cclass}$ denote the isotropy 
group of 
$\left(\Lambda_{<\,i}\right)\!\restrict_{\,\cclass}$.
Then there is an isomorphism
\begin{equation}  \label{eq: iso to wreath}
\isogroupof{\Lambda}\cong K(\cclass)\wr
     K\left(\Lambdabar_{>i}\right). 
\end{equation}
\end{lemma}

\begin{proof}
We have a fixed class $\cclass\in\class{\lambda_{i}}$. By 
Lemma~\ref{lemma: transitive}, we know that for all 
$\cclass'\in\class{\lambda_{i}}$, the chains
$\left(\Lambda_{<\,i}\right)\!\restrict_{\,\cclass}$ 
and $\left(\Lambda_{<\,i}\right)\!\restrict_{\,\cclass'}$ 
are isomorphic. Hence for each $\cclass'\in\class{\lambda_{i}}$,
we can choose a bijection 
$\cclass\rightarrow\cclass'$ that respects the finer partitions
$\lambda_{0},$~\dots~$,\lambda_{i-1}$. Assembling these bijections
gives a bijection
\begin{equation}  \label{eq: n as product}
\n\longrightarrow\cclass\times\class{\lambda_{i}}.
\end{equation}
Under~\eqref{eq: n as product}, the chain $\Lambda$ of partitions of $\n$ 
corresponds to a chain of partitions of $\cclass\times\class{\lambda_{i}}$ 
as described below. The lemma will follow by identifying bijections from
$\cclass\times\class{\lambda_{i}}$ to itself that preserve the chain of 
partitions. 

We start in the middle of the chain. Subsets of $\n$ that are 
equivalence classes of
$\lambda_{i}$ correspond under \eqref{eq: n as product} precisely to
the columns of $\cclass\times\class{\lambda_{i}}$. Explicitly, if
$\cclass'\subseteq\n$ is an equivalence class of $\lambda_{i}$, then
the bijection $\cclass\rightarrow\cclass'$ used to define 
\eqref{eq: n as product} provides the correspondence between
the column $\{\left(x,\cclass'\right): x\in\cclass\}$ and 
$\cclass'\subseteq\n$. As indicated just after~\eqref{eq: product}, 
the isotropy group of $\lambda_{i}$ as a partition of 
$\cclass\times\class{\lambda_{i}}$ is then identified
by \eqref{eq: n as product} with 
$\Sigma_{\cclass}\wr\Sigma_{\class{\lambda_{i}}}$.  

Partitions finer than $\lambda_{i}$ take place within the columns of 
$\cclass\times\class{\lambda_{i}}$. To be specific, if $t<i$, we have
$\left(x,\cclass'\right)\sim_{\lambda_{t}}\left(y,\cclass''\right)$ if
and only if $\cclass'=\cclass''$ and $x\sim_{\lambda_{t}}y$.
We know that for any $\cclass'\in\class{\lambda_{i}}$, the bijection
$\cclass\rightarrow\cclass'$ induces an isomorphism from 
$\left(\Lambda_{<\,i}\right)\!\restrict_{\,\cclass'}$ to
$\left(\Lambda_{<\,i}\right)\!\restrict_{\,\cclass}$, and hence 
an isomorphism $K(\cclass)\xrightarrow{\cong} K(\cclass')$. 
As a consequence, the isotropy group of 
$\Lambda_{<\,i+1}$ regarded as a chain of partitions of 
$\cclass\times\class{\lambda_{i}}$ is isomorphic to 
$K(\cclass)\wr\Sigma_{\class{\lambda_{i}}}$.  

Finally, we note that coarsenings of~$\lambda_{i}$ are in one-to-one
correspondence with partitions of~$\class{\lambda_{i}}$. An element
$\sigma\in\Sigma_{n}$ stabilizes the coarsenings $\lambda_{i+1}<...<\lambda_{j}$
of $\lambda_{i}$ 
if and only if the image of $\sigma$ in $\Sigma_{\class{\lambda_{i}}}$ 
stabilizes~$\Lambdabar_{>i}$. We conclude that 
under~\eqref{eq: n as product}, the isotropy subgroup 
$\isogroupof{\Lambda}\subseteq\Sigma_{n}$ corresponds to 
$\isogroupof{\Lambda}\cong K(\cclass)\wr
     K\left(\Lambdabar_{>i}\right)$. 
\end{proof}

\begin{example}   \label{example: continued}
We continue with the setup of
Example~\ref{example: chain}. The action of
$\isogroupof{\Lambda}$ on $\class{\lambda_{1}}$ is transitive,
so we consider Lemma~\ref{lemma: transitive gives wreath}
applied with~$i=1$. 
The chain $\Lambdabar_{>1}$ consists simply of $\lambdabar_{2}$, which,
as mentioned just before Lemma~\ref{lemma: transitive gives wreath},
partitions the six-element set $\class{\lambda_{1}}$ into three
two-element subsets. Therefore
\[
K\left(\Lambdabar_{>1}\right)\cong \Sigma_{2}\,\wr\,\Sigma_{3}.
\]
Choose $\cclass=\{1,2,3\}\in\class{\lambda_{1}}$. Then $\lambda_{1}$
restricted to $\cclass$ is a singleton and a two-element set, so
$K(\cclass)=\Sigma_{1}\times\Sigma_{2}$. Thus the result of the
Lemma~\ref{lemma: transitive gives wreath} is what we found before 
by inspection:
\[
\isogroupof{\Lambda}\cong K(\cclass)\wr
     K\left(\Lambdabar_{>1}\right)
    \cong\left(\Sigma_{2}\times\Sigma_{1}\right)\,\wr\,
      \left(\Sigma_{2}\,\wr\,\Sigma_{3}\right). 
\]
Once again we note that the isomorphism of 
Lemma~\ref{lemma: transitive gives wreath} is not canonical. It
depends on a choice of a particular class $\cclass\in\class{\lambda_{i}}$ 
and of an isomorphism
\[
\n\cong \cclass\times \class{\lambda_{i}}.
\]
\end{example}
\bigskip

A situation of particular interest comes about when a subgroup 
$D\subseteq\isogroupof{\Lambda}$ acts transitively and {\it freely} 
on $\class{\lambda_i}$. 

\begin{lemma}\label{lemma: free and transitive}
  With the notation of Lemma~\ref{lemma: transitive gives wreath},
  suppose that a subgroup $D\subseteq\isogroupof{\Lambda}$ acts freely
  and transitively on $\class{\lambda_i}$.
  Then the isomorphism \eqref{eq: iso to wreath} identifies $D$ with 
  a transitive subgroup of $K\left(\Lambdabar_{>i}\right)$. If 
  $N=N_{K\left(\Lambdabar\right)}(D)$, then the normalizer of 
  $D$ in $\isogroupof{\Lambda}$ is identified by 
  \eqref{eq: iso to wreath} with the group
\[
\Diag(K(\cclass))\times \Nwiggle.
\]
In particular, $\isogroupof{\Lambda}$ contains a subgroup isomorphic to
$K(\cclass)$ that centralizes $D$.
\end{lemma}

\begin{proof}
  The free and transitive action of $D$ on $\class{\lambda_{i}}$
  allows us to choose a bijection between $\class{\lambda_{i}}$ and
  the underlying set of $D$ such that the given action of $D$ on
  $\class{\lambda_{i}}$ corresponds to the action of $D$ on its
  underlying set by left translation. This choice identifies $D$ with
  a transitive subgroup of $K\left(\Lambdabar\right)$.
The lemma's statement about the normalizer of $D$ is now a consequence of
Lemma~\ref{lemma: wreath normalizer}.
\end{proof}

The situation described in this lemma is illustrated in
Example~\ref{example: centric} below.

We need another result, in a similar spirit, that will be applied in
Section~\ref{section: centralizers and involutions} to subgroups
whose action on some $\class{\lambda_{i}}$ in $\Lambda$ 
is transitive but not free.
Let $D$ be an abelian group acting freely on $\n$, and let $\orbits{\n}{D}$
be the set of orbits of that action. Then the action of $D$ on~$\n$
extends canonically along the diagonal inclusion $D\hookrightarrow
D^{\orbits{\n}{D}}$ to an action of $D^{\orbits{\n}{D}}$ on~$\n$. 
Further, because $D$ acts freely on~$\n$, the action of 
$D^{\orbits{\n}{D}}$ on~$\n$ is faithful, that is, 
$D^{\orbits{\n}{D}}\rightarrow \Sigma_{n}$ is a monomorphism. 
Similarly, if $\orbits{\n}{D}\twoheadrightarrow \orbitsprime$ is a surjective
function of sets, then we obtain an inclusion
$D^{\orbitsprime}\hookrightarrow D^{\orbits{\n}{D}}$ that identifies
$D^{\orbitsprime}$ as a subgroup of $D^{\orbits{\n}{D}}$ via a diagonal
inclusion, giving us 
$D^{\orbitsprime}\subseteq D^{\orbits{\n}{D}}\subseteq\Sigma_{n}$. 
Note that it is necessary for the action of $D$ on~$\n$ to be free 
in order for $D^{\orbits{\n}{D}}$ to include into~$\Sigma_{n}$ as a subgroup.

Continuing to assume that $D$ is an abelian group acting freely on~$\n$, 
suppose that $S\subseteq D$ is a subgroup. Then 
$S^{\orbits{\n}{D}}\subseteq D^{\orbits{\n}{D}}$, so in particular
$S^{\orbits{\n}{D}}$ commutes with~$D\subseteq D^{\orbits{\n}{D}}$. 
Hence if $\orbits{\n}{D}\epi\orbitsprime$ is an epimorphism, 
we can define $D\oplus_{S} S^{\orbitsprime}$ to be the pushout 
in the category of abelian groups of the diagram
$D \leftarrow S \rightarrow S^{\orbitsprime}$. 
Then $D\oplus_{S} S^{\orbitsprime}$ is an abelian group with 
\[
D\subseteq \ D\oplus_{S} S^{\orbitsprime}\ 
 \subseteq \ D^{\orbitsprime}\ 
 \subseteq\ \Sigma_{n}.
\]
Note that if $S$ is a
nontrivial subgroup and $|\orbitsprime|>1$ then 
$D\oplus_{S} S^{\orbitsprime}\subseteq\Sigma_{n}$ 
strictly contains~$D$. In particular, suppose that
$\lambda$ is a partition stabilized by~$D$, and recall that
$\orbit{\lambda}{D}$ denotes the minimal coarsening of $\lambda$ whose
classes are unions of $D$-orbits. (See Section~\ref{section: fixed
  point sets}.)  There is an evident epimorphism
$\orbits{\n}{D}\epi\class{\orbit{\lambda}{D}}$, and we use it to define the
pushout $D\oplus_{S} S^{\class{\orbit{\lambda}{D}}}$.

With these preliminaries in place, we can state our next result.  

\begin{lemma}\label{lemma: transitive not free}
  Let $D\subseteq\Sigma_{n}$ be an abelian group acting freely on~$\n$, 
  let $S$ be a subgroup of~$D$, and let $\lambda$ be a 
  partition of $\n$ that is stabilized by~$D$. 
Then:
\begin{enumerate}
\item If $\mu$ is a refinement of $\lambda$ (or $\lambda$ itself) and
  is stabilized by $D$ then $\mu$ is stabilized by $D\oplus_{S}
  S^{\class{\orbits{\lambda}{D}}}$.
 \item If $\mu$ is a coarsening of $\orbits{\lambda}{S}$ that is
   stabilized by~$D$, then $\mu$ is stabilized by 
   $D\oplus_{S} S^{\class{\orbits{\lambda}{D}}}$. 
\end{enumerate}
\end{lemma}

\begin{proof}
In both cases $\mu$ is assumed to be stabilized by~$D$, so it is 
sufficient to prove that $\mu$ is stabilized 
by~$S^{\class{\orbits{\lambda}{D}}}$ in order to conclude that
$\mu$ is stabilized by $D\oplus_{S} S^{\class{\orbits{\lambda}{D}}}$. 
In fact, because $\orbits{\lambda}{S}\epi \orbits{\lambda}{D}$, we know 
that 
$S^{\class{\orbits{\lambda}{D}}}\subseteq S^{\class{\orbits{\lambda}{S}}}$, 
so it is sufficient to prove that $\mu$ is stabilized 
by~$S^{\class{\orbits{\lambda}{S}}}$. 

  Suppose that $\mu$ is a refinement of $\lambda$ and 
  is stabilized by $D$. 
  If $x\sim_{\mu}y$, then $x\sim_{\lambda}y$ also, and hence
  $x\sim_{\orbits{\lambda}{S}}y$, say
  $x,y\in{\bf{z}}$ where ${\bf{z}}\in\class{\orbits{\lambda}{S}}$.
  But the action of $S^{\class{\orbit{\lambda}{S}}}$ on ${\bf{z}}$
  factors through projection to the factor of S corresponding to~${\bf{z}}$.
  Since $S\subseteq D$ itself stabilizes~$\mu$ by assumption, we know
  that $\sigma x\sim_{\mu}\sigma y$ for any $\sigma\in S$, which
  finishes the proof that $S^{\class{\orbit{\lambda}{S}}}$
  stabilizes~$\mu$.

  Now suppose that $\mu$ is a coarsening of $\orbits{\lambda}{S}$.
  We know that $S^{\class{\orbits{\lambda}{S}}}$ not only 
  stabilizes~$\orbits{\lambda}{S}$, but actually acts trivially on the set of
  equivalence classes of~$\orbits{\lambda}{S}$. Hence 
  $S^{\class{\orbits{\lambda}{S}}}$ stabilizes any coarsening of 
  $\orbits{\lambda}{S}$, and in particular, stabilizes~$\mu$. 
  This finishes what is needed for the second statement of the
  lemma.
\end{proof}

\begin{example}
Consider the following partition
\[
\begin{array}{ccc}
\!\!\lambda:
     & \ \ \{1,2\}\{3,4\}\{5\}\{6\}\ \ 
     &\!\!\{7,8\}\{9,10\}\{11\}\{12\}. 
\end{array}
\]
Let $D\cong\integers/4$ be the subgroup of $\Sigma_{12}$ 
generated by the following product of cycles:
\[
\rho = (1,7, 3, 9)(2,8,4,10)(5,11,6, 12).
\]
Then $D$ acts freely on the set $\{1, \ldots, 12\}$, and $D$ preserves 
$\lambda$. The partition
$\orbit{\lambda}{D}$ is given by merging classes of $\lambda$
that contain elements of the same orbit of~$D$, so we have
\[
\orbit{\lambda}{D}:\ \  
\{1,2,3,4,7,8,9,10\}, \  \{5, 6, 11,12\}.
\]

Let $S\subset D$ be the subgroup isomorphic to $\integers/2$. The group $S$ 
is generated by $\rho^2$,
which is the following product of transpositions
\[
(1,3)(7,9)(2,4)(8,10)(5,6)(11,12).
\]
Accordingly, $D\oplus_S S^{\class{\orbits{\lambda}{D}}}$ is the
subgroup of $\Sigma_n$ generated by $D$ and the elements
$(1,3)(7,9)(2,4)(8,10)$ and $(5,6)(11,12)$.

The partition $\orbit{\lambda}{S}$ is given by merging classes of 
$\lambda$ that contain elements of the same orbit of~$S$, so we have
\[
\orbit{\lambda}{S}:\ \  
\{1,2,3,4\}, \ \{5,6\}, \ \{7,8,9,10\}, \  \{11,12\}.
\]

The reader is invited to check that $D\oplus_S S^{\orbit{\lambda}{D}}$
is an abelian group containing $D$ that preserves $\lambda$ and
$\orbit{\lambda}{S}$, as well as any $D$-invariant refinement of
$\lambda$ or coarsening of $\orbit{\lambda}{S}$. 
\end{example}

\section{Centralizers and involutions}
\label{section: centralizers and involutions}

Fix a prime~$p$. 
In Section~\ref{section: background on approximations}, we showed that
under some hypotheses the Bredon homology of a $G$-space $X$ can be
calculated using the approximation $X_{\pgroupsnone}$ of $X$.  Our
next goal is to further reduce the size of the approximating
collection in the special case $G=\Sigma_{n}$ and $X=\Pcal_{n}$. More
specifically, we would like to use the methods of
Section~\ref{section: pruning} to eliminate, as much as possible,
subgroups in $\pgroupsof{\Sigma_{n}}$ whose fixed points on
$\Pcal_{n}$ are not contractible (the ``problematic'' subgroups).
According to Proposition~\ref{proposition: bad subgroups}, these are
elementary abelian $p$-subgroups of $\Sigma_{n}$ that act freely
on~$\n$. 

In this section, we build on the group theory developed in
Section~\ref{section: isotropy groups} to study the centralizers of
problematic subgroups inside of isotropy groups of~$\Pcal_{n}$.
The main result is 
Proposition~\ref{proposition: group theory cases}, which gives us 
the algebraic data needed to eliminate 
problematic subgroups of $\Sigma_{n}$ that act
non-transitively on~$\n$. Thus we will conclude 
in Section~\ref{section: centralizers} that the only problematic
subgroups that must be included in the approximating collection are
the transitive ones. 

Throughout this section, let $D$ be an abelian $p$-group that acts
freely and non-transitively on~$\n$. (It is not necessary to assume
that $D$ is elementary.)  Let 
$\Lambda=\left(\lambda_{0}<\lambda_{1}<...<\lambda_{j}\right)$ 
be a simplex of $\Pcal_n$, and let
$K=\isogroupof{\Lambda}\subseteq\Sigma_{n}$ be the isotropy group of
$\Lambda$. Assume that $D\subseteq K$ (so $D$ stabilizes $\Lambda$).
As usual, let $C_{K}(D)$, $N_{K}(D)$, and $W_{K}(D)=N_{K}(D)/D$ denote the
centralizer, normalizer, and Weyl group of $D$ in~$K$, respectively.
Let $\coeff$ be a coefficient system for $\Sigma_n$ that takes values
in $\integers_{(p)}$-modules. By Proposition~\ref{proposition: pruning
  criterion}, in order to eliminate $D$ we need to show that the
collection $\pgroupsntof{W_{K}(D)}$ is $\coeff(\Sigma_n/D)$-ample.

The usual strategy is to use 
Proposition~\ref{proposition: algebraic pruning criterion}, for which
we need to show that $W_{K}(D)$ has an element of order $p$ that acts
trivially on $\coeff(\Sigma_n/D)$.  Typically such elements are found
in $C_{K}(D)/D$, and therefore we would like to know that $C_{K}(D)/D$
has elements of order $p$.  Subgroups that do not satisfy this
condition are called \defining{$p$-centric}. It turns out that in
``most'' cases $D$ is not $p$-centric in~$K$ (so it
can be eliminated using 
Proposition~\ref{proposition: algebraic pruning criterion}). This is
the first case of Proposition~\ref{proposition: group theory cases}.
However, in some important cases $D$ is $p$-centric
in~$K$, as in the following example.

\begin{example}\label{example: centric}
  Let $p=3$ and $n=18$.  Recall that a transitive elementary abelian
  $3$-group $\Delta_{2}\cong(\integers/3)^{2}$ of $\Sigma_{9}$ is
  given by the action of $(\integers/3)^{2}$ on its own elements by
  translation.  Let $\Lambda$ consist of a single
  partition~$\lambda_{0}$, which partitions $\n$ by the nine
  two-element sets $\{2i-1,2i\}$. Let $D\cong (\integers/3)^{2}$ be
  the diagonal embedding of $\Delta_{2}$ in $\Sigma_{18}$ as
  permutations of odd integers and even integers. That is, $D$ acts
  transtively on $\{1,3,\dots,17\}$ and on $\{2,4,\dots,18\}$ and
  satisfies $d(2i-1)+1=d(2i)$, so $D$ stabilizes $\lambda_{0}$ and
  acts transitively on its classes.

The isotropy group
$K=\isogroupof{\Lambda}$ is $\Sigma_2\wr\Sigma_9\subseteq\Sigma_{18}$, and
$D=\tilde{\Delta}_{2}$ (see Definition~\ref{defn: special groups}).
By Lemma \ref{lemma: wreath normalizer}, the normalizer of $D$ in $K$ is
  $N_{\isogroupof{}}(D)
          =\Sigma_2\times \Nwiggle$, where 
$N=N_{\Sigma_9}\left(\Delta_{2}\right)$, and by inspection the centralizer
is $C_{\isogroupof{}}(D) =\Sigma_2\times \Cwiggle$, where 
$C=C_{\Sigma_9}(\Delta_{2})$. 
However, $\Delta_{2}$ is self-centralizing in~$\Sigma_{9}$. Hence
$C_{\isogroupof{}}(D)/D\cong\Sigma_{2}$ and has no elements
of order~$3$, and $D$ is $3$-centric in~$\isogroupof{}$.
 
Even worse, the Weyl group of $D$ in $\isogroupof{}$ is
$W_{\isogroupof{}}(D) =\Sigma_2 \times \GL_2(\field_3)$.  This
means that $D$ is also ``$3$-radical'' in~$\isogroupof{}$,
i.e., the Weyl group has no nontrivial normal $3$-subgroups. As
a consequence, it is difficult to eliminate $D$ by standard methods.
\end{example}

We will show in Proposition~\ref{proposition: group theory cases} that
$p$-radical situations like Example~\ref{example: centric}
can occur only when the prime $p$ is odd, and that in this case
the Weyl group $W_{K}(D)$ has an odd involution that acts trivially on
the poset~$\pgroupsntof{W_{K}(D)}$. 
(In Example~\ref{example: centric}, the
involution comes from the factor $\Sigma_2$ of $\Sigma_2\times
\GL_2(\field_3)$.)  By assumption~\eqref{condition: involution} in
Theorem~\ref{theorem: main theorem}, such an involution will act by
$-1$ on $\coeff(\Sigma_n/D)$. It will turn out that the presence of
this involution implies that $\pgroupsntof{W}$ is ample for the
trivial reason that all the relevant homology and cohomology groups
vanish. (We learned from the referee that this type of argument is
sometimes referred to as ``center kills.'')

The following proposition is the main result of this section. It says
that either $D$ is not $p$-centric in $K$, or the centralizer of $D$
has an odd involution that acts trivially on the poset
$\pgroupsntof{W}$. The proof of the proposition occupies the remainder
of this section. By an ``odd involution'' we mean a permutation of
order~$2$ that can be written as a product of an odd number of
transpositions.

\begin{proposition} \label{proposition: group theory cases} 
  Let $K\in\Iso\left(\Pcal_{n}\right)\cup\{\Sigma_{n}\}$, and let
  $D\subseteq K$ be an abelian $p$-subgroup of $\Sigma_{n}$ that acts
  freely and non-transitively on $\n$.  Let $C_{K}(D)$ be the centralizer of
  $D$ in $K$. Then either
\begin{enumerate}
\item $p\mid [C_{K}(D):D]$, or 
\item $p$ is odd, and there is an odd involution in $C_{K}(D)$ that acts 
trivially on the poset of $p$-subgroups of the normalizer of $D$ in~$K$.  
\end{enumerate}
\end{proposition}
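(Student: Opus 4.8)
My plan is to reinterpret condition (1) as a Sylow statement and then to split into cases according to how $D$ permutes the blocks of the partition stabilized by $K$: in every case but one I will exhibit a genuine $p$-element of $C$ lying outside $D$, and the single remaining ``tight'' case will force $p$ odd and admit the required odd involution by hand. For the setup, note that since $D$ acts freely and nontransitively on $\n$, the set $\n$ is a disjoint union of $m\geq 2$ free $D$-orbits, so $C_{\Sigma_n}(D)\cong D\wr\Sigma_m$ with the given copy of $D$ sitting as the diagonal of $D^m$; in particular $D$ is normal in $C_{\Sigma_n}(D)$, hence in $C$ and in $N:=N_K(D)$. As $D$ is a normal $p$-subgroup of $C$, every Sylow $p$-subgroup of $C$ contains $D$, so condition (1) holds if and only if $C$ contains a nontrivial $p$-element outside $D$. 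When $K=\Sigma_n$ this is immediate, since $[C:D]=|D|^{m-1}\,m!$ is divisible by $p$ for $m\geq 2$. From now on $K$ is the stabilizer of a proper nontrivial partition $\lambda$ of $\n$; and we may assume $D\neq 1$, since for $D=1$ one has $C=K$, and either $p\mid|K|$, giving (1), or a transposition inside a block of $\lambda$ of size $\geq 2$ is an odd involution in $C$ that acts (vacuously) trivially on the $p$-subgroups of $N=K$.

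Now let $r$ denote the number of $D$-orbits of blocks of $\lambda$. \emph{Case (a): $r\geq 2$.} Writing $U_1,\dots,U_r$ for the corresponding $D$-invariant unions of blocks, translating $U_j$ by an element of $D$ for each $j$ independently defines a homomorphism $D^r\to C$ (each such permutation commutes with $D$ and preserves $\lambda$, as each $U_j$ is a union of whole blocks); it is injective because $D$ acts freely, and its image contains the ambient $D$ as the diagonal subgroup. Hence $[C:D]\geq|D|^{r-1}\geq p$ and (1) holds. \emph{Case (b): $r=1$, but $D$ does not act regularly on the blocks}, i.e.\ $E:=\mathrm{Stab}_D(B)\neq 1$ for a block $B$. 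Then $B$ is a disjoint union of $|B|/|E|=m\geq 2$ free $E$-orbits, so $C_{\Sym(B)}(E|_B)\cong E\wr\Sigma_m$; choose a $p$-element of it translating one $E$-orbit and fixing the others, transport it $D$-equivariantly over the single $D$-orbit of all blocks, and extend by the identity. One checks this lies in $C$, fixes every block setwise yet restricts on $B$ to an element not in $E$, hence lies outside $D$; so (1) holds. \emph{Case (c): $r=1$ and $D$ acts regularly on the set of $|D|$ blocks.} Then $\n\cong D\times T$ with $D$ acting on the first factor, $|T|=m\geq 2$, the blocks being the sets $\{d\}\times T$, and $K\cong\Sym(T)\wr\Sym(D)$ with $D$ embedded via its regular representation.

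It remains to treat case (c). A direct computation of the centralizer and normalizer of the regular representation of $D$ inside $\Sym(T)\wr\Sym(D)$ yields $C\cong\Sym(T)\times D$ and $N\cong\Sym(T)\times(D\rtimes\Aut(D))$, so $[C:D]=m!$. If $m\geq p$ then (1) holds. If $m<p$ --- which forces $p$ odd, as $m\geq 2$ --- then the permutation $t$ applying a fixed transposition of $T$ inside every one of the $|D|$ blocks is an odd involution in $C$: it is a product of $|D|$ transpositions, and $|D|$ is odd since $p$ is odd. Finally, because $m<p$ the group $\Sym(T)\cong\Sigma_m$ has no nontrivial $p$-subgroup, so under $N\cong\Sym(T)\times(D\rtimes\Aut(D))$ every $p$-subgroup of $N$ lies in the second factor, which $t$ centralizes; hence $t$ acts trivially on all $p$-subgroups of $N$ and (2) holds.

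The main obstacle I expect is the middle step: checking that the various explicit constructions really do cover everything except case (c) with $m<p$ --- this includes observing that $\lambda$ having two distinct block sizes forces $r\geq 2$, and verifying carefully that the permutations built in cases (a) and (b) lie in $C$ but not in $D$. The next most delicate point is the identification $C\cong\Sym(T)\times D$ and $N\cong\Sym(T)\times(D\rtimes\Aut(D))$ in case (c), i.e.\ computing the centralizer and normalizer of a regular representation inside a wreath product; granting that, the assertion that $t$ acts trivially on the $p$-subgroups of $N$ is a soft consequence of $m<p$.
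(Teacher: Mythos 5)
Your argument covers only the vertices of $\Pcal_n$, and that is a genuine gap. Since $\Pcal_n$ is the nerve of the partition poset, $\Iso(\Pcal_n)$ consists of stabilizers of \emph{chains} $\lambda_0<\cdots<\lambda_j$ of partitions, not just of single partitions, and chain stabilizers are genuinely new subgroups: for $n=8$ the stabilizer of the chain $12|34|56|78<1234|5678$ is $(\Sigma_2\wr\Sigma_2)\wr\Sigma_2$ of order $128$, which is not even the order of the stabilizer of any single partition of an $8$-element set. Your constructions do not survive the passage to chains. In case (b), the permutation that translates one $E$-orbit of a block $B$ by $e\in E$ and fixes the rest of $B$ pointwise need not preserve a finer partition in the chain, because an $E$-orbit need not be a union of $\lambda_0$-blocks. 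In case (c), the identifications $K\cong\Sym(T)\wr\Sym(D)$ and $C\cong\Sym(T)\times D$ fail for a chain: there $C\cong J\times D$, where $J\subseteq\Sym(T)$ is the subgroup preserving the restrictions of the finer partitions to one block, so $[C:D]=|J|$ can be prime to $p$ even when $m\geq p$, and an arbitrary transposition of $T$ repeated over all blocks need not lie in $K$ at all. The paper's proof is organized around exactly this difficulty: it takes the \emph{smallest} $i$ for which $D$ is transitive on $\cl{\lambda_i}$, produces order-$p$ elements from the kernel $S$ of $D\to\Sigma_{\cl{\lambda_i}}$ (these fix every class of every partition in the chain), and when $S=\{e\}$ builds the involution from a transposition $\{x,x'\}$ with $x\sim_{\lambda_0}x'$ conjugated around by $D$ --- the condition $x\sim_{\lambda_0}x'$ is precisely what forces the resulting element into $K$.

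Restricted to $K=\Sigma_n$ and to vertex stabilizers, your trichotomy matches the paper's dichotomy ($S\neq\{e\}$ versus $S=\{e\}$) and your centralizer and normalizer computations in case (c) are correct, so the skeleton is right; but extending it to chains is not cosmetic --- it is where most of the work in Section~\ref{section: isotropy subgroups} lives. One smaller slip: for $K=\Sigma_n$ the index $[C:D]=|D|^{m-1}\,m!$ is \emph{not} automatically divisible by $p$ when $D=\{e\}$ and $n<p$; that case also requires the transposition argument, which the paper treats separately.
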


\begin{proof}
Recall the following construction from the end of
Section~\ref{section: isotropy groups}. Suppose that 
$S$ is a subgroup of $D\subseteq\Sigma_{n}$ 
and $\lambda$ is a partition of~$\n$. We define
$D\oplus_{S} S^{\class{\orbits{\lambda}{D}}}$ as the pushout in the 
category of abelian groups of the diagram 
$D\leftarrow S\rightarrow S^{\class{\orbits{\lambda}{D}}}$. This 
pushout is an abelian subgroup of~$\Sigma_{n}$ that contains~$D$.
By Lemma~\ref{lemma: transitive not free}, 
$D\oplus_{S} S^{\class{\orbits{\lambda}{D}}}$ stabilizes
any $D$-invariant refinement of $\lambda$ and any $D$-invariant 
coarsening of~$\orbits{\lambda}{S}$. 

To prove the proposition, first
suppose that $K=\Sigma_{n}$, and let $\mu$ be the partition of $\n$
by the orbits of~$D$. Because $D$ does not act transitively on~$\n$, the 
partition $\mu$ has more than one equivalence class. 
Then $C_{K}(D)$ contains the 
subgroup~$D^{\class{\mu}}\cong D\oplus_{D}D^{\class{\orbits{\mu}{D}}}$, which 
is a $p$-subgroup of $\Sigma_{n}$ strictly containing~$D$. 
Hence $p\mid [C_{K}(D):D]$.

Now suppose $K\in\Iso\left(\Pcal_{n}\right)$ is the isotropy group of 
a nondegenerate chain of proper, nontrivial partitions, say
\[
\Lambda= \left(\lambda_{0}<\lambda_{1}<...<\lambda_{j}\right). 
\] 
By assumption, $D\subseteq K$ so $D$ stabilizes $\Lambda$.
For each $i$, the group $K$ acts on $\class{\lambda_{i}}$, 
and therefore so does $D$. Let $i$ be the
smallest number for which the action of $D$ on the set of classes of
$\lambda_i$ is transitive. (If $D$ does not act transitively even
on $\class{\lambda_{j}}$, then we interpret the argument that follows
with $i=j+1$, and we understand $\lambda_{j+1}$ to be the indiscrete
partition of~$\n$, consisting of just one equivalence class. Likewise, if
$i=0$, we use the convention that $\lambda_{-1}$ is the discrete 
partition of $\n$ into singleton sets.) 

Let $S\subset D$ be the subgroup that acts
trivially on the set of classes of $\lambda_i$, i.e.,
\[
S\definedas\ker\left(D\rightarrow\Sigma_{\class{\lambda_{i}}}\right). 
\]
There are two cases: \emph{(i)} $S$ is nontrivial, in which case it
turns out that $p\mid [C_{K}(D):D]$; or \emph{(ii)} $S=\{e\}$, in
which case it turns out that sometimes $p\mid [C_{K}(D):D]$, but if 
not, then $p$ is odd and there is an odd
involution in $C_{K}(D)$ that acts trivially on the poset of
$p$-subgroups of the normalizer of $D$ in~$K$.

Suppose first that $S$ is a nontrivial group.  Let 
$\mu=\orbits{\lambda_{i-1}}{D}$, that is, $\mu$ is the
finest mutual coarsening of the partition $\lambda_{i-1}$ and the
partition of $\n$ by the orbits of~$D$. 
Consider the group $D\oplus_S S^{\class{\mu}}$.
Since $D$ acts non-transitively on~$\class{\lambda_{i-1}}$ by
assumption, we know that $\mu$ has more than one equivalence
class, so $D\oplus_S S^{\class{\mu}}$ is an abelian $p$-group
that strictly contains~$D$. 
Lemma~\ref{lemma: transitive not free} implies immediately that
$D\oplus_S S^{\class{\mu}}$ stabilizes
$\lambda_{0}<\dots<\lambda_{i-1}$. 
Further, by construction $S$ acts trivially on~$\class{\lambda_{i}}$,
that is, the classes of $\lambda_{i}$ are unions of orbits of~$S$. 
Since $\lambda_{i}$ is also a coarsening of~$\lambda_{i-1}$, this 
means that $\lambda_{i}$ is a $D$-invariant coarsening 
of~$\orbits{\lambda_{i-1}}{S}$. Lemma~\ref{lemma: transitive not free} 
now tells us that $\lambda_{i}$ (as well as $\lambda_{i+1}$,\dots,
$\lambda_{j}$) is stabilized by~$D\oplus_S S^{\class{\mu}}$. That is,
$D\oplus_S S^{\class{\mu}}$ stabilizes all of~$\Lambda$, so 
$D\oplus_S S^{\class{\mu}}\subset K$. 
Therefore $p\mid [C_{K}(D):D]$ in this case.

Now suppose that $S$ is trivial. Then $D$ acts freely and transitively
on the classes of $\lambda_i$, and we are in the situation 
of Lemma~\ref{lemma: free and transitive}. Let $\cclass$ be a fixed class of
$\lambda_i$, and let $K(\cclass)$ be the isotropy group of the restriction
of $\Lambda$ to $\cclass$. Let 
$G=\im\left(K\hookrightarrow\Sigma_{\class{\lambda_{i}}}\right)$. 
Notice that since $D$ acts freely on $\class{\lambda_{i}}$, we 
can regard $D$ as a subgroup of~$G$. 
By Lemma~\ref{lemma: free and transitive}, 
$K\cong K(\cclass)\,\wr\,G$. In particular, $K$ 
contains a subgroup $\Diag(K(\cclass))$, isomorphic to $K(\cclass)$, that
centralizes $D$ and has trivial intersection with $D$. Thus $C_K(D)/D$
contains a subgroup isomorphic to $K(\cclass)$. If $p$ divides the order of
$K(\cclass)$ then $p\mid [C_{K}(D) : D]$, and we are done.

Suppose that $p$ does not divide the order of $K(\cclass)$.  Since $K$ acts
transitively on the classes of $\lambda_i$, by 
Lemma~\ref{lemma: transitive} the restriction of $\lambda_0$ to any
class of $\lambda_i$ is isomorphic to the restriction of $\lambda_0$
to~$\cclass$. By assumption, the partition $\lambda_0$ is not discrete, and 
therefore the restriction of $\lambda_0$ to $\cclass$ is not discrete.

It follows that there exist elements $x,y\in \cclass$ that
belong to the same class of $\lambda_0$. The transposition $(x\ y)$
that interchanges these two elements is an element of $K(\cclass)$. In
particular, the order of $K(\cclass)$ is always divisible by $2$, so
if $p\nmid |K(\cclass)|$, then $p>2$. Now let $\sigma$ be the image of
$(x\ y)$ under the diagonal embedding $K(\cclass)\hookrightarrow
K\cong K(\cclass)\,\wr\,G$.  The element $\sigma$ is a product of
disjoint transpositions, as many transpositions as there are elements
of $D$, which is a power of~$p$.  In particular the number of transpositions
is odd, so $\sigma$ is an odd involution.  The element $\sigma$ is in
$\Diag(K(\cclass))$ and therefore it centralizes $D$.

Moreover, by Lemma~\ref{lemma: wreath normalizer}, we have
$N_{K}(D)\cong K(\cclass)\times N_G(D)$. 
Since $p$ does not divide the order of $K(\cclass)$, every $p$-subgroup of
$K(\cclass)\times N_G(D)$ is contained in $N_G(D)$, and is centralized by
$K(\cclass)$. It follows that $\sigma$ centralizes every $p$-subgroup 
of~$N_{K}(D)$, and in particular
$\sigma$ acts trivially on the poset of such $p$-subgroups.
\end{proof}

\section{Eliminating Problematic Subgroups}
\label{section: centralizers}

We saw in Section~\ref{section: fixed point sets} that if $D$ is a
problematic $p$-subgroup of $\Sigma_{n}$ (i.e.,
$\left(\Pcal_{n}\right)^D$ is not contractible), then $D$ is an
elementary abelian $p$-group that acts freely on $\n$. In this
section, we introduce two conditions on a Mackey functor $\coeff$ for
$\Sigma_n$. In the main result of this section, 
Proposition~\ref{proposition: pruning proposition}, we show that
these conditions permit the use of
Proposition~\ref{proposition: pruning criterion} to discard the
problematic subgroups when they do not act transitively on $\n$. 
The conditions hold for the Mackey functors that we have in mind for 
applications. (See Section~\ref{section: our coefficients}.)

The $\Sigma_{n}$-spaces we need to approximate using the methods of
previous sections are $\Pcal_{n}$ and~$\ast$.  Hence throughout this
section we assume an ambient isotropy subgroup
$K\in\Iso\left(\Pcal_{n}\right)\cup\{\Sigma_{n}\}$.  We must show that
for any $K$ containing a problematic subgroup $D$, the ampleness
condition of Proposition~\ref{proposition: pruning criterion}
is met. To apply the methods of 
Section~\ref{section: pruning}, we need centralizing
elements that act trivially on coefficients. We need an appropriate 
characterization of the coefficients that will apply in our cases of
interest (Section~\ref{section: our coefficients})
and be sufficient to guarantee the existence of the needed elements.

Recall from Example~\ref{example: relevant-Mackey-functor} that our
prime examples of Mackey functors for applications are constructed by
means of certain homotopy functors applied to $S^n$.  Note that a
permutation $\sigma\in \Cen_{\Sigma_{n}}(D)$ induces a $D$-equivariant
map $\sigma_\sharp\colon S^n\to S^n$.  It turns out to be useful to
look at what happens when we pass to the general linear group by
embedding $\Sigma_{n}\hookrightarrow \GL_{n}\reals$ as permutations of
the standard basis.  Let $\Cen_{\GL_n\reals}(D)$ denote the
centralizer of $D$ in $\GL_{n}\reals$.  Notice that if
$\sigma\in\ker\left[ \Cen_{\Sigma_{n}}(D)\rightarrow
  \pi_{0}\Cen_{\GL_n\reals}(D)\right]$, then the map
$\sigma_\sharp\colon S^n\to S^n$ is actually $D$-equivariantly
homotopic to the identity map, which is a good sign for trivial action
on our coefficients.

\begin{definition}\label{definition: centralizer condition}
  We say that $\coeff$ \defining{satisfies the centralizer condition for
    $D$} if the kernel of $\Cen_{\Sigma_{n}}(D)
  \rightarrow\pi_{0}\Cen_{\GL_n\reals}(D)$ acts trivially on
  $\coeff\left(\Sigma_{n}/D\right)$.
\end{definition}

As discussed in Section~\ref{section: our coefficients}, our primary
examples of Mackey functors satisfy the centralizer condition.  The
centralizer condition allows us to use 
Proposition~\ref{proposition: algebraic pruning criterion} to
eliminate problematic groups that are not $p$-centric in the ambient
isotropy group~$K$. 

In the following definition, ``odd involution'' means an odd permutation 
of order $2$. 

\begin{definition}     \label{definition: involution condition}
  Let $p$ be an odd prime.  We say that $\coeff$ \defining{satisfies
  the involution condition for $D$} if any odd involution 
  in $\Cen_{\Sigma_{n}}(D)$ acts on $\coeff\left(\Sigma_{n}/D\right)$
  by multiplication by $-1$. 
\end{definition}
The involution condition enables us to eliminate problematic subgroups
in the few cases when they happen to be $p$-centric. 

\begin{definition} \label{definition: Mackey functor condition} 
  We say that the Mackey functor $\coeff$ \defining{satisfies the
    centralizer condition} (resp. \defining{satisfies the involution
    condition}) if it satisfies the corresponding condition in
  Definition~\ref{definition: centralizer condition}
  (resp. Definition~\ref{definition: involution condition}) for all
  elementary abelian $p$-subgroups of $\Sigma_{n}$ that act freely and
  non-transitively on $\n$.
\end{definition}
Examples of Mackey functors that satisfy both the centralizer and the
involution conditions are given in 
Section~\ref{section: our coefficients}.

Our main result in this section is the following proposition. 
The term ``ample'' was
defined at the beginning of Section~\ref{section: pruning}. 

\begin{proposition} \label{proposition: pruning proposition}
Let $D\subset\Sigma_{n}$ be an elementary abelian $p$-subgroup
that acts freely and non-transitively on $\n$, and let $\coeff$ be a
Mackey functor for $\Sigma_n$ taking values in $\integers_{(p)}$-modules. 
Assume that 
\begin{itemize}
\item $\coeff$ satisfies the centralizer condition for $D$, and
\item if $p$ is odd, $\coeff$ satisfies the involution condition for $D$. 
\end{itemize}
Then for any $K\in\Iso\left(\Pcal_{n}\right)\cup\{\Sigma_{n}\}$ 
such that $D\subseteq K$, we have that
$\pgroupsntof{W_K(D)}$ is $\coeff\left(\Sigma_n/D\right)$-ample. 
\end{proposition}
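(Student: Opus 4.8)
The plan is to reduce Proposition~\ref{proposition: pruning proposition} to the algebraic pruning criterion (Proposition~\ref{proposition: algebraic pruning criterion}) in the ``good'' case, and to a direct vanishing argument in the ``bad'' case, using the group-theoretic dichotomy of Proposition~\ref{proposition: group theory cases} to decide which case we are in. Fix $D$ and $K$ as in the statement, let $N=N_K(D)$, $W=N/D$, $L$ the nerve of the poset of nontrivial $p$-subgroups of $W$, and set $M=\Gamma(\Sigma_n/D)$, viewed as a $\integers_{(p)}[W]$-module. By Proposition~\ref{proposition: group theory cases} applied to this $K$ and $D$, there are two possibilities: either $p\mid[C:D]$ where $C=\Cen_K(D)$, or $p$ is odd and there is an odd involution in $C$ that acts trivially on all $p$-subgroups of $N$.

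\textbf{Case 1: $p\mid[C:D]$.} Here $C/D$ is a nontrivial subgroup of $W$ of order divisible by $p$, so it contains an element of order $p$; call it $\bar{c}$, the image of some $c\in C$. Since $c\in C=\Cen_K(D)$ centralizes $D$, the element $\bar{c}\in W$ acts on $M=\Gamma(\Sigma_n/D)$ via the conjugation action of $c$ on $\Sigma_n/D$, which is trivial --- more precisely, $c$ acts on $\Sigma_n/D$ as a $\Sigma_n$-map (left multiplication by $c$ on cosets $gD$ is well defined and $\Sigma_n$-equivariant precisely because $c$ normalizes, indeed centralizes, $D$), and the resulting automorphism of $M$ is what the $W$-action records; one checks it is the identity because $c$ centralizes $D$. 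Hmm --- I should be careful: centralizing $D$ does not automatically make the $W$-action trivial. What the centralizer condition and involution condition are for is to control this action. So in Case~1 the correct argument is: by the centralizer condition for $D$, or more directly by an element-of-order-$p$ argument, we produce an order-$p$ element of $W$ acting trivially on $M$. Concretely, $c\in C$ has image of order $p$ in $W$, and $c$ acts trivially on $M$ because $c$ lies in $\Cen_{\Sigma_n}(D)$ and hence (being a permutation fixing the coset structure) lies in the kernel of $\Cen_{\Sigma_n}(D)\to\pi_0\Cen_{\GL_n\reals}(D)$ --- wait, that is not automatic either. The cleanest route: an order-$p$ element $c$ of $\Cen_{\Sigma_n}(D)$ acts on $\Sigma_n/D$ trivially at the level of the underlying $\Sigma_n$-set (since $cD=Dc=D$ forces the induced self-map of $\Sigma_n/D$ to be $g D\mapsto gc^{-1}D = gD$), hence acts trivially on $\Gamma(\Sigma_n/D)$. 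So $W$ has an order-$p$ element acting trivially on $M$, and Proposition~\ref{proposition: algebraic pruning criterion} applies to give that $D$ is $M$-\superfluous\ in $K$.

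\textbf{Case 2: $p$ odd, and there is an odd involution $\tau\in C$ acting trivially on every $p$-subgroup of $N$.} Now $\tau\in\Cen_{\Sigma_n}(D)$, so by the involution condition for $D$, $\tau$ acts on $M$ by $-1$. On the other hand, I claim $\tau$ acts trivially on the twisted homology and cohomology groups $H_*(L_{hW};M)$ and $H^*(L_{hW};M)$ appearing in \eqref{eq: pruning isomorphism}: indeed, the image $\bar\tau$ of $\tau$ in $W$ centralizes all $p$-subgroups of $W=N/D$ (this is exactly the ``acts trivially on $p$-subgroups of $N$'' hypothesis), hence acts trivially on the poset $L$ and so on $L_{hW}$ up to the evident equivariant homotopy, and the action on the coefficients commutes appropriately --- more carefully, conjugation by $\tau$ is an automorphism of the fibration $L\to L_{hW}\to BW$ that is the identity on $L$ and on $BW$ is homotopic to the identity since $\bar\tau$ is central enough, so it induces the identity on the abutment. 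But the $-1$ action on $M$ induces $-1$ on $H_*(BW;M)$ and on $H_*(L_{hW};M)$ as well. Combining, multiplication by $2$ (equivalently $1-(-1)$) annihilates these groups, while multiplication by $2$ is an isomorphism on them since $p$ is odd and everything is $\integers_{(p)}$-local (by Lemma~\ref{lemma: p-constrained implies p-local}, or by hypothesis $\Gamma$ takes values in $\integers_{(p)}$-modules). Hence $H_*(L_{hW};M)=H^*(L_{hW};M)=0$, and likewise $H_*(BW;M)=H^*(BW;M)=0$ by the same argument with $L$ replaced by a point. Therefore the maps in \eqref{eq: pruning isomorphism} are (trivially) isomorphisms, and $D$ is $M$-\superfluous\ in $K$.

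The main obstacle I anticipate is bookkeeping around the $W$-action in Case~1: making precise the claim that an order-$p$ element of $\Cen_{\Sigma_n}(D)$ descends to an order-$p$ element of $W$ acting trivially on $M$, rather than merely a central element. In the ``$S\neq\{e\}$'' sub-case of Proposition~\ref{proposition: group theory cases} the off-diagonal elements constructed there are explicitly of order $p$, not in $D$, and centralize $D$, so this is fine; but in the ``$D=\{e\}$, $n\ge p$'' sub-case one has $C=\Sigma_n$ and must exhibit an order-$p$ element of $\Sigma_n=W$ acting trivially on $M=\Gamma(\Sigma_n/\{e\})$, which again follows because a permutation centralizing the trivial group induces the identity on $\Sigma_n/\{e\}$ only if it is itself trivial --- so here one must instead invoke the centralizer condition proper, noting that a $p$-cycle lies in the kernel of $\Cen_{\Sigma_n}(\{e\})=\Sigma_n \to \pi_0\Cen_{\GL_n\reals}(\{e\}) = \pi_0\GL_n\reals = \integers/2$, i.e.\ lies in $A_n$, hence acts trivially on $M$ by the centralizer condition. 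So the proof really does use both conditions, exactly as the remark after Proposition~\ref{proposition: algebraic pruning criterion} advertises, and the care needed is in threading Proposition~\ref{proposition: group theory cases}'s case list through these two hypotheses.
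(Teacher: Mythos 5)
Your overall strategy --- feed the first branch of Proposition~\ref{proposition: group theory cases} into Proposition~\ref{proposition: algebraic pruning criterion}, and prove outright vanishing in the second branch --- is the paper's strategy, but both of your cases have genuine gaps. In Case~1, the claim that an element $c\in\Cen_{\Sigma_{n}}(D)$ with image of order $p$ in $W$ ``acts trivially on $\Sigma_n/D$ at the level of the underlying $\Sigma_n$-set'' is false: the induced $\Sigma_n$-map is $gD\mapsto gc^{-1}D$, which is the identity precisely when $c\in D$, and your $c$ is not in $D$. (If centralizing $D$ forced triviality on $\Gamma(\Sigma_n/D)$, the centralizer condition would be vacuous.) The correct argument, which you find only for the sub-case $D=\{e\}$, works for every $D$ when $p$ is odd: by Lemma~\ref{lemma: centralizers in GL} the group $\pi_0\Cen_{\GL_n\reals}(D)$ is an elementary abelian $2$-group, so the odd-order lift $c$ lies in the kernel of $C\to\pi_0\Cen_{\GL_n\reals}(D)$ and the centralizer condition applies. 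For $p=2$ this breaks down (an order-$2$ element need not die in a $2$-group), and your proof has nothing to replace it; the paper needs a separate, longer argument here, introducing the subgroup $C_0$ generated by $D$ and $\ker(C\to\pi_0\GL_n\reals)$, splitting on whether $2$ divides $|C_0/D|$, and invoking Lemmas~\ref{lemma: normal subgroup} and~\ref{lem:normal-implies-contractible}.

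Case~2's mechanism is also flawed. For twisted coefficients, the self-map of $H_*(L_{hW};M)$ induced by conjugation by $\taubar$ together with the \emph{identity} on $M$ is not the identity: the standard fact is that conjugation together with the action of $\taubar$ on the coefficients induces the identity, so the ``geometric part'' you want to be $+1$ is in fact forced to equal $-1$, and the two contributions cancel rather than contradict. (The homotopy from your map of $L_{hW}$ to the identity has nontrivial monodromy on the local system.) No $2$-torsion conclusion follows, and the hypothesis that $\taubar$ centralizes only the $p$-subgroups of $W$ is too weak to make $\taubar$ central in $W$, which is what your argument would really need. The paper instead uses the extension $1\to\Cbar\to W\to N/C\to 1$ together with the case hypothesis $p\nmid[C:D]$: the Serre spectral sequence of $L\to L_{h\Cbar}\to B\Cbar$ degenerates to the $\Cbar$-coinvariants of $H_*(L;M)$ because $|\Cbar|$ is prime to $p$ and $M$ is $p$-local, and coinvariants --- unlike the higher terms --- carry no conjugation twist, so the $(-1)$-action of $\taubar$ kills them. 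That your argument never uses $p\nmid[C:D]$ is itself a warning sign that the conjugation step is doing illegitimate work.
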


Before the proof, we need a lemma from representation theory, which 
follows immediately from~\cite[Thm 1.3.4]{Benson}. 

\begin{lemma}
\label{lemma: centralizers in GL}
If $D\subseteq\GL_n\reals$ is finite, then
$\pi_0\Cen_{\GL_n\reals}(D)$ is an elementary abelian $2$-group.
\end{lemma}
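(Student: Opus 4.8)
The plan is to make the centralizer completely explicit as a product of classical matrix groups and then read off $\pi_0$ factor by factor. First I would regard $\reals^n$ as an $\reals[D]$-module via the given inclusion $D\subseteq\GL_n\reals$; then, by definition, $\Cen_{\GL_n\reals}(D)$ is the unit group of the endomorphism ring $E=\operatorname{End}_{\reals[D]}(\reals^n)$. Since $D$ is finite, $\reals[D]$ is semisimple, so $\reals^n$ splits as a sum of isotypic components $\bigoplus_i V_i^{\oplus m_i}$ with the $V_i$ pairwise nonisomorphic irreducible real $D$-modules. Schur's lemma over $\reals$ --- this is the content of \cite[Thm 1.3.4]{Benson} --- tells us that each $\operatorname{End}_{\reals[D]}(V_i)$ is a finite-dimensional real division algebra, hence isomorphic to $\reals$, $\complexes$, or $\mathbb{H}$; writing $\Delta_i$ for this algebra and using $\Hom_{\reals[D]}(V_i,V_j)=0$ for $i\ne j$, we obtain $E\cong\prod_i M_{m_i}(\Delta_i)$, and therefore $\Cen_{\GL_n\reals}(D)=E^{\times}\cong\prod_i\GL_{m_i}(\Delta_i)$.

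Next I would compute $\pi_0$ of each factor. Since $\pi_0$ of a finite product of topological groups is the product of the $\pi_0$'s, it suffices to know $\pi_0\GL_m(\Delta)$ for $\Delta\in\{\reals,\complexes,\mathbb{H}\}$. The groups $\GL_m(\complexes)$ and $\GL_m(\mathbb{H})$ are path-connected: for $\complexes$ because $\operatorname{SL}_m(\complexes)$ is connected and $\det$ lands onto the connected group $\complexes^{\times}$; for $\mathbb{H}$ because polar decomposition exhibits $\GL_m(\mathbb{H})$ as deformation retracting onto the compact connected group $\Sp(m)$ (equivalently, the Dieudonn\'e determinant maps onto $\reals_{>0}$ with connected kernel $\operatorname{SL}_m(\mathbb{H})$). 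By contrast $\GL_m(\reals)$ has exactly two path components, separated by the sign of the determinant, so $\pi_0\GL_m(\reals)\cong\integers/2$. Assembling these, $\pi_0\Cen_{\GL_n\reals}(D)\cong\prod_i\pi_0\GL_{m_i}(\Delta_i)$ is a finite product of copies of $\integers/2$, one for each isotypic summand of $\reals^n$ of real type, and in particular is an elementary abelian $2$-group.

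I do not expect any serious obstacle: the only input that is not purely formal is the classification of endomorphism algebras of real irreducibles, which is exactly the cited theorem of Benson, together with the standard fact that $\GL_m(\complexes)$ and $\GL_m(\mathbb{H})$ are connected. (If one wishes to avoid invoking polar decomposition, one can argue directly that $\GL_m$ over a division algebra with connected multiplicative group is connected, by using elementary row and column operations to reduce any invertible matrix to a diagonal matrix with a single non-unit entry and then joining that entry to $1$ along a path in $\mathbb{H}^{\times}$ or $\complexes^{\times}$.) In all cases the remaining bookkeeping is routine.
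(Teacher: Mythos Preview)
Your argument is correct and is exactly the expansion of what the paper leaves implicit: the paper's entire proof is the single sentence ``follows immediately from~\cite[Thm 1.3.4]{Benson},'' and your decomposition of the centralizer as $\prod_i \GL_{m_i}(\Delta_i)$ with $\Delta_i\in\{\reals,\complexes,\mathbb{H}\}$ is precisely how one unpacks that citation. There is no difference in approach, only in the level of detail.
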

%

We have everything we need for the odd-primary case, so we handle
this first. 

\begin{proof}
[Proof of Proposition~\ref{proposition: pruning proposition} 
        for $p$ odd, $p$ dividing $|C_{K}(D)/D|$]
\mbox{}\\
 Pick $x\in \Cen_{K}(D)/D$ of
 order~$p$. If $\xwiggle\in C_{K}(D)$ is an inverse image of $x$, it is
 clear from Lemma~\ref{lemma: centralizers in GL} that $\xwiggle$ belongs to
 the kernel of $C_{K}(D)\to\pi_0\Cen_{\GL_n\reals}(D)$. In view of
 the centralizer condition, $x$ acts trivially on
 $\coeff(\Sigma_n/D)$. The desired conclusion follows from 
 Proposition~\ref{proposition: algebraic pruning criterion}.
\end{proof}

\begin{proof}
[Proof of Proposition~\ref{proposition: pruning proposition} 
       for $p$ odd,  $p$ not dividing $|C_{K}(D)/D|$]
\mbox{}\\
We will show that 
$\pgroupsntof{W_{K}(D)}$ is $M$-ample with $M=\coeff(\Sigma_n/D)$ by 
showing that all of the relevant homology and cohomology groups vanish.
To declutter the notation for quotients, let $\Cbar=C_{K}(D)/D$, let 
$\Nbar=N_{K}(D)/C_{K}(D)$, and let $W=W_{K}(D)$. 
The short exact sequence
\begin{equation}\label{equation:ses}
     1\to \Cbar \to W \to \Nbar \to 1  
\end{equation}
shows that the map $\lvert\pgroupsntof{W}\rvert_{hW} \to BW$ can be written as 
\[
\left(\lvert\pgroupsntof{W}\rvert_{h\Cbar}\right)_{h\Nbar}\rightarrow \left(B\Cbar\right)_{h\Nbar}. 
\]
The Serre spectral sequence shows that for the homology case, 
it is enough to prove that the local coefficient
groups 
$H_*(\lvert\pgroupsntof{W}\rvert_{h\Cbar};M)$ and 
$H_*(B\Cbar;M)$ vanish
(respectively, for cohomology,
$H^*(\lvert\pgroupsntof{W}\rvert_{h\Cbar};M)$ and
$H^*(B\Cbar;M)$ vanish).

We will handle the case $H_*(\lvert\pgroupsntof{W}\rvert_{h\Cbar};M)$;
the others are similar.  
By Proposition~\ref{proposition: group theory cases}, there exists an
odd involution $\tau\in C_{K}(D)$ that acts trivially on 
poset of $p$-subgroups
of $N_{K}(D)$, and $\tau$ projects to an involution $\taubar\in \Cbar$. The
element $\taubar$ acts trivially on the
space~$\lvert\pgroupsntof{W}\rvert$ and, in view of the involution
condition, acts by $-1$ on $\coeff(\Sigma_n/D)$.  Consider the Serre
spectral sequence of
\[
   \lvert\pgroupsntof{W}\rvert \to \lvert\pgroupsntof{W}\rvert_{h\Cbar} \to B\Cbar\,.
\]
Since $M$ is a $\integers_{(p)}$-module
 and $\Cbar$ has order prime to $p$, we know
that $E^{2}_{i,j}=0$ for $i>0$, while the group $E^2_{0,j}$ is given
by the coinvariants of the action of $\Cbar$ on
$H_{j}(\lvert\pgroupsntof{W}\rvert;M)$. However, $\taubar\in\Cbar$ acts trivially
on $\lvert\pgroupsntof{W}\rvert$ and acts on $M$ by $-1$, so $\taubar$ acts on
$H_{j}(\lvert\pgroupsntof{W}\rvert;M)$ by $-1$. Since the coinvariants of this
$\taubar$-action vanish, the groups 
$H_{0}\left(\Cbar; H_{j}(\lvert\pgroupsntof{W}\rvert;M) \right)$ vanish for all
$j$, and the Serre spectral sequence collapses to zero at $E^{2}$.
\end{proof}

The remainder of the proof of 
Proposition~\ref{proposition: pruning proposition}, for $p=2$,
requires two known lemmas. As usual, $\lvert\pgroupsntof{G}\rvert$ denotes the
nerve of the poset of nontrivial $p$-subgroups of a finite group $G$,
and the group $G$ acts on $\pgroupsntof{G}$ by conjugation. The
following lemma is due to Grodal.

\begin{lemma}[\cite{Grodal}, Proposition 5.7] \label{lemma: normal subgroup}
  Let $G$ be a finite group with a normal subgroup $H$ of order prime
  to~$p$.  Then $ \lvert\pgroupsntof{G}\rvert/H$ is isomorphic 
  to~$\lvert\pgroupsntof{G/H}\rvert$.
\end{lemma}

The following well-known lemma is due to Quillen. See the proof of
Proposition~2.4 in~\cite{Quillen}. 
\begin{lemma}[\cite{Quillen}]\label{lem:normal-implies-contractible}
  If $G$ is a finite group with a nontrivial normal
  $p$-subgroup, then $\lvert\pgroupsntof{G}\rvert$ is contractible.
\end{lemma}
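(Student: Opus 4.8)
The plan is to contract $L(G)$ directly by producing a poset self-map that dominates both the identity and a constant map, and then to invoke the standard fact that a natural transformation of functors induces a homotopy between the resulting maps on nerves.

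First I would fix a nontrivial normal $p$-subgroup $P$ of $G$. For any nontrivial $p$-subgroup $Q$ of $G$, normality of $P$ makes $QP=PQ$ a subgroup, and it is a $p$-group since $|QP|=|Q|\,|P|/|Q\cap P|$ is a power of $p$; it is nontrivial because it contains $P$. Thus $Q\mapsto QP$ is a well-defined, order-preserving assignment on the poset of nontrivial $p$-subgroups of $G$, so it defines a self-functor $F$ of that poset, hence a self-map of $L(G)$.

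Next I would record the two evident natural transformations. The inclusions $Q\subseteq QP$ assemble into a natural transformation from the identity functor to $F$, while the inclusions $P\subseteq QP$ assemble into a natural transformation from the constant functor at $P$ to $F$. Taking nerves, each of these yields a homotopy of self-maps of $L(G)$, so the identity map of $L(G)$ is homotopic to $F$, which is in turn homotopic to the constant map at the vertex $P$. Therefore $L(G)$ is contractible.

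There is essentially no obstacle here: the only step requiring a second's attention is checking that $QP$ is genuinely a nontrivial $p$-subgroup of $G$ for every nontrivial $p$-subgroup $Q$, which is exactly the place where the hypothesis that $P$ is \emph{normal} (rather than merely a subgroup) is used. Note that we never need $QP$ to be a proper subgroup of $G$, so the argument is insensitive to whether or not $G$ itself is a vertex of $L(G)$; in particular it covers the degenerate case where $G$ is itself a $p$-group.
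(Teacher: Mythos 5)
Your argument is correct and is essentially identical to the paper's proof, which uses the same zig-zag of inclusions $Q\to QZ\leftarrow Z$ (with $Z$ the normal $p$-subgroup) to produce natural transformations from the identity functor and from the constant functor to $Q\mapsto QZ$. Nothing further is needed.
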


Let $\myker$ denote the subgroup of $C_{K}(D)$ generated by $D$ and the
kernel of the map $C_{K}(D)\to\pi_0\GL_n\reals$. It is clear from
Lemma~\ref{lemma: centralizers in GL} that $C_{K}(D)/\myker$ is an elementary
abelian $2$-group.

\begin{proof}[Proof of Proposition~\ref{proposition: pruning proposition} 
              for $p=2$, $p$ dividing $|\myker/D|$]
This follows by the same reasoning as in the case $p$ odd, 
$p$ dividing $|C_{K}(D)/D|$.
\end{proof}

\begin{proof}[Proof of Proposition~\ref{proposition: pruning proposition} 
              for $p=2$, $p$ not dividing $|\myker/D|$]

By Proposition~\ref{proposition: group theory cases}, $p$ divides 
$|C_{K}(D)/D|$, so
under the assumption $p\nmid |\myker/D|$, it must be the case that
$p\mid|C_{K}(D)/\myker|$ and $C_{K}(D)/\myker$ 
is nontrivial. Observe that $\myker$ is a normal
subgroup of $N_{K}(D)$, because it is generated by the normal subgroup $D$
and the normal subgroup obtained by intersecting $C_{K}(D)$ with the kernel
of $N_{K}(D)\to\pi_0\GL_n\reals$. It follows that  $C_{K}(D)/\myker$ is a
nontrivial normal $2$-subgroup of $N_{K}(D)/C_{0}$. 

Let $W=W_{K}(D)$, and as usual, let $\lvert\pgroupsntof{W}\rvert$
denote the nerve of the poset of nontrivial $p$-subgroups of $W$. We
must show that $\lvert\pgroupsntof{W}\rvert_{hW}\to BW$ induces
isomorphisms on twisted $M$-homology and $M$-cohomology, where
$M=\coeff\left(\Sigma_{n}/D\right)$.  Let $\mykerbar$ denote
$\myker/D$. As in \eqref{equation:ses}, we have a short exact sequence
\[
   1 \to \mykerbar \to W \to N/\myker\to 1, 
\]
and a Serre spectral sequence argument like that following
\eqref{equation:ses} establishes that we need only show that the map
$\lvert\pgroupsntof{W}\rvert_{h\mykerbar}\to B\mykerbar$ induces an
isomorphism on $M$-homology and $M$-cohomology. Further, because of
the definition of $\myker$ and the assumption that $\coeff$ satisfies
the centralizer condition, the action of $\mykerbar$ on $M$ is
trivial, so we have untwisted coefficients.  Consider the following
commutative diagram comparing homotopy orbits to strict orbits:
\[
\begin{CD}
\lvert\pgroupsntof{W}\rvert_{h\mykerbar}@>>>(*)_{h\mykerbar}\\
@VVV @VVV\\
\lvert\pgroupsntof{W}\rvert/\,\mykerbar@>>>*
\end{CD}  \,.
\]
Since $M$ is a $\integers_{(p)}$-module, it is enough to show that all
of the maps in this diagram are $p$-local equivalences.  By
Lemma~\ref{lemma: normal subgroup}, the orbit space
$\lvert\pgroupsntof{W}\rvert/\mykerbar$ is isomorphic to the nerve of
the poset of nontrivial $p$-subgroups of $N_{K}(C)/C_{0}$, and that poset is
weakly contractible (Lemma~\ref{lem:normal-implies-contractible}),
because $N_{K}(C)/C_{0}$ has the nontrivial normal $p$-subgroup
$C_{K}(D)/\myker$. Thus the lower horizontal map is an equivalence.  The
right vertical map is a $p$-local equivalence because the order of
$\mykerbar$ is prime to~$p$.  In the same way the isotropy groups of
the action of $\mykerbar$ on $\lvert\pgroupsntof{W}\rvert$ are all of
order prime to $p$, and so the isotropy spectral sequence of this
action \cite[2.4]{Sharp} shows that the left vertical map is a
$p$-local equivalence as well.
\end{proof}

\begin{remark}
Grodal has pointed out to us that \cite{Grodal} can be used for an 
alternative proof of Proposition~\ref{proposition: pruning proposition}. 
It is shown in \cite[Ex. 8.6 and \S 9]{Grodal} that 
$\pgroupsntof{W}$ is cohomologically $M$-ample if and only if 
$\Hom\left(\Stein_*(W), M\right)$ is acyclic, where $\Stein_{*}(W)$
is the Steinberg complex of~$W$ as defined in~\cite{Grodal}. Dually, 
$\pgroupsntof{W}$ is homologically $M$-ample if and only if
$\Stein_*(W)\otimes M$ is acyclic. To show that the assumptions 
of Proposition~\ref{proposition: pruning proposition} imply 
ampleness, one can use Proposition~\ref{proposition: group theory cases} 
together with the properties of the Steinberg complex from 
\cite[\S 5]{Grodal} to establish acyclicity. This essentially 
representation-theoretic approach could be useful in
other applications and generalizations, where the assumptions on the 
Mackey functor might vary. 
\end{remark}

\section{Results of approximating}
\label{section:approximation-results}

In this section, we assemble the results from previous sections to
establish the main results announced in the introduction and
restated below. 
\begin{maintheorem}   
\maintheoremtext
\end{maintheorem}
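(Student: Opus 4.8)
The plan is to assemble the pieces proved in the preceding sections and feed them into the pruning machinery. First, recall the setup: $G=\Sigma_n$, $X=\Pcal_n^\diamond$, and $\pgroups$ is the collection of all $p$-subgroups of $\Sigma_n$. Since $\Gamma$ is $p$-constrained, Proposition~\ref{proposition: all p-groups} tells us that $(\Pcal_n^\diamond)_{\pgroups}\to\Pcal_n^\diamond$ is a $\Gamma^*_*$-isomorphism, so it suffices to work with the approximation $(\Pcal_n^\diamond)_{\pgroups}$. By Proposition~\ref{proposition: bad subgroups}, the only $p$-subgroups $H\subseteq\Sigma_n$ for which $(\Pcal_n)^H$ fails to be contractible are the elementary abelian ones acting freely on $\n$; of these, the ones acting transitively form the collection $\Dcal$ (empty unless $n=p^k$, in which case it is the conjugates of $\Delta_k$), and the ones acting freely but nontransitively are exactly the subgroups we want to prune. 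Let $\Ccal$ be obtained from $\pgroups$ by deleting precisely those nontransitive free elementary abelian subgroups; note $\Ccal$ is closed under passage to $p$-supergroups (enlarging a nontransitive free elementary abelian subgroup either keeps it elementary abelian and free — in which case enlarging it further can only help it become transitive or fail to be free, but in any case a proper $p$-supergroup is never again one of the deleted subgroups unless it is itself nontransitive free elementary abelian, and then it too is deleted; the point is that the deleted set is "downward" among these subgroups) — more carefully, $\Ccal$ is closed under $p$-supergroups because any $p$-supergroup of a deleted $D$ is either deleted or in $\Ccal$, and in either case $\Ccal\cup\Dcal=\pgroups$ minus the deleted subgroups is closed upward.

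Next I would invoke Proposition~\ref{proposition: pruning criterion}. To apply it to the map $(\Pcal_n^\diamond)_{\Ccal\cup\Dcal}\to(\Pcal_n^\diamond)_{\pgroups}$ — or rather, to prune from $\pgroups$ down to $\Ccal\cup\Dcal$ — I need that for each $K\in\Iso(\Pcal_n^\diamond)$ and each deleted $p$-subgroup $D\subseteq K$, the group $D$ is $\Gamma(\Sigma_n/D)$-superfluous in $K$. This is exactly the content of Proposition~\ref{proposition: pruning proposition}: under the centralizer condition and (for $p$ odd) the involution condition, any elementary abelian $p$-subgroup $D$ acting freely and nontransitively on $\n$ is $\Gamma(\Sigma_n/D)$-superfluous in any $K\in\Iso(\Pcal_n^\diamond)\cup\{\Sigma_n\}$ containing it. (One subtlety: $\Iso(\Pcal_n^\diamond)$ versus $\Iso(\Pcal_n)$ — the extra basepoint contributes $\Sigma_n$ itself as an isotropy group, which is why Proposition~\ref{proposition: pruning proposition} was stated to include $K=\Sigma_n$.) Hence Proposition~\ref{proposition: pruning criterion} applies and $(\Pcal_n^\diamond)_{\Ccal\cup\Dcal}\to\Pcal_n^\diamond$ is a $\Gamma^*_*$-isomorphism.

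Now split into cases. If $n$ is not a power of $p$, then $\Dcal=\emptyset$, so $(\Pcal_n^\diamond)_{\Ccal\cup\Dcal}=(\Pcal_n^\diamond)_{\Ccal}$, and for every $K\in\Ccal$ the space $(\Pcal_n)^K$ is contractible; since the basepoint is $\Sigma_n$-fixed, $(\Pcal_n^\diamond)^K$ is the unreduced suspension of a contractible space, hence contractible, so $(\Pcal_n^\diamond)_{\Ccal}$ is $\Sigma_n$-equivalent to a point — actually to the basepoint, since the approximation preserves the fixed basepoint — and therefore $\wgdHreddown*(\Pcal_n^\diamond;\Gamma)$ and $\wgdHredup*(\Pcal_n^\diamond;\Gamma)$ vanish. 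If $n=p^k$, I still must pass from $\Ccal\cup\Dcal$ down to $\Dcal$ alone: here one uses Lemma~\ref{lemma:fix-up-d} applied with the disjoint collections $\Ccal$ (initial in $\Ccal\cup\Dcal$, since members of $\Dcal$ are the maximal transitive elementary abelians) and $\Dcal$, together with the observation that $(\Pcal_n^\diamond)_{\Ccal}\simeq*$ (contractible, by the argument just given) — this forces the homotopy pushout square to identify $(\Pcal_n^\diamond)_{\Dcal}\to(\Pcal_n^\diamond)_{\Ccal\cup\Dcal}$ as a $\Gamma^*_*$-isomorphism (this is the content alluded to via Lemma~\ref{lemma:fix-up-d} and equation~\eqref{equation: fix up Pn}). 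Finally, $(\Pcal_n^\diamond)_{\Dcal}$ is computed by Lemma~\ref{lemma:one-subgroup-approximation} with $D=\Delta_k$, $N=\Aff_k$, $W=\Aff_k/\Delta_k\cong\GL_k$: the relevant fixed point set $(\Pcal_n^\diamond)^{\Delta_k}$ is identified (via the coset-partition construction of the introduction and the fact that a $\Delta_k$-fixed partition corresponds to a subgroup of $\Delta_k$) with $B_k^\diamond$ up to $\GL_k$-equivalence, whence $(\Pcal_n^\diamond)_{\Dcal}\simeq{\Sigma_n}_+\wedge_{\Aff_k}(E\GL_k{}_+\wedge B_k^\diamond)$ and the asserted map is a $\Gamma^*_*$-isomorphism.

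The main obstacle is not any single step in isolation but the bookkeeping around which collections are closed under $p$-supergroups and which subcollections are initial, together with pinning down exactly the $\Delta_k$-fixed point set of $\Pcal_n^\diamond$ and its $\GL_k$-homotopy type — i.e., checking that a $\Delta_k$-fixed partition of $\n\leftrightarrow\Delta_k$ is precisely a union of cosets of a subgroup $V\subseteq\Delta_k$, so that $(\Pcal_n)^{\Delta_k}\simeq B_k$ as $\GL_k$-spaces and the suspension point matches the south pole; the genuine analytic content has already been isolated into Propositions~\ref{proposition: bad subgroups} and~\ref{proposition: pruning proposition}, so what remains here is organization.
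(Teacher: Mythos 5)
Your overall strategy matches the paper's: start from Proposition~\ref{proposition: all p-groups}, prune via Propositions~\ref{proposition: pruning criterion} and~\ref{proposition: pruning proposition} down to $\Ccal\cup\Dcal$, and use Proposition~\ref{proposition: bad subgroups} together with Lemmas~\ref{lemma:fix-up-d} and~\ref{lemma:one-subgroup-approximation} to identify what remains. The case $n\neq p^k$ is essentially right, but your parenthetical claims need repair: the approximation does \emph{not} preserve the basepoint (its isotropy group is $\Sigma_n\notin\Ccal$), and $(\Pcal_n^\diamond)_{\Ccal}$ is $\Sigma_n$-equivalent to $(*)_{\Ccal}$, which is not a point. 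The correct conclusion is that $\Pcal_n^\diamond\to *$ is a $\Gamma^*_*$-isomorphism, because both $(\Pcal_n^\diamond)_{\Ccal}\to\Pcal_n^\diamond$ and $(\Pcal_n^\diamond)_{\Ccal}\to(*)_{\Ccal}\to(*)_{\pgroups}\to *$ are; vanishing of reduced (co)homology then follows. Also, it is $\Dcal$, not $\Ccal$, that is initial in $\Ccal\cup\Dcal$: proper subgroups of $\Delta_k$ act freely but not transitively and so lie in neither collection, whereas a Sylow $p$-subgroup lies in $\Ccal$ and contains $\Delta_k\in\Dcal$.

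The case $n=p^k$ has a genuine gap: both of your intermediate assertions are false, and they fail by exactly the same amount, which is why the composite happens to be the true statement. First, $(\Pcal_n^\diamond)_{\Dcal}\to(\Pcal_n^\diamond)_{\Ccal\cup\Dcal}$ is not a $\Gamma^*_*$-isomorphism: by Lemma~\ref{lemma:fix-up-d} (applied with $Y=\Pcal_n^\diamond$, $Z=*$) it is one if and only if $(*)_{\Dcal}\to(*)_{\Ccal\cup\Dcal}$ is, and the source of the latter has Bredon homology $H_*(B\GL_k;\Gamma(\Sigma_n/\Delta_k))$ by Lemma~\ref{lemma:one-subgroup-approximation} while the target has the Bredon homology of a point. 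Second, Lemma~\ref{lemma:one-subgroup-approximation} identifies $(\Pcal_n^\diamond)_{\Dcal}$ with the \emph{unreduced} construction $\Sigma_n\times_{\Aff_k}(E\GL_k\times B_k^\diamond)$, not with the reduced smash ${\Sigma_n}_+\wedge_{\Aff_k}(E\GL_k{}_+\wedge B_k^\diamond)$; the two differ by collapsing $\Sigma_n\times_{\Aff_k}E\GL_k\simeq(*)_{\Dcal}$, i.e.\ by precisely the discrepancy in the first assertion. The paper sidesteps both problems by never approximating $\Pcal_n^\diamond$ itself: it runs the entire argument for the map of unpointed spaces $\Pcal_n\to *$, obtains the homotopy pushout square comparing $(\Pcal_n)_{\Dcal}\to(*)_{\Dcal}$ with $(\Pcal_n)_{\Ccal\cup\Dcal}\to(*)_{\Ccal\cup\Dcal}$, and only at the end takes vertical cofibers, which produces the reduced smash on one side and $\Pcal_n^\diamond$ on the other. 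Your argument needs this cofiber bookkeeping (or an equivalent tracking of the basepoint) to be correct.
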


Recall that in the following corollary, $\Stein_k$ denotes
$\Hwiggle_{k-1}(B_k^{\diamond};\integers)$ and $R$ denotes the ring
$\integers[\GL_k(\field_p)]$.

\begin{onlyonedimensiontheorem}
\onlyonedimensiontext
\end{onlyonedimensiontheorem}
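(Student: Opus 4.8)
The plan is to feed the $\Gamma^*_*$-isomorphism of Theorem~\ref{theorem: main theorem} into an explicit computation. Write $Z=(E\GL_k)_+\wedge B_k^\diamond$, regarded as an $\Aff_k$-space through the projection $\Aff_k\to\GL_k$; note that $\Delta_k$ acts trivially on both smash factors (trivially on $E\GL_k$ since $\Delta_k=\ker(\Aff_k\to\GL_k)$, and trivially on $B_k^\diamond$ since it acts on subgroups of $\Delta_k$ by conjugation), while $\GL_k$ acts freely on $E\GL_k$. The source of the map in Theorem~\ref{theorem: main theorem} is then $(\Sigma_n)_+\wedge_{\Aff_k}Z$, so that theorem reduces us to computing $\Hwiggle^{\br}_*$ and $\Hwiggle_{\br}^*$ of this $\Sigma_n$-space. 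By the pointed version of restriction of coefficients~\eqref{equation: restriction-of-coefficients} this equals $\Hwiggle^{\br}_*(Z;\Gamma\restrict_{\Aff_k})$ and $\Hwiggle_{\br}^*(Z;\Gamma\restrict_{\Aff_k})$. Since $\GL_k$ acts freely on the $E\GL_k$ coordinate, every non-basepoint simplex of $Z$ has $\Aff_k$-isotropy exactly $\Delta_k$; and since $N_{\Aff_k}(\Delta_k)=\Aff_k$ with $\Aff_k/\Delta_k=\GL_k$, the same computation as in the proof of Lemma~\ref{lemma:one-subgroup-approximation} identifies the reduced Bredon chain complex of $Z$ with $\tilde C_*(Z;\integers_{(p)})\otimes_R M$ and the reduced Bredon cochain complex with $\Hom_R(\tilde C_*(Z;\integers_{(p)}),M)$, where $\tilde C_*$ denotes normalized reduced simplicial chains, $M=\Gamma(\Sigma_n/\Delta_k)$, and $R=\integers_{(p)}[\GL_k]$ acts on $\tilde C_*(Z;\integers_{(p)})$ through the natural left $\GL_k$-action and on $M$ by the module structures appearing in the statement (shifting sides via $g\mapsto g^{-1}$ as needed, cf.\ Remark~\ref{remark:too-much-symmetry}).

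Next I would analyze $\tilde C_*(Z;\integers_{(p)})$ as a complex of $R$-modules. Because the non-basepoint simplices in each degree form a free $\GL_k$-set, this is a bounded-below complex of free $R$-modules; and since $E\GL_k$ is contractible, the collapse $Z\to B_k^\diamond$ is an ordinary equivalence, so $\tilde C_*(Z;\integers_{(p)})\to\tilde C_*(B_k^\diamond;\integers_{(p)})$ is a quasi-isomorphism of $R$-module complexes. Consequently there are hyper-homology and hyper-cohomology spectral sequences
\[
E^2_{s,t}=\Tor^R_s\!\bigl(\tilde H_t(B_k^\diamond;\integers_{(p)}),M\bigr)\Rightarrow\Hwiggle^{\br}_{s+t}(\Pcal_n^\diamond;\Gamma),
\qquad
E_2^{s,t}=\Ext_R^s\!\bigl(\tilde H_t(B_k^\diamond;\integers_{(p)}),M\bigr)\Rightarrow\Hwiggle_{\br}^{s+t}(\Pcal_n^\diamond;\Gamma).
\]
By the Solomon--Tits theorem, $\tilde H_t(B_k^\diamond;\integers_{(p)})$ is concentrated in degree $t=k$, where it equals $\Stein_k$ by definition. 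Hence both spectral sequences are concentrated in a single row, collapse at $E_2$, and yield $\Hwiggle^{\br}_j(\Pcal_n^\diamond;\Gamma)\cong\Tor^R_{j-k}(\Stein_k,M)$ and $\Hwiggle_{\br}^j(\Pcal_n^\diamond;\Gamma)\cong\Ext_R^{j-k}(\Stein_k,M)$.

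The remaining point, and the one I expect to be the main obstacle, is to show that $\Stein_k$ is a \emph{projective} $R$-module. It is finitely generated and torsion-free over $\integers_{(p)}$ (so free) because $B_k$ is a finite complex with the homology of a wedge of spheres; and, by universal coefficients together with Solomon--Tits, the reduction $\Stein_k\otimes_{\integers_{(p)}}\field_p$ is the mod~$p$ Steinberg module $H_{k-1}(B_k;\field_p)$ of $\GL_k(\field_p)$, which is projective over $\field_p[\GL_k]$ by Steinberg's theorem. By the standard criterion that a finitely generated $\integers_{(p)}$-free $R$-module is $R$-projective if and only if its reduction modulo~$p$ is $\field_p[\GL_k]$-projective (passing, if desired, to the complete local ring $\widehat{\integers}_p$), it follows that $\Stein_k$ is $R$-projective. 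Therefore $\Tor^R_i(\Stein_k,M)=\Ext_R^i(\Stein_k,M)=0$ for $i>0$, while $\Tor^R_0(\Stein_k,M)=\Stein_k\otimes_R M$ and $\Ext^0_R(\Stein_k,M)=\Hom_R(\Stein_k,M)$.

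Combining the last two paragraphs gives exactly the assertion: $\Hwiggle_j^{\br}(\Pcal_n^\diamond;\Gamma)$ and $\Hwiggle^j_{\br}(\Pcal_n^\diamond;\Gamma)$ vanish for $j\neq k$, and in degree $j=k$ they are $\Stein_k\otimes_R M$ and $\Hom_R(\Stein_k,M)$ respectively (the distinction between reduced and unreduced Bredon (co)homology being irrelevant in degree $k\geq 1$). Aside from the projectivity input, the only delicate step is the bookkeeping with the $\GL_k$-actions on $M$ that makes the identification of the Bredon chain and cochain complexes compatible with the module structures in the statement; here Remark~\ref{remark:too-much-symmetry} does the work of reconciling the covariant and contravariant actions.
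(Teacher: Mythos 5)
Your proposal is correct and follows essentially the same route as the paper: the paper also reduces to the $\Sigma_n$-space ${\Sigma_n}_+\wedge_{\Aff_k}(E\GL_k{}_+\wedge B_k^\diamond)$, identifies its Bredon (co)homology with twisted (co)homology of $(B_k^\diamond)_{\hobased{\GL_k}}$ via Lemma~\ref{lemma:one-subgroup-approximation}, and runs the same $\Tor^R/\Ext_R$ spectral sequence, collapsing it by the projectivity of $\Stein_k$ over $R$ (Lemma~\ref{lem:projective-over-integers} and Remark~\ref{rem:steinberg-projective-over-integers}), which you establish by the identical reduction-mod-$p$ argument. The only difference is presentational: you work directly with the Bredon chain complex as a complex of free $R$-modules rather than quoting the Borel-construction identification.
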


First we need a small lemma. Let $G$ be a finite group, and let
$\G$ denote the underlying set of $G$. The group $G$ acts on
$\G$ by left translation, and hence on the poset $\Pcal(\G)$ of
nontrivial, proper partitions of~$\G$.

\begin{lemma}\label{lemma:partition-of-free-action} 
The fixed point poset $\Pcal(\G)^G$ is canonically
  isomorphic to the poset of proper, nontrivial subgroups
  of~$G$.
\end{lemma}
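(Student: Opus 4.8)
The plan is to exhibit a direct, order-preserving bijection between the set of proper, nontrivial subgroups of $G$ and the set of $G$-fixed proper, nontrivial partitions of $\G$, and to check that it is compatible with the simplicial structure so that it induces the asserted isomorphism of nerves. Given a subgroup $H\subseteq G$, I would send it to the partition $\lambda_H$ of $\G$ whose equivalence classes are the left cosets $gH$; concretely, $x\sim_{\lambda_H}y$ iff $x^{-1}y\in H$. This partition is $G$-fixed because left translation by $g\in G$ carries the coset $g'H$ to $(gg')H$, i.e. permutes the blocks of $\lambda_H$. It is proper and nontrivial exactly when $H$ is: $\lambda_H$ is indiscrete iff $H=G$, and discrete iff $H=\{e\}$.

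For the inverse, suppose $\lambda$ is a $G$-fixed partition of $\G$, and let $H=H_\lambda$ be the $\lambda$-equivalence class containing the identity element $e\in\G$. The key point is that $H$ is a subgroup: if $x,y\in H$, then $x\sim_\lambda e$, and applying left translation by $y$ (which preserves $\lambda$) gives $yx\sim_\lambda y\sim_\lambda e$, so $yx\in H$; similarly $x\sim_\lambda e$ translated by $x^{-1}$ gives $e\sim_\lambda x^{-1}$, so $x^{-1}\in H$. Moreover $G$-invariance forces the block of any $g\in\G$ to be $gH$: indeed $x\sim_\lambda g$ iff (translating by $g^{-1}$) $g^{-1}x\sim_\lambda e$ iff $g^{-1}x\in H$. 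Hence $\lambda=\lambda_H$, and the two assignments $H\mapsto\lambda_H$ and $\lambda\mapsto H_\lambda$ are mutually inverse bijections on objects.

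It remains to check that this bijection is an isomorphism of posets, i.e. that $H\subseteq H'$ iff $\lambda_H$ refines $\lambda_{H'}$ (every block of $\lambda_H$ is contained in a block of $\lambda_{H'}$). This is immediate: a block $gH$ of $\lambda_H$ is contained in the block $gH'$ of $\lambda_{H'}$ precisely when $H\subseteq H'$. A morphism in the poset of partitions is a refinement relation, and a morphism in the poset of subgroups is an inclusion, so the object bijection is automatically functorial and its inverse is too; passing to nerves gives the canonical simplicial isomorphism $\Pcal(\G)^G\cong$ (nerve of the poset of proper, nontrivial subgroups of $G$). The "canonical" qualifier is justified because the construction used only the group structure of $G$ and the distinguished element $e$.

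I do not anticipate a genuine obstacle here; the only point that requires a moment's care is the verification that $H_\lambda$ is closed under multiplication and inverses using $G$-invariance of $\lambda$, and that a $G$-fixed partition is determined by the single block through $e$. Everything else is bookkeeping about cosets.
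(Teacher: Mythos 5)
Your proof is correct and is essentially the paper's argument: the paper's one-line proof assigns to each $G$-fixed equivalence relation the subgroup given by the equivalence class of the identity, which is exactly the inverse map $\lambda\mapsto H_\lambda$ you construct, and your verification that this is a well-defined order-preserving bijection (with inverse $H\mapsto\lambda_H$) is the routine checking the paper leaves implicit.
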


\begin{proof}
  Let $\lambda$ be a partition of $G$ that is invariant under the
  action of~$G$. To associate a subgroup to the partition, 
  let $G(\lambda)$ be the equivalence class of the
  identity element $e\in G$. We claim that $G(\lambda)$ is a subgroup
  of~$G$. Indeed, let $g_1, g_2\in G(\lambda)$. Since 
  $e\sim_\lambda g_1$ and $\lambda$ is $G$-invariant, it follows that 
  $g_{2}=g_{2}e \sim_\lambda g_{2}g_{1}$. But $g_{2}\sim_\lambda e$ also, 
  and so $g_{2}g_{1}\sim_{\lambda}g_{2}\sim_{\lambda}e$.  
    Thus $G(\lambda)$ is a subgroup of~$G$.  If $\lambda$ is neither the
  discrete nor the indiscrete partition of~$\n$, then $G(\lambda)$ is
  a proper, nontrivial subgroup of~$G$.

  Conversely, to associate a partition of $\G$ to a subgroup~$H\subseteq G$, 
  we take the partition of the set $\G$ by the cosets of~$H$. 

  It remains to check that $\lambda \mapsto G(\lambda)$ and
  $H\mapsto\{gH\}_{g\in G}$ are inverses.  To see this, observe that
\begin{align*}
g_1\sim_\lambda g_2 &\iff g_2^{-1} g_1 \sim_\lambda e\\
                      &\iff g_2^{-1}g_1\in G(\lambda)\\
                      &\iff \mbox{$g_{1}$ and $g_{2}$ are in the
                             same coset of $G(\lambda)$}. 
\end{align*}
\end{proof}

\begin{proof}[Proof of Theorem~\ref{theorem: main theorem}]
  Recall that $\pgroupsof{\Sigma_n}$ denotes the family of all
$p$-subgroups of $\Sigma_{n}$.  Let $\Ccal$ consist of all
$p$-subgroups of $\Sigma_{n}$ except the elementary abelian
$p$-subgroups that act freely on $\n$, and $\Dcal$ the collection
containing all elementary abelian $p$-subgroups of $\Sigma_{n}$ that
act freely and transitively on $\n$.  The collection $\Dcal$ is empty
unless $n=p^k$, in which case it consists entirely of conjugates of
the subgroup $\Delta_k$.  Certainly $\Ccal$ is closed under passage to
$p$-supergroups, and $\Dcal$ is initial in $\Ccal\cup\Dcal$.  Let
$X=\Pcal_n$, and consider the commutative diagram of $\Sigma_n$-spaces
\begin{equation}  \label{equation: fix up Pn}
\begin{CD}
X_{\Ccal\cup\Dcal}@>>>X_{\pgroupsnone} @>>> X \\\
  @VVV  @VVV   @VVV \\
(*)_{\Ccal\cup\Dcal} @>>> (*)_{\pgroupsnone} @>>> *
\end{CD}\,.
\end{equation}
By Proposition~\ref{proposition: family}, the horizontal arrows
in the right-hand square induce isomorphisms on Bredon homology and cohomology with coefficients in $\coeff$ because
$\coeff$ is projective relative to $\pgroupsof{\Sigma_n}$. We assume that  
$\coeff$ takes values in $\integers_{(p)}$-modules, so
Propositions~\ref{proposition: pruning criterion}
and~\ref{proposition: pruning proposition} imply that the horizontal arrows 
in the left-hand square induce isomorphisms in Bredon homology and cohomology. 

If $n\ne p^k$, then $\Dcal$ is empty. Meanwhile,
Proposition~\ref{proposition: bad subgroups} tells us that the left
vertical map is an equivalence on fixed point sets of subgroups in 
$\Ccal$, hence a $\Sigma_{n}$-equivalence. It is
therefore an isomorphism on Bredon homology and cohomology.
Connecting the isomorphisms around the outside of the diagram gives 
Theorem~\ref{theorem: main theorem} for $n\neq p^k$.

Suppose $n=p^k$, so that $\Dcal$ consists of conjugates of
$\Delta_{k}$. In this case, the leftmost vertical arrow in 
\eqref{equation: fix up Pn} is not a Bredon (co)homology isomorphism,
but we can still calculate it.
Lemma~\ref{lemma:fix-up-d} gives a homotopy pushout diagram 
\begin{equation}  \label{diag: htpy pushout for X}
\begin{CD} 
   X_{\Dcal} @>>> X_{\Ccal\cup\Dcal} \\
   @VVV             @VVV\\
   (*)_{\Dcal} @>>> (*)_{\Ccal\cup\Dcal}
\end{CD}\,.
\end{equation}
We can use the explicit formula of
Lemma~\ref{lemma:one-subgroup-approximation} to give a formula for
$X_{\Dcal}$ once we know $X^{\Delta_{k}}$.  As a $\Delta_{k}$-set,
$\n$ is isomorphic to $\Delta_{k}$ acting on itself by left
translation, so by Lemma~\ref{lemma:partition-of-free-action}, we find
that $X^{\Delta_{k}}\cong B_{k}$. Sticking 
\eqref{diag: htpy pushout for X} together with 
\eqref{equation: fix up Pn} and applying
Lemma~\ref{lemma:one-subgroup-approximation} to compute $X_{\Dcal}$
and $(*)_{\Dcal}$ gives us the diagram
\[
\begin{CD} 
   \Sigma_{n}\times_{\Aff_{k}}
       \left(E\GL_{k}\times B_{k}\right)
       @>>>   X_{\Ccal\cup\Dcal} @>>> X \\
      @VVV                          @VVV @VVV\\
     \Sigma_{n}\times_{\Aff_k} \left(E\GL_{k}\right)
        @>>> (*)_{\Ccal\cup\Dcal} @>>> *
\end{CD}\,.
\]
Here the left square is a homotopy pushout and the horizontal maps
are isomorphisms in Bredon (co)homology.  Taking cofibers vertically
gives us the result for~$n=p^k$.
\end{proof}

\begin{proof}[Proof of Corollary~\ref{corollary:only-one-dimension}]
  Let $G=\GL_k$. Since $M(\Sigma_n/\Delta_k)$ is an
  $\integers_{(p)}$-module, we may take $R=\integers_{(p)}[G]$ and
  $\Stein_k=\Hwiggle_{k-1}(B_k^{\diamond};\integers_{(p)})$.

  Let $Y={\Sigma_n}_{+} \wedge_{\Aff_k}\left(EG_{+}\wedge
    B_k^{\diamond}\right)$.  All of the isotropy subgroups of $Y$ are
  conjugate to $\Delta_{k}$, so the calculation of
  Lemma~\ref{lemma:one-subgroup-approximation} tells us that there are
  isomorphisms
\[
\begin{aligned}
 \BredonHomRed{*}{G}{Y}{\coeff}
       &\cong \Hwiggle_*((B_k^\diamond)_{\hobased G}; M)\\
 \BredonCohRed{*}{G}{Y}{\coeff}
       &\cong \Hwiggle^*((B_k^\diamond)_{\hobased G}; M)\,.
\end{aligned}
\]
Consider the local coefficient homology Serre spectral sequence
\[
  E^2_{i,j} =\Tor_i^{R}(\tilde H_j(B_k^\diamond;\integers_{(p)}), M)\Rightarrow
  \Hwiggle_*((B_k^\diamond)_{\hobased G}; M)\,.
\]
The spectral sequence collapses at $E^2$ to give the desired homology
calculation, because $\Hwiggle_j(B_k^\diamond;\integers_{(p)})$
vanishes for $j\ne k-1$ and is well known to give a projective
$R$-module for $j=k-1$. A similar calculation with a local coefficient
cohomology Serre spectral sequence, together with the fact that
$\Stein_k$ is self-dual, completes the proof.
\end{proof}

\section{Examples}
\label{section: our coefficients}

In this section we describe some particular Mackey functors for
$\Sigma_n$, and show that they satisfy the hypotheses of
Theorem~\ref{theorem: main theorem} and
Corollary~\ref{corollary:only-one-dimension}. The general construction
is described in Definition~\ref{defn:our-mackey-functors} and
Proposition~\ref{proposition:gamma-f-is-mackey} below.

We will consider Mackey functors with values in categories other than
abelian groups. If $\Ccal$ is any additive category, then one defines a
Mackey functor with values in $\Ccal$ to be a pair of additive
functors $(\gamma, \gamma^\natural)$ from the category of finite
$G$-sets to $\Ccal$, satisfying the same hypotheses as in
Definition~\ref{definition: Mackey functor}. In particular, we will
consider Mackey functors with values in the category of graded abelian
groups (graded Mackey functors).

\begin{remark}  \label{remark: composition of Mackey functors}
  Note that if $M$ is a Mackey functor with values in $\Ccal$, and
  $F\colon\Ccal\to \Dcal$ is an additive functor between additive
  categories, then $F\circ M$ is a Mackey functor with values in
  $\Dcal$.
\end{remark}

Our basic construction of a Mackey functor involves the homotopy
category of spectra with an action of $G$. This is unsurprising, as
the connection between Mackey functors and equivariant stable homotopy
theory is well known. We will not require the full strength of this
theory, and in particular we will only use a naive version of
equivariant stable homotopy theory.  By the {\it homotopy category of
  spectra with an action of $G$}, we mean the category of spectra with
an action of $G$, localized with respect to equivariant maps that are
weak equivalences of the underlying nonequivariant
spectra. Equivalently, this is the stable homotopy category of
$G$-spaces, or $G$-simplicial sets. This category of spectra with an
action of $G$ supports a Quillen model structure, where fibrations and
weak equivalences are defined in the underlying category of
spectra. (See, for example~\cite{Schwede} or~\cite[Theorem
11.6.1]{Hirschhorn}.) Therefore, its homotopy category is
well-defined.  Note also that the homotopy category of spectra with an
action of $G$ is an additive category.

Define the functor $\gamma$ from the category of finite $G$-sets to
the homotopy category of spectra with an action of $G$ by the formula
$\gamma(S)=\Sigma^\infty S_+$. The following is a standard fact.

\begin{lemma}\label{lem:suspension-is-mackey}
  The functor $\gamma$ extends naturally to a Mackey functor with
  values in the homotopy category of spectra with an action of $G$.
\end{lemma}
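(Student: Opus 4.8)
The plan is to build the extension from the combinatorial structure of the span category, using that $\geometric(S)=\Sigma^\infty S_+$ is a $G$-equivariantly self-dual object of the homotopy category of $G$-spectra whenever $S$ is a finite $G$-set. By the standard universal property of $\omega(G)$ (cf.\ \cite{Lindner}), an additive functor out of $\omega(G)$ into any additive category is the same datum as a covariant functor $\gamma$ and a contravariant functor $\gamma^\natural$ from finite $G$-sets, agreeing on objects, preserving finite coproducts, and carrying every pullback square of finite $G$-sets to a commuting ``Mackey square'' that relates $\gamma$ of the horizontal maps to $\gamma^\natural$ of the vertical ones; the value of the resulting functor on a span $S\xleftarrow{q}V\xrightarrow{p}T$ is then $\gamma(p)\circ\gamma^\natural(q)$. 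Since the homotopy category of $G$-spectra is additive, it suffices to produce such a pair $(\gamma,\gamma^\natural)$ for $\geometric$, with $\gamma(f)=\Sigma^\infty f_+$.

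First I would construct the transfers. For a map $g\colon V\to S$ of finite $G$-sets (which is automatically a finite covering map, being a map of finite discrete $G$-spaces) I take $\gamma^\natural(g)=\tr_g\colon\geometric(S)\to\geometric(V)$ to be the associated stable transfer; equivalently, using that the diagonal $S\to S\times S$ and the collapse $S\to\pt$ exhibit $\geometric(S)$ as self-dual, $\tr_g$ is the Spanier--Whitehead dual of $\Sigma^\infty g_+$. Concretely, on orbit summands $\tr_g$ is the evident ``sum over the fibre'' map, inserting the $s$-summand $\Sphere$ of $\geometric(S)$ diagonally into $\bigvee_{v\in g^{-1}(s)}\Sphere$. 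Contravariant functoriality $\tr_{g\circ g'}=\tr_{g'}\circ\tr_g$, the normalization $\tr_{\mathrm{id}}=\mathrm{id}$, and invariance under isomorphisms over $S$ are then immediate either from functoriality of the dual or from the explicit description.

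The main point to check, and where I expect the real work to lie, is the Mackey square condition: for a pullback square of finite $G$-sets
\[
\begin{CD}
P @>a>> V\\
@VbVV @VfVV\\
W @>g>> S
\end{CD}
\]
one must verify $\tr_g\circ\Sigma^\infty f_+=\Sigma^\infty b_+\circ\tr_a$ as morphisms $\geometric(V)\to\geometric(W)$ in the homotopy category. I would do this directly from the ``sum over the fibre'' description: both composites send the $v$-summand $\Sphere_v$ of $\geometric(V)$ diagonally into $\bigvee\Sphere_w$ with $w$ ranging over $\{w\in W:g(w)=f(v)\}$, the two index sets being identified through the canonical bijection $a^{-1}(v)\xrightarrow{\cong}\{w:g(w)=f(v)\}$, $x\mapsto b(x)$, supplied by the pullback. (Alternatively this is the classical double-coset formula for suspension spectra, and can be cited, e.g., from \cite{Lewis-May-McClure}.) Granting this, functoriality on $\omega(G)$ follows formally: for composable spans $S\leftarrow V\to T$ and $T\leftarrow W\to R$, applying the Mackey relation to the pullback of $V\to T\leftarrow W$, together with contravariant functoriality of $\tr$ and covariant functoriality of $\Sigma^\infty(-)_+$, rewrites the two-step composite as $\Sigma^\infty(V\times_T W\to R)_+\circ\tr_{(V\times_T W\to S)}$, which is the functor applied to the composite span. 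Additivity is clear because $\geometric$ takes disjoint unions to wedges and both structure maps split over wedge decompositions, and the construction visibly restricts to $S\mapsto\Sigma^\infty S_+$, $f\mapsto\Sigma^\infty f_+$ on covariant maps, so it genuinely extends $\geometric$. Since the target is an ordinary $1$-category, no higher coherences intervene; the only substantive step is the homotopy-level identity displayed above, which the explicit fibrewise description delivers.
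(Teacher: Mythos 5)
Your proposal is correct and follows essentially the same route as the paper: both define the transfer $\alpha_!$ via the $G$-equivariant Spanier--Whitehead self-duality of $\geometric(S)$ for a finite $G$-set $S$, send a span to the composite of a transfer with an induced map, and reduce functoriality to the commutation of transfers with induced maps over pullback squares of finite $G$-sets. The only difference is presentational --- you package the verification through Lindner's universal property of $\omega(G)$ and a fibrewise description of the transfer, while the paper checks functoriality on spans directly --- but the substantive input (the base-change square for transfers) is identical.
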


\begin{proof}
  The extended functor $\gamma$ is additive by construction. The
  contravariant functor $\gamma^\natural$ is necessarily defined on
  objects to be the same as~$\gamma$. To define $\gamma^\natural$ on
  morphisms, let the superscript $^\vee$ denote the Spanier-Whitehead
  dual of a spectrum.  Given a finite set $S$,
  there is a weak equivalence 
  $\Sigma^\infty S_+ \to (\Sigma^\infty S_+)^\vee$. This equivalence
  is natural with respect to set isomorphisms;
  in particular, if $S$ is a $G$-set, the equivalence is
  $G$-equivariant. Let $S\to T$ be a $G$-map between finite $G$-sets.
  We define the map $\gamma^\natural(T)\to \gamma^\natural(S)$ as
  the composite
\[
\Sigma^\infty T_+ \to (\Sigma^\infty T_+)^\vee 
                  \to (\Sigma^\infty S_+)^\vee 
                  \xleftarrow{\,\simeq\,}\Sigma^\infty S_+.
\]
Note that the ``wrong way map'' is a weak equivalence, and therefore
is invertible in the homotopy category of spectra with an action of
$G$.

Consider the diagrams below. 
That $M=(\gamma, \gamma^\natural)$ is a Mackey functor follows from
the fact that given a pullback diagram on the left of
finite $G$-sets, the diagram on the right commutes in the homotopy
category of spectra with an action of~$G$:
  \[
    \begin{CD}
        S@>u>> T\\
       @V\alpha VV    @V\beta VV\\
        U @>v>> V
     \end{CD}
\qquad\qquad\qquad
     \begin{CD}
     \Sigma^\infty S_+        @>\gamma(u) >> \Sigma^\infty T_+\\
     @A{\gamma^\natural(\alpha)}AA   @A{\gamma^\natural(\beta)}AA     \\
        \Sigma^\infty U_+ @>\gamma(v)>> \Sigma^\infty V_+
     \end{CD}\,.
\]     
\nopagebreak
\end{proof}

We say that a functor from spectra to spectra is \defining{additive} if
it respects equivalences and preserves finite coproducts up to
equivalence. Recall that $\Sigma_n$ acts on the one-point
compactification $S^n$ of $\reals^n$ by permuting coordinates, and
hence on the $j$-fold smash product $S^{nj}$.  The following is the
general construction of Mackey functors that we wish to consider.

\begin{defn}\label{defn:our-mackey-functors}
  Suppose that $j$ is a fixed integer, with $j$ odd if $p$~is odd, and
  that $F$ is an additive functor from spectra to spectra.  For each
  finite $\Sigma_n$-set $T$, define the graded abelian group
  $\coeff_F(T)$ by
  \[
       \coeff_F(T)=\pi_*F\left((\Sigma^\infty T_+\wedge S^{nj})_{\hobased
         \Sigma_n}\right)\,.
  \]
\end{defn}

Our main result in this section is
Proposition~\ref{proposition:gamma-f-is-mackey} below. Let $L_{(p)}$
denote the functor on spectra given by localization at $p$.

\begin{proposition}\label{proposition:gamma-f-is-mackey}
  The assignment $T\mapsto \coeff_F(T)$ extends naturally to a Mackey
  functor for $\Sigma_n$ that satisfies the centralizer condition
  (see Definition~\ref{definition: Mackey functor condition}). If
  $F\to F\circ L_{(p)}$ is an equivalence, then $\coeff_F$ takes
  values in $\integers_{(p)}$-modules, is projective relative to
  $p$-subgroups, and (if $p$ is odd) satisfies the involution
  condition.
\end{proposition}

We give the proof of Proposition~\ref{proposition:gamma-f-is-mackey}
after a few examples. 

\begin{example}
  If $F(X)= H\field_p\wedge X$, then $\coeff_F(T)$ is the
 $\field_p$-homology of the relevant reduced Borel construction. 
  In this
  case Proposition~\ref{proposition:gamma-f-is-mackey} and
  Theorem~\ref{theorem: main theorem}, taken together, give a relatively
  conceptual approach to the homological calculations of
  Arone-Mahowald in \cite{A-M}. 
\end{example}

\begin{example}
  One advantage of our approach is that it applies in situations in
  which explicit calculation is impossible, e.g., when $F=L_{(p)}$
  itself.  Here $\coeff_F(T)$ is the $p$-local \emph{stable homotopy}
  of the given Borel construction.  The calculation of Bredon homology
  in this case provides a key ingredient for a new proof of some
  theorems of Kuhn~\cite{Kuhn-Whitehead} and
  Kuhn-Priddy~\cite{Kuhn-Priddy} on the Whitehead conjecture (see also~\cite{Kuhn-Whitehead-final} for Kuhn's latest word on the subject).  The
  Bredon cohomology also leads to a new proof of
  the
  collapse of the homotopy spectral sequence of the Goodwillie tower
  of the identity functor evaluated at $S^1$. This was done by  Behrens~\cite{Behrens-Goodwillie-Tower} at the prime $2$ and then by Kuhn~\cite{Kuhn-Whitehead-final} at all primes.  We intend to pursue
  this in another paper.
\end{example}

\begin{example}
Another interesting example to which our results apply is the functor 
\[
F(X)=(E\wedge X)_K.
\]
Here $E$ is the Morava $E$-theory and the subscript $_K$ denotes
localization with respect to Morava $K$ theory. This example, and
others similar to it, were considered recently by Rezk~\cite{Rezk} and
Behrens~\cite{Behrens-Rezk}. It seems that our methods can be used to
recover some of their calculations. For example, Lemma 5.6
of~\cite{Behrens-Rezk} seems to be closely related to our main
theorem, applied to the functor $F$ above.
\end{example}

\begin{proof}[Proof of Proposition~\ref{proposition:gamma-f-is-mackey}]
  We saw in Lemma~\ref{lem:suspension-is-mackey} that the functor
  $T\mapsto \Sigma^\infty T_+$ extends to a Mackey functor. The
  functor $\coeff_F$ is obtained by composing the suspension spectrum
  functor with the following functors: smash product with $S^{nj}$,
  taking $\Sigma_n$-homotopy orbits, $F$, and $\pi_*$. Each
  of these functors is additive (on the level of homotopy
  categories), and therefore $\coeff_F$ extends to a Mackey functor by
  Remark~\ref{remark: composition of Mackey functors}. 

  Next we claim that the Mackey functor
  $T\mapsto L_p(\Sigma^\infty T_+ \wedge S^{nj})_{\hobased \Sigma_n}$
  (with values in the homotopy category of spectra) has the
  $p$-transfer property. By this we mean that for every
  $\Sigma_{n}$-set $Z$ of cardinality prime to $p$, the following
  composed map is an equivalence, i.e., an isomorphism in the homotopy
  category of spectra:
\begin{align*}
 L_p\left(\Sigma^\infty T_+ \wedge S^{nj}\right)_{\hobased \Sigma_n} 
    &\longrightarrow
 L_p\left(\Sigma^\infty (Z\times T)_+ \wedge S^{nj}
           \right)_{\hobased \Sigma_n} \\
    &\longrightarrow
 L_p(\Sigma^\infty T_+ \wedge S^{nj})_{\hobased \Sigma_n}.
\end{align*}
(See Definition~\ref{definition: p-constrained}.)
To see this, note that our functor is 
equivalent to 
$T\mapsto \left(L_p(\Sigma^\infty T_+) \wedge
               S^{nj}\right)_{\hobased \Sigma_n}$, 
so it is enough to prove that $L_p(\Sigma^\infty T_+)$ has the 
$p$-transfer property in the same sense: 
\begin{equation}     \label{eq: p-equivalence}
 L_p\left(\Sigma^\infty T_{+}\right)
  \longrightarrow
 L_p\left(\Sigma^\infty \left(Z\times T\right)_{+}\right)
  \longrightarrow
 L_p\left(\Sigma^\infty T_{+}\right)
\end{equation}
should be an equivalence of $\Sigma_{n}$-spectra for every
$\Sigma_{n}$-set $Z$ of cardinality prime to~$p$.  However, the effect
of the composition
$\Sigma^\infty T_+ \to \Sigma^\infty (Z\times T)_+ \to \Sigma^\infty T_+$
on homology is multiplication by $|Z|$.  Thus it induces an
isomorphism on homology with coefficients in $\integers_{(p)}$, and
\eqref{eq: p-equivalence} is an equivalence.  Since
$F\simeq F\circ L_p$, it follows that $\coeff_F$ has the $p$-transfer
property. By~Lemma~\ref{lemma:p-transfer}, $\coeff_F$ takes values in
$\integers_{(p)}$-modules, and is projective relative to
$p$-subgroups.

Next we consider the centralizer condition. To set the stage, consider
a general situation where $Z$ is a space with a pointed action
of~$\Sigma_n$.  Let $D$ be a subgroup of~$\Sigma_n$, with
centralizer~$\Cen_{\Sigma_{n}}(D)$. The orbit space of the action of $D$
on $Z$ still has an action of $\Cen_{\Sigma_{n}}(D)$, and 
there is a homeomorphism between two 
models of this $\Cen_{\Sigma_{n}}(D)$-space given by: 
\begin{align*}
\phi: Z/D &\xrightarrow{\ \cong\ }
   \left(\Sigma_n/D_+ \wedge Z\right)/{\Sigma_n}\\
Dz&\ \mapsto(eD, z). 
\end{align*}
The action of $c\in\Cen_{\Sigma_{n}}(D)$
on the domain is given by $c(Dz)=D(cz)$ and on the codomain is 
given by $c(\sigma D, z)=(\sigma c^{-1} D, z)$, 
and $\phi$ is $\Cen_{\Sigma_{n}}(D)$-equivariant with respect to 
these actions. 

We will apply the paragraph above with
$Z=(E\Sigma_{n})_{+}\wedge S^{nj}$, where $\Cen_{\Sigma_{n}}(D)$
acts on $Z$ diagonally. 
Applying the discussion in the previous paragraph gives us a 
$\Cen_{\Sigma_{n}}(D)$-equivariant homeomorphism 
\begin{equation}   \label{eq: homeomorphim}
\phi: \left[(E\Sigma_{n})_{+}\wedge S^{nj}\right]/D
  \xrightarrow{\ \cong\ } 
\left[\Sigma_{n}/D_{+}\wedge (E\Sigma_{n})_{+}\wedge S^{nj}\right]/\Sigma_{n}.
\end{equation}
The right-hand side is
$(\Sigma_n/D_+ \wedge S^{nj})_{\hobased\Sigma_n}$, and the centralizer
condition will be satisfied if we can show that the kernel of
$\Cen_{\Sigma_{n}}(D) \rightarrow\pi_{0}\Cen_{\GL_n\reals}(D)$ acts on
this space via maps that are homotopic to the identity. 

We establish what is required by using the left-hand side of
\eqref{eq: homeomorphim} instead.  Suppose
$c\in\ker\left[ \Cen_{\Sigma_{n}}(D)\rightarrow
  \pi_{0}\Cen_{\GL_n\reals}(D)\right]$.
Then the action of $c$ on $S^{nj}$ is homotopic to the identity
through $D$-equivariant maps.  Likewise, translation by $c$ on
$E\Sigma_{n}$ is homotopic to the identity, and because $c$
centralizes~$D$, the homotopy is through $D$-equivariant maps.  It
follows that the action of $c$ on
$\left[(E\Sigma_{n})_{+}\wedge S^{nj}\right]/D$ is homotopic to the
identity, and the same is true of the action on
$(\Sigma_n/D_+ \wedge S^{nj})_{\hobased\Sigma_n}$.  We conclude that
the action of $c$ on $\coeff_F\left(\Sigma_{n}/D\right)$ is trivial, and
hence $\coeff_F$ satisfies the centralizer condition.



%

It remains to prove that if $F\circ L_{(p)}\simeq F$, and $p$ is odd,
then $\coeff_F$ satisfies the involution condition.  Suppose in
general that $\tau$ is an involution of a spectrum $X$ that has $2$
invertible in~$\pi_*(X)$. 
Then $\pi_*(X)$ splits as a
direct sum of eigenspaces for $1$ and~$-1$. It follows that $\tau$
acts by $-1$ on $\pi_*(X)$ if and only if the map $\tau - 1$ is an
equivalence.  

For our situation, let $D$ and $\Cen_{\Sigma_{n}}(D)$ be as above, and
let $\tau$ be an odd involution in $\Cen_{\Sigma_{n}}(D)$. We need to
show that $\tau$ acts as multiplication by $-1$ on
$\pi_*F\left(\Sigma^\infty S^{nj}\right)_{\hobased D}$.  Since
$F\simeq F\circ L_p$, and $L_p$ commutes with homotopy colimits, it is
enough to prove that $\tau$ acts by $-1$ on
$ \pi_*F\left(L_p\left(\Sigma^\infty S^{nj}\right)_{\hobased
    D}\right)$.
However, the action of $\tau$ is induced by the
action of $\tau$ on $S^{nj}$. Since $\tau$ is an odd involution,
$\tau$ acts by a map of degree~$-1$ on $S^n$ and hence, since $j$ is
odd, by a map of degree~$-1$ on $S^{nj}$. It follows that $\tau$ acts
by $-1$ on $\pi_*L_p\left(\Sigma^\infty S^{nj}\right)$.  Since $p$ is
odd, $2$ is invertible in $\pi_*L_p\left(\Sigma^\infty S^{nj}\right)$.
By the previous paragraph, the map $\tau-1$ induces a
self-equivalence on $L_p(\Sigma^\infty S^{nj})$. 

Since $\tau$ is in
the centralizer of $D$, it acts on $L_p(\Sigma^\infty S^{nj})$ by a
$D$-equivariant map. Therefore, the map $\tau-1$ is
$D$-equivariant. It follows that $\tau-1$ induces a self-equivalence
of $L_p(\Sigma^\infty S^{nj})_{\hobased D}$. Since $F$ is an additive functor,
it follows that $\tau-1$ induces a self-equivalence of
$F(L_p(\Sigma^\infty S^{nj})_{\hobased D})$. Again, by the same reasoning as above,
it follows that $\tau$ acts by $-1$ on
$\pi_*F(L_p(\Sigma^\infty S^{nj})_{\hobased D})$, which is what we wanted to
prove.
\end{proof}

\end{document}